\newtheorem{Thm}{Theorem}[section]
\newtheorem{Lem}[Thm]{Lemma}
\newtheorem{Def}[Thm]{Definition}
\newtheorem{Cor}[Thm]{Corollary}
\newtheorem{Prop}[Thm]{Proposition}
\newtheorem{Ex1}[Thm]{Example}
\newtheorem{Rem1}[Thm]{Remark}
\newcommand{\DD}{\EuScript{D}}
\newcommand{\Ker}{\mathop{\mathrm{Ker}}\nolimits}\newcommand{\Modu}{\mathop{\mathrm{Mod}}\nolimits}
\newcommand{\op}{\mathop{\mathrm{op}}\nolimits}
\newcommand{\HH}{\mathop{\mathrm{HH}}\nolimits}
\newcommand{\Hu}{\mathop{\mathrm{H}}\nolimits}
\newcommand{\sg}{\mathop{\mathrm{sg}}\nolimits}
\newcommand{\HC}{\mathop{\mathrm{HC}}\nolimits}
\newcommand{\End}{\mathop{\mathrm{End}}\nolimits}
\newcommand{\Barr}{\mathop{\mathrm{Bar}}\nolimits}
\newcommand{\Map}{\mathop{\mathrm{Map}}\nolimits}
\newcommand{\Hom}{\mathop{\mathrm{Hom}}\nolimits}
\newcommand{\Ext}{\mathop{\mathrm{Ext}}\nolimits}
\newcommand{\Tor}{\mathop{\mathrm{Tor}}\nolimits}
\newcommand{\Tot}{\mathop{\mathrm{Tot}}\nolimits}
\newcommand{\id}{\mathop{\mathrm{id}}\nolimits}
\newenvironment{Rem}{\begin{Rem1}\rm}{\end{Rem1}}
\title[Tate-Hochschild cohomology ring]{The
Batalin-Vilkovisky structure on the Tate-Hochschild cohomology ring of
a group algebra}
\author{Yuming Liu, Zhengfang Wang and Guodong Zhou}
\address{Yuming Liu
\newline School of Mathematical Sciences
\newline Laboratory of Mathematics and Complex Systems
\newline Beijing Normal University
\newline Beijing 100875
\newline P.R.China}
\email{ymliu@bnu.edu.cn}
\address{Zhengfang Wang
\newline Beijing International Center for Mathematical Research (BICMR)
\newline Peking University
\newline Beijing 100871
\newline P.R.China}
\email{wangzhengfang@bicmr.pku.edu.cn}
\address{Guodong Zhou
 \newline School of Mathematical Sciences
 \newline Shanghai Key laboratory of PMMP
\newline East China Normal University
\newline Shanghai 200241
 \newline P.R.China}
  \email{gdzhou@math.ecnu.edu.cn}
\date{version of \today}
\newcommand{\lra}{\longrightarrow}
\newcommand{\ra}{\rightarrow}
\newcommand{\sdp}{\times\kern-.2em\vrule height1.1ex depth-.05ex}
\newcommand{\epi}{\lra \kern-.8em\ra}
\newcommand{\Z}{{\mathbb Z}}
\thanks{
}
\begin{document}

\renewcommand{\thefootnote}{\alph{footnote}}
\renewcommand{\thefootnote}{\alph{footnote}}
\setcounter{footnote}{-1} \footnote{ \emph{Mathematics Subject
Classification(2010)}: 16E40, 20J06.}
\renewcommand{\thefootnote}{\alph{footnote}}
\setcounter{footnote}{-1} \footnote{ \emph{Keywords}: Additive
decomposition; Batalin-Vilkovisky algebra; Cyclic $A_{\infty}$-algebra; Green functor;  Tate-Hochschild cohomology.}

\begin{abstract} We determine the Batalin-Vilkovisky structure on the Tate-Hochschild cohomology of the group algebra $kG$ of a finite group $G$ in terms of the additive decomposition. In particular, we show that the Tate cohomology of $G$ is a Batalin-Vilkovisky subalgebra of the Tate-Hochschild cohomology of the group algebra $kG$, and that the Tate cochain complex of $G$ is a cyclic $A_{\infty}$-subalgebra of the Tate-Hochschild cochain complex of $kG$.   
\end{abstract}

\maketitle

\section*{Introduction}
 For any associative algebra $A$, Hochschild introduced in 1945 a graded group $\HH^*(A, A)$ defined as the cohomology of certain cochain complex $C^*(A, A)$, where $C^n(A, A)$ is the space of linear maps from $A^{\otimes n}$ to $A$. In the 1960's, when studying the deformation theory of associative algebras, Gerstenhaber observed that there is a rich algebraic structure on $\HH^*(A, A)$, called a Gerstenhaber algebra, consisting of the following date: 
\begin{enumerate}[(i)]
 \item $\HH^*(A, A)$ is a graded-commutative associative algebra via the cup product;
  \item $\HH^*(A, A)$ is endowed with  a graded  Lie bracket  (nowadays called {\it Gerstenhaber bracket}) of degree $-1$  so that it becomes  a graded Lie algebra (of degree $-1$); 
 \item The Gerstenhaber bracket is compatible with the cup product via the graded Leibniz rule. 
 \end{enumerate}

 During the past few decades, a new structure (the so-called {\it Batalin-Vilkovisky structure})  has been extensively studied in topology and mathematical physics, and recently it was introduced into algebra. Roughly speaking,  a Batalin-Vilkovisky (aka. BV) structure on Hochschild cohomology  is a square-zero operator (called {\it BV-operator}) of degree $-1$ such that the Gerstenhaber bracket is the obstruction of the BV-operator being a derivation with respect to the cup product. 
A typical example of a BV-algebra was given by Tradler \cite{Tradler2008} and Menichi \cite{Menichi2004} motivated from string topology. Namely,  the Hochschild cohomology ring of a finite dimensional symmetric algebra (e.g. the  group algebra of a finite group) is a BV-algebra. 


\bigskip
From the point of view of derived categories,   the $i$-th Hochschild cohomology group $\HH^i(A, A)$ of  an  algebra $A$ over a field $k$ is isomorphic to  the space of morphisms from $A$ to $A[i]$ in $\DD^b(A\otimes_k A^{\op})$, the bounded derived category of $A$-$A$-bimodules, where $[i]$ is the $i$-th shift functor in $\DD^b(A\otimes_k A^{\op})$.  As a generalization of  the Hochschild cohomology,  the Tate-Hochschild cohomology group $\widehat{\HH}^i(A, A) \ (i\in \mathbb Z)$  is defined as the space of morphisms from $A$ to $A[i]$ in the singularity category $\DD_{\sg}(A\otimes_k A^{\op})$ of  $A$-$A$-bimodules, where $[i]$ is the $i$-th shift functor in $\DD_{\sg}(A\otimes_k A^{\op})$. Recall that $\DD_{\sg}(A\otimes_k A^{\op})$ is the Verdier quotient of the bounded derived category $\DD^b(A\otimes_k A^{\op})$ by the full subcategory consisting of bounded complexes  of  projective $A$-$A$-bimodules, which was introduced by Buchweitz \cite{Buchweitz1986} and later independently by Orlov  \cite{Orlov}.   As a particular example that will be relevant in this paper, the Tate-Hochschild cohomology $\widehat{\HH}^*(kG, kG)$ of a finite abelian group $G$  is isomorphic to $kG\otimes \widehat{\mathrm H}^*(G, k)$ as graded associative algebras, where $\widehat{\mathrm H}^*(G, k)$ is the Tate cohomology of $G$ (cf. Section \ref{subsection-Tate-cohomology}).  The notion of Tate-Hochschild  cohomology  has been  studied in the literature, such as    \cite{Buchweitz1986, BJ2011, Nguyen2012}. 

As already mentioned,  the Hochschild cohomology $\HH^*(A, A)$ may be computed by the Hochschild cochain complex $C^*(A, A)$. 
In \cite{Wang2015} the second named author    constructed a complex, the so-called  ``singular Hochschild cochain complex" to compute the Tate-Hochschild cohomology  $\widehat{\HH}^*(A, A)$. 
Via this complex, the author in loc. cit.  showed that  there is  a Gerstenhaber structure on $\widehat{\HH}^*(A, A)$  extending the classical Gerstenhaber structure on $\HH^*(A, A)$ (cf. Theorem \ref{Theorem-Wang2015} below).  Generalizing the result of Tradler and Menichi, the author proved that the Gerstenhaber structure of the  Tate-Hochschild cohomology  of a symmetric algebra extends to the  BV structure on the Hochschild cohomology (cf. Theorem \ref{Theorem-BV-algebra} below). 

In a later work \cite{RiveraWang2017}, the authors studied the Tate-Hochschild cohomology of finite dimensional  differential graded (dg) symmetric algebras, from the point of view of string topology. Generalizing the classical Tate cochain complex of a finite group (cf. Section \ref{subsection-Tate-cohomology}),  the authors constructed  an analogous complex $\mathcal D^*(A, A)$ (called {\it Tate-Hochschild cochain complex}) computing the Tate-Hochschild cohomology of a symmetric algebra $A$. The negative part of $\mathcal D^*(A, A)$ is the Hochschild chain complex $C_*(A, A)$ with $\mathcal D^{-m-1}(A, A)=C_m(A, A) \ (m\geq 0)$; the non-negative part of $\mathcal D^*(A, A)$ is the Hochschild cochain complex $C^*(A, A)$ with $\mathcal D^m(A, A)=C^m(A, A)\ (m\geq 0)$; and the differential $\tau: C_0(A, A)\rightarrow C^0(A, A)$ in degree $-1$ is given by $a\mapsto \sum_{i} e_iaf_i$, where $\sum_i e_i\otimes f_i$ is the Casimir element of $A$ (cf. Section \ref{subsection-symmetric}). It is shown in loc. cit.  that there is a cyclic $A_{\infty}$-algebra structure $(m_1=\partial, m_2, m_3, \cdots)$   and an $L_{\infty}$-algebra  structure $(l_1=\partial, l_2, l_3, \cdots)$ on $\mathcal D^*(A, A)$ such that $m_i=0$ for $i>3$. Moreover, the restrictions of $m_2$ and $l_2$ to the non-negative part $\mathcal D^{\geq 0}(A, A)$ are respectively,   the usual cup product and the Gerstenhaber bracket on $C^*(A, A)$. 



\bigskip

{\it The aim of the present article is to describe explicit complex level formulas for  the BV structure on $\widehat{\HH}^*(A,A)$ in a special case where $A=kG$ is the group algebra of a finite group $G$ over a field $k$.}

\bigskip
It is well-known that the Hochschild (co)homology  of the group algebra of a finite group admits  a decomposition as vector spaces into a direct sum of group (co)homology spaces of centralizers of elements. More precisely, let $k$ be a field and $G$ a finite group. Then we have the following additive decompositions (cf. e.g. \cite[Theorem 2.11.2]{Benson}): 
$$
\begin{array}{rcl}
\HH^*(kG,kG) &\simeq&   \bigoplus\limits_{x\in X}\mathrm H^*(C_G(x),k),\\
\HH_*(kG, kG) & \simeq & \bigoplus\limits_{x\in X}\mathrm H_*(C_G(x), k)
\end{array}
$$ where $X$ is a set of representatives of conjugacy classes of elements of $G$ and $C_G(x)$ is the centralizer of $x \in X$. Siegel and Witherspoon  in \cite{SW1999} gave a formula for the cup product of the Hochschild cohomology $\HH^*(kG,kG)$ in terms of the above  additive decomposition, and  later, Bouc in \cite{Bouc2003} gave a quick proof of this formula using Green functors. In \cite{LZ2015}, the first and the third named authors lifted the additive decomposition of $\HH^*(kG,kG)$ to the complex level. More concretely,  the authors lifted the above isomorphism on $\HH^*(kG, kG)$  to a chain homotopy equivalence given by   two maps of complexes (cf. Theorem \ref{realization-Hochschild-cohomology} below)\begin{equation} \label{equation-comparison-map}
\xymatrix@C=0.000000001pc{
&C^*(kG, kG) \ar@<0.5ex>[rrrrr]^-{\iota^*}&&&&&\bigoplus\limits_{x\in X} C^*(C_G(x), k)\ar@<0.5ex>[lllll]^-{\rho^*}}
\end{equation}
such that $\iota^*\circ \rho^*=\id$ and $\id-\rho^*\circ \iota^*$ is homotopic to the zero map, where  $C^*(kG, kG)$ denotes the Hochschild cochain complex (see Section \ref{section2}) and  $C^*(C_G(x), k)$ the group cohomology complex (see Section \ref{subsection: remainder on group cohomology}).  As a result, they described explicitly how to transfer the  cup product, the Lie bracket and the BV-operator from the left-hand side to  the right-hand side at the complex level.   In the present article, we construct an explicit homotopy $s^*$ between $\id$ and $\rho^*\circ \iota^*$, namely $\delta^*\circ s^*+s^*\circ \delta^*=\id-\rho^*\circ \iota^*$ (cf. Theorem \ref{realization-Hochschild-cohomology}). Such a triple $( \rho^*,\iota^*, s^*)$ is called a {\it homotopy deformation retract}.


Combining the above two additive decompositions,  we obtain an  additive decomposition of the Tate-Hochschild cohomology $\widehat{\HH}^*(kG,kG)$.  That is,
\begin{equation*}\label{additive-decomposition-Hochschild}
\widehat{\HH}^*(kG,kG)\simeq \bigoplus_{x\in X}\widehat{\mathrm H}^*(C_G(x),k),
\end{equation*}
where we fix $X$ to be a set of representatives of conjugacy classes of elements of $G$ and $C_G(x)$ is the centralizer subgroup of $x\in G$, and where $\widehat{\mathrm H}^*(C_G(x),k)$ is the Tate  cohomology of $C_G(x)$ (see Section \ref{subsection-Tate-cohomology}).  In \cite{Nguyen2012}, Nguyen gave the cup product formula for the Tate-Hochschild cohomology $\widehat{\HH}^*(kG,kG)$ in terms of the above additive decomposition. In the present article, we shall  describe the cup product and the BV-operator on $\widehat{\HH}^*(kG,kG)$ in terms of the additive decomposition at the complex level.  By this, we mean that we will give an explicit formula for the cup product and the BV-operator on the complex $\mathcal D^*(kG, kG)$ and also give some explicit calculations in terms of the additive decomposition at the complex level. To achieve this,  we  extend the chain homotopy equivalence (\ref{equation-comparison-map}) to the following homotopy deformation retract (cf. Remark \ref{remark-deformation-retract2}), 
\begin{equation*}\xymatrix@C=0.000000001pc{
\ar@(lu,dl)_-{\widehat s}&\mathcal D^*(kG, kG) \ar@<0.5ex>[rrrrr]^-{\widehat{\rho}}&&&&&\bigoplus\limits_{x\in X} \widehat{ C}^*(C_G(x), k),\ar@<0.5ex>[lllll]^-{\widehat{\iota}}}
\end{equation*}
namely $\widehat{\rho}\circ\widehat{\iota}=\id$ and $\id-\widehat{\iota}\circ\widehat{\rho}=\partial\circ \widehat s+\widehat s\circ \partial$, where $\partial$ is the differential of $\mathcal D^*(kG, kG)$ and $\widehat{C}^*(C_G(x), k)$ is the Tate cochain complex of $C_G(x)$ (see Section \ref{subsection-Tate-cohomology}).  Via this homotopy deformation retract, we may  transfer the cup product  on the left hand side to the right hand side and thus  obtain a cup product formula at the complex level (cf. Remark \ref{remark-cup-product-negative}), as doing so for $\HH^*(kG, kG)$ in \cite[Section 7]{LZ2015}.  The BV-operator  preserves  each summand of the additive decomposition (cf. Section \ref{section-delta-operator}). As a consequence, we obtain that  $\widehat{\mathrm H}^*(G, k)$ is a BV subalgebra of $\widehat{\HH}^*(kG, kG)$ (cf. Corollary \ref{BV-sub-algebra}). 

We have an explicit formula for the cup product $\cup$ on $\mathcal D^*(kG, kG)$ (see  Definition \ref{Def-cup-product}). We observe that the restriction of $\cup$ to the negative part  $\mathcal D^{<0}(kG, kG)$  (i.e. the Hochschild chain complex $C_*(kG, kG)$)  is in general not compatible with the differential of $C_*(kG, kG)$ (cf. Remark \ref{remark-restriction-negative}).   
For this reason,  $\cup$ is not well-defined in the whole Hochschild homology  $\HH_*(kG, kG)$.  To deal with this issue, we shall consider the following truncated subcomplex of $\mathcal D^*(kG, kG)$,
$$\widetilde{C}_{*}(kG, kG)):\quad \cdots\xrightarrow{\partial_p} C_{p-1}(kG, kG) \xrightarrow{\partial_{p-1}} \cdots \xrightarrow{\partial_2}C_1(kG, kG) \xrightarrow{\partial_1} \Ker(\tau) \rightarrow 0.$$ 
It is clear that the  cohomology of $\widetilde{C}_{*}(kG, kG)$ is isomorphic to  the negative part $\widehat{\HH}^{<0}(kG, kG)$ of $\widehat{\HH}^*(kG, kG)$. Actually, it also coincides with the stable Hochschild homology $\HH^{st}_*(kG, kG)$ studied in \cite{HHKM2005, LiuZhouZimmermann2012} (cf. Remark \ref{remark-stable-hochschild} below).  
Similarly, we denote by  $\mathrm H_*^{st}(G, k)$ the negative part of the Tate cohomology $\widehat{\mathrm H}^*(G, k)$, namely $\mathrm H^{st}_{m}(G, k)=\widehat{\mathrm H}^{-m-1}(G, k) \ (m\geq 0)$. The additive decomposition of $\widehat{\HH}^*(kG, kG)$ yields an additive  decomposition 
$$\HH_*^{st}(kG, kG)\simeq \bigoplus_{x\in X} \mathrm H^{st}_*(C_G(x), k).$$ We prove that  $\HH_{-*-1}^{st}(kG, kG)$ is a BV-algebra (without unit) (cf. Theorem \ref{theorem-stable-Hochschild}). It is  well-known that there is an isomorphism between the Hochschild homology $\HH_*(kG, kG)$ and the singular homology $\mathrm H_*(LBG, k)$ of the free loop space $LBG$ of the classifying space $BG$ (cf. \cite[7.3.13 Corollary]{Lod}). We denote by $\mathrm H^{st}_*(LBG, k)$ the subspace of $\mathrm H_*(LBG, k)$ corresponding to $\HH_*^{st}(kG, kG)$  under the above isomorphism. Then we obtain that $\mathrm H^{st}_{-*-1}(LBG, k)$, equipped with the  $S^1$-action and the product transferred from the cup product on $\HH_*^{st}(kG, kG)$,   is a BV-algebra (see Corollary \ref{cor-bv-free-loop}).

To the best of the authors' knowledge,  it is still an open question whether there is a BV$_{\infty}$-algebra structure (cf. \cite{Kau, TZ}) on $\mathcal D^*(kG, kG)$.  What we have done in the present article is only the first step toward understanding this higher algebraic structure on $\mathcal D^*(kG, kG)$  and its  behavior in terms of the additive decomposition.  By one of our results, namely that    $\widehat{\mathrm H}^*(G, k)$ is a BV subalgebra of  $\widehat{\HH}^*(kG, kG)$, it is interesting to ask whether the Tate cochain complex $\widehat{C}^*(G, k)$ is a  BV$_{\infty}$-subalgebra of $\mathcal D^*(kG, kG)$ (cf. Remark \ref{BV-infinity-algebra}). 
With this ultimate goal, we could give a partial  result in the present article. We show that  the Tate cochain  complex $\widehat{C}^*(G,k)$ is a cyclic  $A_\infty$-subalgebra of the Tate-Hochschild cochain complex $D^*(kG,kG)$ 
(cf. Theorem \ref{theorem-A-infinity}). In particular, we obtain an isomorphism of cyclic $A_{\infty}$-algebras between $\mathcal D^*(kG, kG)$ and $kG\otimes \widehat{C}^*(G, k)$ when $G$ is a finite abelian group (cf. Corollary \ref{cor-abelian-A}).  
Further problems along this direction will be explored in future research. 

\bigskip

This paper is organized as follows. In Section \ref{section-reminder-Hochschild}, we recall
some notions and algebraic structures on Hochschild (co)homology and  Tate-Hochschild cohomology. In Section \ref{section2}, we study  algebraic structures on the Tate-Hochschild cochain complex $\mathcal D^*(kG, kG)$ for a finite group $G$, including the explicit description of the cyclic  $A_{\infty}$-algebra structure. In Section \ref{section3}, we recall the notions of cohomology and Tate cohomology of finite groups,  including the Tate cochain complex $\widehat{C}^*(G, k)$ (computing the Tate cohomology).
 In Section 4, we lift explicitly the additive decomposition of the Tate-Hochschild cohomology of a group algebra to the complex level.  We also prove that the Tate cochain complex $\widehat{C}^*(G, k)$ is a cyclic $A_{\infty}$-subalgebra of $\mathcal D^*(kG, kG)$. We explain in Section 5 the cup product formula in $\widehat{\HH}^*(kG,kG)$ which appeared in \cite{Nguyen2012} and give a new proof for it  using Green functors, following Bouc. In Section 6, we deal with the BV-operator $\widehat{\Delta}$ and the Lie bracket in $\widehat{\HH}^*(kG,kG)$. In particular, we show that the operator $\widehat{\Delta}$ preserves each summand of the additive decomposition, and that $\widehat{\mathrm H}^*(G, k)$ is indeed a BV subalgebra of $\widehat{\HH}^*(kG,kG)$. In Section 7, we use our results to compute the BV structure of the Tate-Hochschild cohomology for symmetric group of degree 3 over a field $k$ of characteristic $3$. In Appendix A, we provide a proof scattered in literature of the fact that the Connes' $B$-operator is trivial in the group homology $\mathrm H_*(G,k)$.
\bigskip

\textbf{Acknowledgement}
The first named author was partially supported by NCET Program from MOE of China and by NSFC (No.11331006). The second named author was partially supported by NSFC (No.11871071) and he  would like to thank the Schools of Mathematical Sciences at the East China Normal University and the Beijing Normal University for their hospitality during his visit. The third named author was partially supported by NSFC  (No.11671139) and by STCSM (No.13dz2260400).

We are very grateful to the referees for valuable suggestions and comments, which  have  led to substantial changes and significant improvement on the presentation of this paper.

\bigskip

\section{Reminder on Hochschild (co)homology and Tate-Hochschild cohomology}\label{section-reminder-Hochschild}
Throughout this paper, we fix a field $k$.  All group algebras denoted by $kG$ or $kH$,  and all algebras denoted by $A$ in the sequel, and their modules we considers as such, will be assumed to be  finite dimensional. We shall write $\otimes$ for $\otimes_k$, the tensor product over the field $k$,  for two $k$-vector spaces $V$ and $W$, write $\mathrm{Hom}(V, W)$ for $\mathrm{Hom}_k(V, W)$,   the set of $k$-linear maps from $V$ to $W$. 

In this section we recall the definition and algebraic structure of the Tate-Hochschild cohomology of an associative $k$-algebra. For more details, we refer the reader to \cite{Wang2015,LZ2015} and the references therein.

\subsection{Hochschild (co)homology of algebras}\label{subsection-hochschild-cohomology}

Let $A$ be a finite dimensional $k$-algebra. Denote the enveloping algebra $A\otimes_k A^{\op}$ of $A$ by $A^e$.

Let us first recall the construction of the Hochschild (co)chain complexes, using the normalized bar resolution  of $A$.
Denote by $\overline{A}$ the quotient $k$-vector space $A/(k\cdot1)$.
The normalized bar resolution $(\Barr_*(A),d_*)$ of $A$ is a free
resolution of $A$ as $A$-$A$-bimodules, where
$$\Barr_{-1}(A)=A,\quad \text{ and for }n\geq 0, \quad \Barr_n(A)=A\otimes  \overline{A}^{\otimes n}  \otimes A,$$
and the differential is defined as follows:  for $n\geq 1$,
$$d_n: \Barr_n(A)\rightarrow \Barr_{n-1}(A)$$ sends $a_0\otimes  \overline{a_{1, n}} \otimes a_{n+1}$ to
$$a_0a_1\otimes \overline{a_{2, n}}\otimes a_{n+1}+\sum_{i=1}^{n-1}(-1)^ia_0\otimes \overline{a_{1, i-1}} \otimes \overline{a_ia_{i+1}}\otimes \overline{a_{i+1, n}}
  \otimes a_{n+1}+(-1)^n a_0\otimes \overline{a_{1, n-1}}\otimes a_na_{n+1},$$ and for $n=0$, 
$$d_0: \Barr_0(A)=A\otimes A\rightarrow A, \quad a_0\otimes a_1\mapsto a_0a_1.$$ 
Here for simplicity we write $\overline{a_{i, j}}:=\overline{a_{i}}\otimes \overline{a_{i+1}}\otimes \cdots \otimes \overline{a_{j}}  \ (i\leq j), $ and when $n=0$, $  \overline{A}^{\otimes n}:=k$.    This complex  is exact as there exists a contracting homotopy: for $p\geq 0$
$$s_p: \Barr_p(A)\to \Barr_p(A), a_0\otimes \overline{a_{1, p}}\otimes a_{p+1}\mapsto 1\otimes  \overline{a_{0, p}}\otimes a_{p+1}$$ and $s_{-1}: A\to \Barr_0(A), a\mapsto 1\otimes a$. Note that each $s_p\ (p\geq -1)$ is a morphism of right $A$-modules. 


Recall that the \textit{Hochschild cochain complex} $(C^*(A,A),\delta^*)$ is defined as follows:
$$C^n(A,A)=\Hom_{A^e}(\Barr_n(A),A)\simeq  \Hom( \overline{A}^{\otimes n},A), \quad \text{for }n\geq 0,$$
  and the differential is given by
$$\delta^n: C^n(A,A)\rightarrow C^{n+1}(A,A), \quad \varphi\mapsto
\delta^n(\varphi),$$  where $\delta^n(\varphi)$ sends $\overline{a_{1,n+1}}\in \overline{A}^{\otimes (n+1)}$ to
$$a_1\varphi(\overline{a_{2,n+1}})+\sum_{i=1}^{n}(-1)^i\varphi(\overline{a_{1,i-1}}\otimes \overline{a_ia_{i+1}}\otimes \overline{a_{i+2,n+1}})+(-1)^{n+1}\varphi(\overline{a_{1,n}})a_{n+1}.$$ In degree zero, the differential map $\delta^0: A\rightarrow \Hom(\overline{A},A)$ is given by
$$\delta^0(x)(\overline a)=ax-xa  \quad (\mbox{for } x\in A \mbox{ and }
\overline a\in \overline{A}).$$
For any $n\geq 0$, the $n$-th
\textit{Hochschild cohomology group} of $A$ is defined to be the cohomology group
$$\HH^n(A,A)=H^n(C^*(A,A))\simeq \Ext_{A^e}^n(A,A).$$
Recall that the \textit{Hochschild chain complex} $(C_*(A,A),\partial_*)$ is defined as follows:
$$C_n(A,A)=A\otimes_{A^e}\Barr_n(A)\simeq  A\otimes  \overline{A}^{\otimes n}, \quad \text{for }n\geq 0,$$
  and, for $n\geq 2$,  the differential $\partial_n: A\otimes  \overline{A}^{\otimes n}\rightarrow A\otimes  \overline{A}^{\otimes (n-1)}$ sends  
$a_0\otimes\overline{a_{1,n}}$ to $$ a_0a_1\otimes\overline{a_{2,n}}+\sum_{i=1}^{n-1}(-1)^i a_0\otimes\overline{a_{1,i-1}}\otimes\overline{a_ia_{i+1}}\otimes \overline{a_{i+2,n}}+(-1)^{n}a_na_0\otimes \overline{a_{1,n-1}},$$
and in  degree one, the differential   $\partial_1: A\otimes  \overline{A} \rightarrow A$ is given by
$$\partial_1(a_0\otimes \overline{a_1})=a_0a_1-a_1a_0  \quad (\mbox{for } a_0\in A \mbox{ and }
\overline{ a_1}\in \overline{A}).$$
For all $n\geq 0$, the $n$-th
\textit{Hochschild homology group} of $A$ is defined to be the homology group
$$\HH_n(A,A)=H_n(C_*(A,A))\simeq \Tor^{A^e}_n(A,A).$$

Recall that the bounded  derived category $\DD^b(A)$  is the  triangulated category obtained from the homotopy category of bounded complexes of finitely generated $A$-modules by inverting all quasi-isomorphisms.  The Hochschild cohomology groups $\HH^*(A)$ can be realized as $$\HH^n(A, A)=\Hom_{\DD^b(A^e)}(A, A[n]) \quad n\geq 0,$$
where $\DD^b(A^e)$ is the bounded derived category of $A^e$ and $[n]$ denotes the $n$-th shift functor in $\DD^b(A^e)$ (cf. e.g. \cite{Wei}). We end this subsection with a remark.

\begin{Rem} \label{Remark: contructing resolutions from bar resolution}

Let $M$ be a left $A$-module. Then $\Barr_*(A)\otimes_A M$ is a free resolution of $M$. In fact, this complex is exact with the contracting homotopy $\{s_p\otimes_A\id,\  p\geq -1\}$ since $\{s_p, \ p\geq -1\}$ are homomorphisms of right $A$-modules.

The similar result holds for right $A$-modules.

\end{Rem}
\subsection{Tate-Hochschild cohomology}\label{subsection-symmetric}
Let $A$ be a finite dimensional $k$-algebra. 
The singularity category $\DD_{\sg}(A)$ of  $A$ is defined to be the Verdier quotient $$ \DD_{\sg}(A) =\DD^b(A)/\mathrm{per}  A, $$
where
$\mathrm {per}  A$  is the   the bounded homotopy category of finitely generated projective $A$-modules.

Recall that the $i$-th ($i\in\mathbb Z$)  Tate-Hochschild cohomology group $\widehat{\HH}^i(A, A)$ of $A$ is defined as the space $\Hom_{\DD_{\sg}(A^e)}(A, A[i]),$ where $[i]$ denotes the $i$-th shift functor in $\DD_{\sg}(A^e)$. Clearly, the quotient functor from $\DD^b( A^e)$ to $\DD_{\sg}(A^e)$ induces a natural morphism
\begin{equation*}
\rho: \HH^*(A, A)\rightarrow \widehat{\HH}^*(A, A).
\end{equation*}

To compute the Tate-Hochschild cohomology $\widehat{\HH}^*(A, A)$, the second named author   constructed a complex $C_{\sg}^*(A, A)$ (called {\it singular Hochschild cochain complex})  in \cite[Section 3.1]{Wang2015}. Roughly speaking, it is  a colimit of the inductive system consisting of Hochschild cochain complexes with coefficients in the bimodules of noncommutative differential forms. On $C_{\sg}^*(A, A),$ the author  constructed a cup product $\cup$ and a Lie bracket $[\cdot, \cdot]$,   which leads to the following result. 
\begin{Thm}$($\cite[Corollary 5.3]{Wang2015}$)$\label{Theorem-Wang2015}
The Tate-Hochschild cohomology $\widehat{\HH}^*(A, A)$, equipped with the cup product $\cup$ and the Lie bracket $[\cdot, \cdot]$, is a Gerstenhaber algebra. Moreover, the above map $\rho: \HH^*(A, A)\rightarrow \widehat{\HH}^*(A, A)$  is a morphism of Gerstenhaber algebras.
\end{Thm}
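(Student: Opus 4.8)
\medskip
\noindent\emph{A proof proposal.}
The plan is to work entirely at the cochain level on the singular Hochschild cochain complex $C_{\sg}^*(A,A)$, extending the classical brace structure on $C^*(A,A)$ along the colimit that defines $C_{\sg}^*(A,A)$.

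Recall from \cite{Wang2015} that in cohomological degree $n$ the complex $C_{\sg}^*(A,A)$ is a colimit $\varinjlim_{p}\Hom(\overline{A}^{\otimes(p+n)},\,A\otimes\overline{A}^{\otimes p})$, the transition maps $\theta$ being induced by adjoining one more tensor factor of $\overline{A}$, i.e. by the suspension operator on the bimodules of noncommutative differential forms $\Omega_{\nc}^{p}(A)=A\otimes\overline{A}^{\otimes p}$. First I would write down, on a representative $\varphi$ at level $p$, explicit formulas for the cup product $\varphi\cup\psi$ (concatenate the $\overline{A}$-inputs, apply $\varphi$ and $\psi$, and multiply the two outputs inside $\bigoplus_{q}\Omega_{\nc}^{q}(A)$ using the product of noncommutative forms, with the usual Koszul sign) and for the brace operations $\varphi\{\psi_1,\dots,\psi_m\}$ (Gerstenhaber-style insertion of the $\psi_i$ into the $\overline{A}$-slots of $\varphi$, keeping track of the extra differential-form factors carried by each $\psi_i$); the circle product is $\varphi\circ\psi=\varphi\{\psi\}$ and the Lie bracket is $[\varphi,\psi]=\varphi\circ\psi-(-1)^{(|\varphi|-1)(|\psi|-1)}\psi\circ\varphi$.

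The first thing to check is that all of these operations are compatible with the transition maps $\theta$, possibly only up to an explicit homotopy, so that they descend to well-defined operations on the colimit $C_{\sg}^*(A,A)$; this is where the bookkeeping is heaviest, because the source and target bimodules change along the colimit. Granting this, I would verify the Gerstenhaber identities on $C_{\sg}^*(A,A)$: that $\delta_{\sg}$ is a derivation of $\cup$, so that $\cup$ makes $C_{\sg}^*(A,A)$ a dg algebra and hence induces an associative product on $\widehat{\HH}^*(A,A)$; that $\cup$ is graded-commutative on $\widehat{\HH}^*(A,A)$, via the classical brace-built homotopy adapted to the colimit; that the circle product satisfies the graded right pre-Lie identity, so that $[\cdot,\cdot]$ is a graded Lie bracket of degree $-1$ on cohomology; and that the homotopy-Gerstenhaber relations linking $\cup$ with the braces give the graded Leibniz (Poisson) compatibility of $[\cdot,\cdot]$ with $\cup$ on cohomology. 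In each case the method is to take the known cochain-level identity for the ordinary Hochschild complex, together with the homotopies witnessing it, and show that everything survives passage to the colimit.

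Finally, for the statement that $\rho\colon\HH^*(A,A)\to\widehat{\HH}^*(A,A)$ is a morphism of Gerstenhaber algebras: the ordinary Hochschild complex $C^*(A,A)$ is exactly the $p=0$ term of the colimit, and the resulting inclusion $C^*(A,A)\hookrightarrow C_{\sg}^*(A,A)$ is a morphism of complexes which strictly preserves $\cup$ and all the braces by construction. It then suffices to identify, via the comparison between $H^*(C_{\sg}^*(A,A))$ and $\Hom_{\DD_{\sg}(A^e)}(A,A[*])$ from \cite{Wang2015}, this inclusion on cohomology with the canonical map induced by the Verdier quotient functor $\DD^b(A^e)\to\DD_{\sg}(A^e)$; then $\rho$ automatically preserves the cup product and the bracket. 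I expect the main obstacle to be the two technical points on which everything else rests: the compatibility of the brace operations (in particular the circle product) with the transition maps of the colimit, with all Koszul signs lining up, and the verification that the homotopies witnessing graded-commutativity, the pre-Lie identity, and the Leibniz rule on the ordinary complex can be chosen compatibly with $\theta$.
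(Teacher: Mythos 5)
The paper offers no proof of this statement---it is imported verbatim from \cite[Corollary 5.3]{Wang2015}---and your outline reproduces the strategy of that reference: define the cup product and brace operations levelwise on the inductive system whose colimit is $C_{\sg}^*(A,A)$, verify the Gerstenhaber identities up to homotopy as in the classical Hochschild case, and realize $\rho$ via the inclusion of $C^*(A,A)$ as the $p=0$ term together with the identification of $H^*(C_{\sg}^*(A,A))$ with $\Hom_{\DD_{\sg}(A^e)}(A,A[*])$. The one simplification worth noting is that in Wang's construction the cup product and braces are strictly compatible with the transition maps $\theta$, so the ``up to homotopy'' bookkeeping you anticipate for descending these operations to the colimit is not actually needed; only the Gerstenhaber identities themselves hold up to homotopy.
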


In the case of a self-injective algebra $A$ over a field $k$, the Tate-Hochschild cohomology  agrees with the Tate cohomology defined in \cite{Buchweitz1986}.  We have the following descriptions of the Tate-Hochschild cohomology $\widehat{\HH}^*(A, A)$.
\begin{Prop} \label{singularHochschild-selfinjective} $($\cite[Corollary 6.4.1]{Buchweitz1986}$)$ Let A be a self-injective algebra over a field
k.  Denote $\Hom_{A^e}(A,A^e)$ by $A^{\vee}$. Then
\begin{enumerate}[(i)]
\item $\widehat{\HH}^n(A,A)\simeq
\HH^n(A,A)$ for all $n > 0$,
\item $\widehat{\HH}^n(A,A)\simeq
\HH_{-n-1}(A^{\vee},A)$ for all $n < -1$,
\item $\widehat{\HH}^0(A,A)\simeq
 \underline{\Hom}_{A^e}(A,A)$, $\widehat{\HH}^{-1}(A,A)\simeq
 \underline{\Hom}_{A^e}(A,\Omega_{A^e}(A))$, and there is an exact sequence
$$0\rightarrow \widehat{\HH}^{-1}(A,A)\rightarrow  A^{\vee}\otimes_{A^e}A\stackrel{\sigma}{\rightarrow} \Hom_{A^e}(A,A)\rightarrow \widehat{\HH}^0(A,A)\rightarrow 0,$$ where the map $\sigma$ is given by $\sigma(f\otimes a)(a') = f(a')\cdot a$ for $a,a'\in A$ and $f\in A^{\vee}$. Here $\underline{\Hom}_{A^e}(-, -)$ denotes the homomorphism space in the stable category $A^e$-$\underline{\Modu}$ and $\Omega_{A^e}$ is the syzygy functor over $A^e$-$\underline{\Modu}$. 
\end{enumerate}
\end{Prop}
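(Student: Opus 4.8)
The plan is to move from the singularity category to the stable module category of $B:=A^e$, where all four statements become elementary computations with complete resolutions and stable $\Hom$-groups. Since $A$ is finite-dimensional self-injective over $k$, so is $B=A\otimes_k A^{\op}$ (a tensor product over a field of self-injective algebras), and by Buchweitz's theorem (equivalently Rickard, Happel) the quotient functor induces a triangle equivalence $\DD_{\sg}(B)\simeq B\text{-}\underline{\Modu}$ carrying the shift $[1]$ to the cosyzygy functor $\Omega_B^{-1}$. Hence for every $n\in\Z$
$$\widehat{\HH}^n(A,A)=\Hom_{\DD_{\sg}(B)}(A,A[n])\simeq\underline{\Hom}_B(A,\Omega_B^{-n}(A))\simeq\underline{\Hom}_B(\Omega_B^{n}(A),A),$$
i.e. $\widehat{\HH}^*(A,A)$ is the Tate cohomology of $A$ regarded as a $B$-module. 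Specializing to $n=0$ and $n=-1$ and using $A[-1]\simeq\Omega_B(A)=\Omega_{A^e}(A)$ in $\DD_{\sg}(B)$ gives the two isomorphisms in (iii) immediately. Recall also that this Tate cohomology is computed by a complete resolution: an acyclic complex $\hat P_\bullet$ of finitely generated projective $B$-modules with $\widehat{\HH}^*(A,A)=H^*(\Hom_B(\hat P_\bullet,A))$, obtained by splicing a projective resolution $\cdots\to\hat P_1\to\hat P_0\to A\to 0$ with a projective coresolution $0\to A\to\hat P_{-1}\to\hat P_{-2}\to\cdots$ (which exists because $B$ self-injective forces injective $B$-modules to be projective), the splicing map being $\hat P_0\twoheadrightarrow A\hookrightarrow\hat P_{-1}$.

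For (i), with $n\geq1$, I would use dimension shifting: from $0\to\Omega_B(X)\to P\to X\to 0$ with $P$ projective, and the fact that $B$ is injective over itself (so $\Ext^{\geq1}_B(-,P')=0$ for every projective $P'$), applying $\Hom_B(-,N)$ gives a natural isomorphism $\Ext^1_B(X,N)\simeq\underline{\Hom}_B(\Omega_B X,N)$; iterating yields $\Ext^n_B(A,A)\simeq\underline{\Hom}_B(\Omega_B^n A,A)\simeq\widehat{\HH}^n(A,A)$ for all $n\geq1$. Thus $\widehat{\HH}^n(A,A)\simeq\HH^n(A,A)$ for $n>0$.

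For (ii) I would analyze the negative part of the complete resolution $\hat P_\bullet$. Applying the duality $\Hom_B(-,B)$ to the coresolution (this functor is exact here, since $\Ext^{\geq1}_B(-,B)=0$, and it restricts to a duality between finitely generated projective left and right $B$-modules) produces a projective resolution $Q_\bullet\to A^\vee$ with $Q_j\simeq\Hom_B(\hat P_{-1-j},B)$, whence $\hat P_{-1-j}\simeq\Hom_B(Q_j,B)$ and, for $Q_j$ finitely generated projective, $\Hom_B(\hat P_{-1-j},A)\simeq Q_j\otimes_B A$ naturally in $j$. Therefore in cohomological degrees $\leq-1$ the complex $\Hom_B(\hat P_\bullet,A)$ is isomorphic to $Q_\bullet\otimes_B A$, and for $j\geq1$ (where the splice in degree $-1$ contributes nothing, since $\widehat{\HH}^{-1-j}$ is read off from $\hat P_{-j},\hat P_{-1-j},\hat P_{-2-j}$, all in the coresolution part) one gets
$$\widehat{\HH}^{-1-j}(A,A)\simeq H_j(Q_\bullet\otimes_B A)=\Tor^B_j(A^\vee,A)=\HH_j(A^\vee,A),$$
which is exactly (ii). For the four-term sequence I would unravel $\Hom_B(\hat P_\bullet,A)$ near degrees $-1,0$: a map $\hat P_0\to A$ is annihilated by $d^0$ precisely when it factors through $\hat P_0\twoheadrightarrow A$, so $\ker d^0\simeq\Hom_B(A,A)$; and $\Hom_B(\hat P_{-1},A)/\mathrm{im}\,d^{-2}\simeq\mathrm{coker}(Q_1\to Q_0)\otimes_B A=A^\vee\otimes_B A$. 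Tracing the adjunction defining $A\hookrightarrow\hat P_{-1}$ through the identification $\Hom_B(\Hom_B(Q_0,B),A)\simeq Q_0\otimes_B A$ shows that the induced differential between these two is the evaluation map $\sigma(f\otimes a)(a')=f(a')\cdot a$; since $\hat P_\bullet$ is acyclic, $\ker\sigma=\ker d^{-1}/\mathrm{im}\,d^{-2}=\widehat{\HH}^{-1}(A,A)$ and $\mathrm{coker}\,\sigma=\ker d^0/\mathrm{im}\,d^{-1}=\widehat{\HH}^0(A,A)$, giving the exact sequence $0\to\widehat{\HH}^{-1}(A,A)\to A^\vee\otimes_B A\stackrel{\sigma}{\rightarrow}\Hom_B(A,A)\to\widehat{\HH}^0(A,A)\to0$.

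I expect the main obstacle to be this last identification: keeping the left/right $A^e$-module bookkeeping consistent through $\Hom_B(\Hom_B(Q_j,B),A)\simeq Q_j\otimes_B A$ and through the splice $\hat P_0\twoheadrightarrow A\hookrightarrow\hat P_{-1}$, so as to pin down the connecting map as the \emph{specific} map $\sigma$ with the stated formula rather than merely as some natural transformation. The conceptual input driving everything --- and the only genuine use of self-injectivity beyond the singularity-category/stable-category equivalence --- is that $\Hom_B(-,B)$ is exact and dualizes finitely generated projectives, which is what lets the negative part of the complete resolution be recognized as the $B$-dual of a projective resolution of $A^\vee$; once this is in hand the rest is routine homological algebra.
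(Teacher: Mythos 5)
Your argument is correct and is essentially the standard one: the paper itself offers no proof of this Proposition, citing Buchweitz's Corollary 6.4.1, and your route (the equivalence $\DD_{\sg}(A^e)\simeq A^e\text{-}\underline{\Modu}$ for the self-injective algebra $A^e$, dimension shifting for $n>0$, and reading the negative part of a complete resolution as the $A^e$-dual of a projective resolution of $A^{\vee}$ to get $\Tor^{A^e}_*(A^{\vee},A)$ and the four-term sequence with the evaluation map $\sigma$) is precisely the argument in the cited source. The only point to keep straight, which you already flag, is the left/right bimodule bookkeeping in the identification $\Hom_{A^e}(\Hom_{A^e}(Q_j,A^e),A)\simeq Q_j\otimes_{A^e}A$ so that the connecting map is the specific $\sigma(f\otimes a)(a')=f(a')\cdot a$.
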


Now we specialize $A$ to be a symmetric algebra. Symmetric algebras are self-injective and include group algebras of finite groups. Recall that a symmetric algebra is a finite dimensional $k$-algebra $A$ such that
there is a symmetric non-degenerate associative bilinear form $\langle\cdot,
\cdot\rangle: A\times A\rightarrow k$ (called the symmetrizing form), or equivalently, $A\simeq
A^*=\mathrm{\Hom}_k(A, k)$ as $A$-$A$-bimodules. Note that we can choose an $A$-$A$-bimodule isomorphism (denoted by $t$) as follows: $t(a)=\langle a,\cdot\rangle$ for $a\in A$.
This isomorphism $t$ induces the following isomorphism
$$t\otimes \id: A\otimes_kA\rightarrow A^*\otimes_kA\simeq \End_k(A)$$
$$\quad \quad \quad \quad a\otimes b \mapsto t(a)\otimes b \mapsto (x\mapsto t(a)(x)b).$$
Following Brou\'{e} (see \cite{Broue2009}), we call the element $(t\otimes \id)^{-1}(\id):=
\sum_ie_i\otimes f_i \in A\otimes_kA$  the Casimir element of $A$. It follows
from \cite[Proposition 3.3]{Broue2009} that  the Casimir element induces an isomorphism $$A\simeq A^{\vee}=\Hom_{A^e}(A,A^e), \quad a \mapsto \sum_i e_ia\otimes f_i$$ as $A$-$A$-bimodules, where we identify $\Hom_{A^e}(A, A^e)$ as 
$$(A\otimes A)^A:=\{\sum_i a_i\otimes b_i \in A\otimes_k A| \sum_{i} aa_i\otimes b_i=\sum_i a_i\otimes b_ia \ \mbox{for any $a\in A$}\}.$$ Hence, if $A$ is a symmetric algebra and $n < -1$, then, by Proposition \ref{singularHochschild-selfinjective} (ii), the Tate-Hochschild cohomology $\widehat{\HH}^n(A,A)$ is isomorphic to the usual Hochschild homology:
$$\widehat{\HH}^n(A,A)\simeq
\Tor_{-n-1}^{A^e}(A^{\vee},A)\simeq
\Tor_{-n-1}^{A^e}(A,A)=\HH_{-n-1}(A,A).$$
Moreover, for $n=-1, 0$ we have the following interesting observations.
\begin{Rem}\label{remark-stable-hochschild}
By Proposition \ref{singularHochschild-selfinjective} (iii), the $0$-th Tate-Hochschild cohomology $\widehat{\HH}^0(A,A)$  is a quotient of the $0$-th Hochschild cohomology $\HH^0(A,A)$ and coincides with the stable center $Z^{st}(A)=Z(A)/Z^{pr}(A)$ (cf. \cite[Section 2]{LiuZhouZimmermann2012}). Moreover, the map $\sigma: A^{\vee}\otimes_{A^e}A\rightarrow  \Hom_{A^e}(A,A)$ in Proposition \ref{singularHochschild-selfinjective} (iii) is identified with the trace map $$\tau: \HH_0(A,A)=A/[A,A]\rightarrow  \HH^0(A,A)=Z(A), \quad  a+[A,A]\mapsto \sum_ie_iaf_i,$$ where $\Ker(\tau)={Z^{pr}(A)}^\perp/[A,A]$ is equal to the so-called $0$-th stable Hochschild homology $\HH_0^{st}(A)$ (cf. \cite[Section 4]{HHKM2005}, \cite[Section 2 and 3]{LiuZhouZimmermann2012}). Thus, in this case, the $-1$-th Tate-Hochschild cohomology $\widehat{\HH}^{-1}(A,A)$ is a subspace of the $0$-th Hochschild homology $\HH_0(A,A)$ and coincides with the $0$-th stable Hochschild homology $\HH^{st}_0(A)$ (cf. \cite[Section 4]{HHKM2005}, \cite[Section 2 and 3]{LiuZhouZimmermann2012}).
\end{Rem}

Therefore,  $\widehat{\HH}^*(A, A)$ is a ``combination" of the Hochschild cohomology $\HH^*(A, A)$ and the Hochschild homology $\HH_*(A, A)$.
We can summarize the above results by means of the following diagram:
$$\xymatrix@R=1.2pc{
& & & & \HH^0\ar@{->>}[d] & \HH^1\ar@<-0.5ex>@{=}[d] & \HH^2\ar@<-0.5ex>@{=}[d]& \cdots\\
\cdots & \widehat{\HH}^{-3}\ar@<-1ex>@{=}[d]& \widehat{\HH}^{-2}\ar@<-1ex>@{=}[d] & \widehat{\HH}^{-1}\ar@<-0.5ex>@{_{(}->}[d] & \widehat{\HH}^{0} & \widehat{\HH}^{1} & \widehat{\HH}^{2} & \cdots \\
\cdots & \HH_{2} & \HH_{1} & \HH_{0}\ar@{->}[uur]_{\tau} & & & &
}$$

In \cite[Section 6.4]{Wang2015}, the author constructed a complex (called {\it Tate-Hochschild cochain complex})
$$\mathcal D^*(A, A):= (\cdots \xrightarrow{\partial_2} C_1(A, A)\xrightarrow{\partial_1} C_0(A, A) \xrightarrow{\tau} C^0(A, A)\xrightarrow{\delta^0} C^1(A, A) \xrightarrow{\delta^1}\cdots),$$
  to compute $\widehat{\HH}^*(A, A)$ for a symmetric algebra $A$, where $\partial_*$ (resp. $\delta^*$) is the  differential of $C_*(A, A)$ (resp.  $C^*(A, A)$) (see Section \ref{subsection-hochschild-cohomology}); and $\tau(x)=\sum_{i} e_ixf_i$. Here $\sum_i(e_i\otimes f_i)$ is the Casimir element. Note that the  bilinear form  $\langle\cdot, \cdot\rangle$ on $A$  defines a non-degenerate bilinear form (still denoted by $\langle\cdot, \cdot\rangle$)
$$\langle\cdot,\cdot\rangle: \mathcal{D}^{*}(A,A)\times \mathcal{D}^{*}(A,A)\rightarrow k$$ on $\mathcal D^*(A, A):$
For $\alpha\in C^m(A, A)$ and $\beta=a_0\otimes \overline{a_{1, n}}  \in C_{n}(A, A),$
we define
$$\langle \beta, \alpha\rangle=\langle \alpha, \beta\rangle:=
\begin{cases}
\langle \alpha(\overline{a_{1, n}}), a_{0}\rangle & \mbox{if $m=n$},\\
0 & \mbox{otherwise}.
\end{cases}
$$

\begin{Rem}\label{duality-in-Tate-Hochschild-cohomology} In fact,  this bilinear form $\langle\cdot,\cdot\rangle$ is induced by  the duality between $C_*(A, A)$ and $C^*(A, A)$ defined in  \cite[Lemma 2.9]{KoenigLiuZhou}. 
Note that  $\langle\cdot, \cdot\rangle$ descends to $\widehat{\HH}^*(A, A)$ since it is compatible with the differential of $\mathcal D^*(A, A)$ (cf. Lemma \ref{lemma-pairing-compatibility} below).    
In particular, we have a non-degenerate
bilinear form between $\widehat{\HH}^0(A,A)\simeq
 Z^{st}(A)$ and $\widehat{\HH}^{-1}(A,A)\simeq
 \HH_0^{st}(A)$ (cf. \cite[Theorem 2.15 (3)]{KoenigLiuZhou}).
\end{Rem}

The following result shows that $\mathcal D^*(A, A)$ has  a  rich algebraic structure. 
\begin{Thm}\label{theorem-A-infinity-algebra}$($\cite[Theorem 6.3 and Proposition 6.5]{RiveraWang2017}$)$ Let $A$ be a symmetric $k$-algebra. Then there is a cyclic $A_{\infty}$-algebra structure $(m_1=\partial, m_2, m_3, \cdots)$   and an $L_{\infty}$-algebra  structure $(l_1=\partial, l_2, l_3, \cdots)$ on $\mathcal D^*(A, A)$ such that $m_i=0$ for $i>3$, where the restrictions of $m_2$ and $l_2$ to the nonnegative part $\mathcal D^{\geq 0}(A, A)$ are respectively,   the usual cup product and Gerstenhaber bracket on $C^*(A, A)$. \end{Thm}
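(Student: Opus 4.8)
The plan is to build every operation on an explicit acyclic complex of projective bimodules and then apply $\Hom_{A^e}(-,A)$. Since $A$ is symmetric, so is $A^e=A\otimes A^{\op}$, hence projective and injective $A^e$-modules coincide; splicing the normalized bar resolution $\Barr_*(A)\twoheadrightarrow A$ with the injective coresolution obtained by dualizing it via $\Hom_{A^e}(-,A^e)$ (using $A\simeq A^{\vee}$) produces a \emph{complete bar resolution} $\widehat{\Barr}_*(A)$ with $\widehat{\Barr}_m(A)=\Barr_m(A)$ for $m\ge 0$, $\widehat{\Barr}_{-m-1}(A)\cong\Barr_m(A)^{\vee}$ for $m\ge 0$, and the two halves glued in degrees $0,-1$ by the composite $\Barr_0(A)\twoheadrightarrow A\xrightarrow{\sim}A^{\vee}\hookrightarrow\Barr_0(A)^{\vee}$. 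First I would verify that $\Hom_{A^e}(\widehat{\Barr}_*(A),A)\simeq\mathcal D^*(A,A)$: on the non-negative part this is the defining isomorphism $\Hom_{A^e}(\Barr_m(A),A)\simeq\Hom(\overline A^{\otimes m},A)$; on the negative part the isomorphism $\Hom_{A^e}(\Barr_m(A)^{\vee},A)\simeq A\otimes\overline A^{\otimes m}=C_m(A,A)$, valid since $\overline A^{\otimes m}$ is finite-dimensional; and in degree $-1$ the gluing map becomes precisely the trace $\tau(x)=\sum_i e_i x f_i$. Since $\widehat{\Barr}_*(A)$ is self-dual under $\Hom_{A^e}(-,A^e)$ (again using $A\simeq A^{\vee}$), the pairing induced on $\Hom_{A^e}(\widehat{\Barr}_*(A),A)$ is the bilinear form $\langle\cdot,\cdot\rangle$ of the statement, whose compatibility with $\partial$ is the cited Lemma \ref{lemma-pairing-compatibility}.

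Next I would construct the multiplication by a diagonal approximation $\widehat{\Delta}\colon\widehat{\Barr}_*(A)\to\widehat{\Barr}_*(A)\otimes_A\widehat{\Barr}_*(A)$ extending the Alexander--Whitney diagonal of $\Barr_*(A)$, and set $m_2=\Hom_{A^e}(\widehat{\Delta},A)$. On the non-negative part $\widehat{\Delta}$ is strictly coassociative and recovers the honest dg algebra $(C^*(A,A),\delta,\cup)$, so $m_2|_{\mathcal D^{\ge 0}}=\cup$ and $m_i|_{\mathcal D^{\ge 0}}=0$ for $i\ge 3$; the remaining components of $m_2$ are the cap actions of cochains on chains together with a Casimir-splitting product $\mathcal D^{<0}\otimes\mathcal D^{<0}\to\mathcal D^{<0}$, all determined by $\widehat{\Delta}$. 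The coassociativity defect of $\widehat{\Delta}$ is concentrated at the gluing across degrees $0$ and $-1$ and is a coboundary; a choice of primitive gives $m_3$, an operation supported on triples with at least one negative-degree input and built from a single insertion of the Casimir $\sum_i e_i\otimes f_i$. I would then verify the Stasheff identities $\sum\pm\,m_i(\id^{\otimes a}\otimes m_j\otimes\id^{\otimes c})=0$: a single Casimir insertion has length two, so no composite of $\partial,m_2,m_3$ can produce the configuration feeding $m_3$ into $m_3$ or forcing an $m_4$, and the relations for $n\ge 4$ reduce to the ones among $\partial,m_2,m_3$ already checked --- whence $m_i=0$ for $i>3$.

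It remains to establish cyclicity and the $L_\infty$-structure. Cyclicity of $(m_1=\partial,m_2,m_3)$ against $\langle\cdot,\cdot\rangle$ I would check arity by arity: for $m_1$ it is Lemma \ref{lemma-pairing-compatibility}; for $m_2$ it reduces to the graded-commutativity and associativity of $\cup$ together with the adjunction between the cup product and the cap action under the symmetrizing form; for $m_3$ it reduces to the symmetry of the Casimir, $\sum_i e_i\otimes f_i=\sum_i f_i\otimes e_i$. For the Lie structure, note that $l_2|_{\mathcal D^{\ge 0}}$ must be the Gerstenhaber bracket, which is \emph{not} recovered from $\cup$; so rather than extracting $l_2$ from $m_2$ I would transport the bracket and its higher homotopies from the singular Hochschild cochain complex $C_{\sg}^*(A,A)$ of \cite{Wang2015} --- which carries a compatible Lie bracket and is a model for $\mathcal D^*(A,A)$ --- or, equivalently, install brace operations directly on $\widehat{\Barr}_*(A)$; in either case the brace operations restrict to the classical ones on the non-negative part, so $l_2|_{\mathcal D^{\ge 0}}$ is the Gerstenhaber bracket.

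The main obstacle is the second paragraph: pinning down the explicit formula for $m_3$ in terms of the Casimir with the correct Koszul signs across the chain/cochain interface, and then running the finitely many Stasheff checks (arities $4$ and $5$) and the vanishing $m_i=0$ for $i>3$. This is elementary but sign-delicate, and it is also the point at which one must ensure that $m_2$ is a chain map for the \emph{total} differential $\partial$ of $\mathcal D^*(A,A)$ --- in particular that $\widehat{\Delta}$ interacts correctly with the Casimir gluing $\tau$ --- and not merely on the two halves separately.
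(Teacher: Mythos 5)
First, a point of reference: the paper does not prove this statement at all --- it is quoted from \cite{RiveraWang2017}, and what the present paper supplies (in Section \ref{section2}, for $A=kG$) are the explicit formulas for $m_2$ and $m_3$ built from the Casimir element, whose direct, formula-level verification of the Stasheff and cyclicity identities is the route taken in the cited source. Your first paragraph is fine and is a reasonable repackaging: splicing $\Barr_*(A)$ with its $\Hom_{A^e}(-,A^e)$-dual does give a complete resolution, $\Hom_{A^e}(\widehat{\Barr}_*(A),A)$ is identified with $\mathcal D^*(A,A)$ with the gluing map becoming $\tau$, and self-duality recovers the pairing.

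The genuine gaps are in your second and third paragraphs, which is exactly where the content of the theorem lies. (i) Abstract obstruction theory applied to a homotopy-coassociative diagonal $\widehat{\Delta}$ only produces \emph{some} $A_\infty$-structure, with potentially nonzero $m_i$ for every $i$; the assertion $m_i=0$ for $i>3$ \emph{is} the theorem and cannot be extracted from ``a choice of primitive'' --- it requires exhibiting the specific $m_3$ (the single-Casimir-insertion formulas recorded in Section \ref{section2}) and then checking the arity-$4$ and arity-$5$ Stasheff identities with it. (ii) Your heuristic for arity $5$ is not valid: with $m_4=m_5=0$ that identity reduces to $\sum_{r+t=2}\pm\, m_3(\id^{\otimes r}\otimes m_3\otimes \id^{\otimes t})=0$, and these configurations do occur --- for instance $m_3(\phi,\alpha,\varphi)$ is a cochain and can legitimately sit as the middle argument of another $m_3$ with two chain inputs --- so one must actually verify the signed cancellation (or individual vanishing) of these composites; nothing about ``a single Casimir insertion has length two'' rules them out. (iii) Even the existence of $\widehat{\Delta}$ needs an argument: the tensor product over $A$ of two complete resolutions is not obviously acyclic, so extending the Alexander--Whitney diagonal across degrees $0$ and $-1$ requires a construction in the style of the cup product on Tate cohomology of groups, not a bare ``projective plus acyclic'' comparison. (iv) For the $L_\infty$-part, homotopy transfer from $C^*_{\sg}(A,A)$ yields some $L_\infty$-structure, but your claims that $l_1=\partial$ on the nose and that $l_2$ restricts at the complex level to the Gerstenhaber bracket on $\mathcal D^{\geq 0}(A,A)$ need the transfer data (quasi-isomorphism and homotopy) to be compatible with the non-negative truncation; this is asserted, not established. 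In short, the plan is plausible, but the steps you defer as ``elementary but sign-delicate'' are precisely the substance of the cited theorem, and the one argument you do offer for them would fail as stated.
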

\begin{Rem}
We have simple and explicit formulas for the $A_{\infty}$-products since $m_i=0$ for $i>3$. But the formulas for the $L_{\infty}$-brackets $l_i$  are in general very complicated and messy.  In Section \ref{section2}, we write down the explicit formulas for the $A_{\infty}$-products $m_i$ on $\mathcal D^*(kG, kG)$.  In Theorem \ref{theorem-A-infinity} below, we prove that   the Tate cochain complex  $\widehat{C}^*(G, k)$  is a cyclic $A_{\infty}$-subalgebra of $\mathcal D^*(kG, kG)$. \end{Rem}

Recall that the Connes' $B$-operator on the Hochschild chain complex $C_*(A, A)$ is defined as
\begin{equation*}
B(a_0\otimes  \overline{a}_{1,m})=\sum^{m}_{i=0} (-1)^{mi}1\otimes  \overline{a_{i, m}}\otimes  \overline{a_0}\otimes  \overline{a_{1, i-1}}.
\end{equation*}
Tradler in \cite{Tradler2008} and Menichi \cite{Menichi2004} showed  that the Hochschild cohomology $\HH^*(A,A)$ of a symmetric algebra $A$ is a  BV-algebra whose BV-operator $\Delta$ is the dual of the Connes' $B$-operator with respect to the bilinear form $\langle\cdot,\cdot\rangle$. That is,
$$\langle \Delta(f)( \overline{a_{1, m}}), a_0\rangle=\langle B(a_0\otimes  \overline{a_{1, m}}), f\rangle.$$

Generalizing the above  result, we have the following result. 
\begin{Thm}\label{Theorem-BV-algebra}$($\cite[Theorem 6.17]{Wang2015}\cite[Corollary 6.7]{RiveraWang2017}$)$
Let $A$ be a symmetric $k$-algebra.  Then the Gerstenhaber algebra $(\widehat{\HH}^*(A, A), \cup, [\cdot, \cdot])$ is a BV-algebra whose BV-operator $\widehat{\Delta}$ is given by
\begin{equation*}
\widehat{\Delta}^i:=
\begin{cases}
\Delta^i  &  \mbox{for $i> 0$,}\\
0 & \mbox{for $i=0$,}\\
B_{-i-1} & \mbox{for $i\leq -1$.}
\end{cases}
\end{equation*}\end{Thm}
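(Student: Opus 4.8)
The plan is to verify the three conditions defining a BV structure --- that $\widehat{\Delta}$ descends to an operator of cohomological degree $-1$ on $\widehat{\HH}^*(A,A)$, that $\widehat{\Delta}\circ\widehat{\Delta}=0$, and that the Gerstenhaber bracket of Theorem~\ref{Theorem-Wang2015} measures the failure of $\widehat{\Delta}$ to be a derivation of the cup product --- entirely at the level of the Tate-Hochschild cochain complex $\mathcal{D}^*(A,A)$ (equivalently of the singular Hochschild cochain complex $C^*_{\sg}(A,A)$), and then to pass to cohomology. Accordingly one first promotes $\widehat{\Delta}$ to a degree $-1$ operator on $\mathcal{D}^*(A,A)$ itself by the very formula of the statement: Connes' $B$-operator on the negative part $\mathcal{D}^{<0}(A,A)=C_*(A,A)$, the Tradler--Menichi operator $\Delta$ (the $\langle\cdot,\cdot\rangle$-transpose of $B$) on the nonnegative part $\mathcal{D}^{\geq 0}(A,A)=C^*(A,A)$, and $0$ in cohomological degree $0$.

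The first task is to check that $\widehat{\Delta}$ anticommutes with the total differential $\partial^{\mathcal{D}}$ of $\mathcal{D}^*(A,A)$ and squares to zero; this splits into three regimes. In strictly negative degrees these are the classical identities $\partial B+B\partial=0$ and $B^2=0$ on the Hochschild chain complex. In strictly positive degrees they are obtained by transposing along the non-degenerate pairing $\langle\cdot,\cdot\rangle$: since $\langle\cdot,\cdot\rangle$ is compatible with $\partial^{\mathcal{D}}$ (Lemma~\ref{lemma-pairing-compatibility}) and $\Delta$ is by definition the transpose of $B$, the relations $\delta\Delta+\Delta\delta=0$ and $\Delta^2=0$ on $C^*(A,A)$ are precisely the transposes of the negative-degree ones. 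In the junction degrees $-1,0,1$ the only new inputs are the identity $\partial_1\circ B_0=0$ on $C_0(A,A)$ --- immediate because $\partial_1(1\otimes\overline{a_0})=a_0-a_0=0$ --- together with its $\langle\cdot,\cdot\rangle$-transpose $\Delta^1\circ\delta^0=0$ on $C^0(A,A)$, and the fact that $\widehat{\Delta}^0=0$, which makes every composite crossing degree $0$ vanish for free. Hence $\widehat{\Delta}$ is a well-defined square-zero operator of degree $-1$ on $\widehat{\HH}^*(A,A)$.

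The heart of the matter is the BV identity, which in Getzler's normalization reads
$$(-1)^{|\alpha|}[\alpha,\beta]=\widehat{\Delta}(\alpha\cup\beta)-\widehat{\Delta}(\alpha)\cup\beta-(-1)^{|\alpha|}\alpha\cup\widehat{\Delta}(\beta)$$
in $\widehat{\HH}^*(A,A)$. I would prove it at the cochain level of $\mathcal{D}^*(A,A)$ by exhibiting an explicit bilinear operation $\widehat{\mathcal{B}}$ of degree $-2$ whose graded commutator with $\partial^{\mathcal{D}}$ equals the difference of the two sides --- an honest homotopy realizing the BV identity --- extending the cochain homotopy used by Tradler and Menichi for $\HH^*(A,A)$. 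On the nonnegative part $\mathcal{D}^{\geq 0}(A,A)=C^*(A,A)$ this identity is exactly the classical Tradler--Menichi computation for the symmetric algebra $A$. The new content is the negative-degree and mixed-degree components: here one spells out the cup product on $\mathcal{D}^*(A,A)$ (Definition~\ref{Def-cup-product}), the bracket furnished by Theorem~\ref{Theorem-Wang2015}, and the operators $B$, $\tau$ and the Casimir element, and verifies the relation by a long but mechanical bookkeeping of the brace-type formulas, in the spirit of the chain-level lift of the Siegel--Witherspoon formula in \cite{LZ2015}. A structurally cleaner alternative is to package everything through the cyclic $A_{\infty}$-structure $(m_1=\partial,m_2,m_3)$ on $\mathcal{D}^*(A,A)$ of Theorem~\ref{theorem-A-infinity-algebra}: a cyclic $A_{\infty}$-algebra carries the algebraic incarnation of the Chas--Sullivan string-topology operations, and the Connes-type operator $\widehat{\Delta}$ together with the $m_2$-induced cup product then satisfies the BV identity by the general formalism --- this is the route of \cite{RiveraWang2017}.

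I expect the main obstacle to be the mixed-degree instances of the BV identity, where $\alpha$, $\beta$, or the three terms $\widehat{\Delta}(\alpha\cup\beta)$, $\widehat{\Delta}(\alpha)\cup\beta$, $\alpha\cup\widehat{\Delta}(\beta)$ have components on both sides of the degree-$0$ wall. Several features conspire to make this delicate: the cup product on $\mathcal{D}^*(A,A)$ couples the Hochschild cochain and chain parts through the map $\tau$ and the Casimir element; its restriction to the negative part is not even compatible with the differential there (Remark~\ref{remark-restriction-negative}); and the asymmetry $\widehat{\Delta}^0=0$ breaks the naive duality between the two halves. Consequently the homotopy $\widehat{\mathcal{B}}$ must be chosen to absorb exactly these boundary corrections, and the sign bookkeeping across the wall needs care. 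Once the cochain-level identity is in hand, passing to cohomology gives the theorem, the graded-commutativity of $\cup$ and the graded Jacobi and Leibniz identities being already part of Theorem~\ref{Theorem-Wang2015}.
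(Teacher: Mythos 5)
The first thing to observe is that the paper contains no proof of this statement: Theorem \ref{Theorem-BV-algebra} is recalled verbatim from \cite[Theorem 6.17]{Wang2015} and \cite[Corollary 6.7]{RiveraWang2017}, so the only argument the paper offers is the citation. Measured against those sources, your outline is pointed in the right direction, and the routine parts are correct: in negative degrees one has the classical identities for Connes' $B$, in positive degrees their transposes under the pairing (using its compatibility with the differential, Lemma \ref{lemma-pairing-compatibility}), and at the junction the two extra identities $\partial_1\circ B_0=0$ and $\Delta^1\circ\delta^0=0$, together with $\widehat{\Delta}^0=0$, do make $\widehat{\Delta}$ a square-zero chain map on $\mathcal D^*(A,A)$.

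As a proof, however, your proposal has a genuine gap exactly at the step you call the heart of the matter. The homotopy $\widehat{\mathcal B}$ that is to realize the BV identity in mixed degrees is never constructed, and constructing it is not ``mechanical bookkeeping'': the cup product of Definition \ref{Def-cup-product} couples the two halves through the Casimir element, is only associative up to the homotopy $m_3$, and fails to respect the differential on the negative half (Remark \ref{remark-restriction-negative}); these are precisely the corrections the homotopy must absorb, and no formula or existence argument is given. The fallback you offer is not a formality either: a cyclic $A_\infty$-structure on a complex does not, by general nonsense, equip the cohomology of that complex with a BV operator equal to a prescribed Connes-type $\widehat{\Delta}$ (the cyclic Deligne-type results of Tradler, Menichi and Kaufmann produce BV structures on the Hochschild cohomology \emph{of} a cyclic $A_\infty$-algebra, a different object); the passage from the cyclic $A_\infty$-structure on $\mathcal D^*(A,A)$ to the stated BV structure on $\widehat{\HH}^*(A,A)$ is exactly the content of \cite[Corollary 6.7]{RiveraWang2017}, so invoking it here is circular. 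A further point you gloss over with ``equivalently'': the Gerstenhaber bracket of Theorem \ref{Theorem-Wang2015} is defined on the singular Hochschild cochain complex $C^*_{\sg}(A,A)$, not on $\mathcal D^*(A,A)$, so any chain-level verification on $\mathcal D^*(A,A)$ must also identify the bracket transported through the comparison between the two complexes with the one in the statement; this identification is part of the cited work and is missing from your sketch.
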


\section{Tate-Hochschild
cohomology of a group algebra}\label{section2}

Let $k$ be a field, $G$ a finite group and $kG$ the group algebra. Recall that  $kG$ is a symmetric algebra with the symmetrizing form:
$$\langle g, h\rangle=1\ \mbox{if $gh=1$ and}\  \langle g, h\rangle=0 \  \mbox{otherwise}$$
for  all $g,h\in G$. In particular, $\sum_{g\in G}g^{-1}\otimes g$ is a Casimir element of $kG$.  Thus from Section \ref{subsection-symmetric}, we have that the  Tate-Hochschild cohomology  $\widehat{\HH}^*(kG,kG)$  is a ``combination" of the Hochschild cohomology $\HH^*(kG,kG)$ and the Hochschild homology $\HH_*(kG,kG)$.


The Hochschild (co)chain complexes of $kG$ have the following simple descriptions.  For a set $X$, we denote by $k[X]$ the $k$-vector space spanned by the elements in $X$. In particular, we have $kG=k[G]$. Note that  $\overline{kG}$  can be identified with the $k$-vector space $k[\overline{G}]$, where $\overline{G}=G-\{1\}$.
When $n=0$, $\overline{G}^{\times n}$ denotes a one-point set and $k[\overline{G}^{\times n}]:=k$.  For simplicity, we write $(g_{1, n})$ for $(g_1, g_2, \cdots, g_n)\in G^{\times n}$.

The normalized bar resolution $(\Barr_*(kG),d_*)$ of $kG$  has the form  (throughout we just write all the maps on the base elements)
$$\Barr_{-1}(kG)=kG, \text{ and for }n\geq 0, \quad \Barr_n(kG)=k[G\times\overline{G}^{\times n}\times G],$$
$$d_0: \Barr_0(kG)=k[G\times G]\rightarrow kG, \quad (g_0,  g_1)\mapsto g_0g_1,\ \   \text{ and for }n\geq 1,$$
$$d_n: \Barr_n(kG)\rightarrow \Barr_{n-1}(kG), \ \  (g_0,\overline{g_{1, n}},  g_{n+1})\mapsto
\sum_{i=0}^{n}(-1)^i(g_0,  \overline{g_1}, \cdots, \overline{g_ig_{i+1}},
\cdots, \overline{g_n}, g_{n+1}).$$   Here $k[G\times\overline{G}^{\times n}\times G]$ denotes the $k$-vector space spanned by the elements in the Cartesian product $G\times\overline{G}^{\times n}\times G.$
We always use the normalized bar resolution (except in Appendix A) since it greatly simplifies the computations. From now on,  we just write $g$ for its image $\overline{g}$ in $\overline{G}$.

  Recall that the Hochschild cochain complex $(C^*(kG,kG),\delta^*)$ is defined as follows:
$$C^n(kG,kG)=\Hom_{(kG)^e}(\Barr_n(kG),kG)\simeq  \Hom_k(k[\overline{G}^{\times n}],kG)\simeq \Map(\overline{G}^{\times n},kG), \quad \text{for }n\geq 0,$$
where $\Map(\overline{G}^{\times n},kG)$ denotes the set of maps from $\overline{G}^{\times n}$ to $kG$, and the differential is given by
$$\delta^n: \Map(\overline{G}^{\times n},kG)\rightarrow \Map(\overline{G}^{\times (n+1)},kG), \quad \varphi\mapsto
\delta^n(\varphi),$$  where $\delta^n(\varphi)$ sends $g_{1,n+1}\in \overline{G}^{(n+1)}$ to
$$g_1\varphi(g_{2,n+1})+\sum_{i=1}^{n}(-1)^i\varphi(g_{1,i-1},g_ig_{i+1},g_{i+2,n+1})+(-1)^{n+1}\varphi(g_{1,n})g_{n+1}.$$ In degree zero, the differential map $\delta^0: kG\rightarrow \Map(\overline{G},kG)$ is given by
$$\delta^0(x)(g)=gx-xg  \quad (\mbox{for } x\in kG \mbox{ and }
g\in \overline{G}).$$

Recall that the Hochschild chain complex $(C_*(kG,kG),\partial_*)$ is defined as follows:
$$C_n(kG,kG)=kG\otimes_{(kG)^e}\Barr_n(kG)\simeq  k[G\times\overline{G}^{\times n}], \quad \text{for }n\geq 0,$$
where $k[G\times\overline{G}^{\times n}]$ denotes the $k$-vector space spanned by the elements in $G\times\overline{G}^{\times n}$, and the differential is given by
$$\partial_n: k[G\times\overline{G}^{\times n}]\rightarrow k[G\times\overline{G}^{\times {(n-1)}}],$$  $$(g_0,g_{1,n})\mapsto (g_0g_1,g_{2,n})+\sum_{i=1}^{n-1}(-1)^i(g_0,g_{1,i-1},g_ig_{i+1},g_{i+2,n})+(-1)^{n}(g_ng_0,g_{1,n-1}).$$ In degree one, the differential map $\partial_1: k[G\times \overline{G}]\rightarrow kG$ is given by
$$\partial_1(g_0,g_1)=g_0g_1-g_1g_0  \quad (\mbox{for } g_0\in G \mbox{ and }
g_1\in \overline{G}).$$

From Section \ref{subsection-symmetric}, the Tate-Hochschild cohomology $\widehat{\HH}^*(kG,kG)$ can be computed by the following Tate-Hochschild cochain complex $\mathcal{D}^*(kG,kG)$:
\begin{equation}\label{equation-tate-complex}
\cdots\stackrel{\partial_2}{\rightarrow}k[G\times\overline{G}]\stackrel{\partial_1}{\rightarrow}kG\stackrel{\tau}{\rightarrow}kG\stackrel{\delta_0}{\rightarrow} \Map(\overline{G},kG)\stackrel{\delta_1}{\rightarrow}\cdots,
\end{equation}
where the differential $\tau$ (from degree $-1$ component to degree $0$ component) is defined to be the trace map $x\mapsto \sum_{g\in G}gxg^{-1}$. Notice that $\sum_{g\in G}g\otimes g^{-1}$ is a Casimir element of  $kG$.

Since we have an algebra isomorphism
$(kG)^e\simeq k(G\times G)$ given by $g_1\otimes g_2\mapsto
(g_1,g_2^{-1})$, we can identify
each $kG$-$kG$-bimodule
 $M$ as a left $k(G\times G)$-module by the action $(g_1,g_2)\cdot x=g_1xg_2^{-1}$, or as a right $k(G\times G)$-module by the action $x\cdot(g_1,g_2)=g_2^{-1}xg_1$. In the following, we shall view the
bar resolution, the Hochschild co(chain) complexes for the group algebra $kG$ in terms of
$k(G\times G)$-modules. Consequently, $$\HH^n(kG,kG)\simeq \Ext_{k(G\times G)}^n(kG,kG),$$
where the $k(G\times G)$-module structure on both $kG$ by the action $(g_1,g_2)\cdot x=g_1xg_2^{-1}$, and $$\HH_n(kG,kG)\simeq \Tor^{k(G\times G)}_n(kG,kG),$$
where the first $kG$ has a right $k(G\times G)$-module structure by the action $x\cdot (g_1,g_2)=g_2^{-1}xg_1$, and the second $kG$ has a left $k(G\times G)$-module structure by the action $(g_1,g_2)\cdot x=g_1xg_2^{-1}$.

\bigskip

Now we recall from \cite{Wang2015} the (generalized) cup product on $\mathcal D^*(kG, kG)$. 
\begin{Def}\label{Def-cup-product}
Let $\alpha\in \mathcal{D}^n(kG,kG)$ and $\beta\in \mathcal{D}^m(kG,kG)$. Then the {\it (generalized) cup product} $\alpha\cup\beta$ is defined  by the following six cases:

{\it Case 1.} $n\geq 0, m\geq 0$. Then $\alpha\in C^n(kG,kG)$,
$\beta\in C^m(kG,kG)$, and the cup product $\alpha\cup \beta\in C^{n+m}(kG,kG)=\mathcal{D}^{n+m}(kG,kG)$ is
the same as the usual cup product on $C^*(kG, kG)$:
$$\alpha\cup \beta: \overline{G}^{\times n+m}\rightarrow kG,  \quad g_{1,n+m}\mapsto \alpha(g_{1,n})\beta(g_{n+1,n+m}).$$

{\it Case 2.} $n\leq -1, m\leq -1$. Then $\alpha=(g_0,g_{1,s})\in C_s(kG,kG)$ with $s=-n-1\geq 0$,
$\beta=(h_0,h_{1,t})\in C_t(kG,kG)$ with $t=-m-1\geq 0$, and the cup product $\alpha\cup \beta\in C_{s+t+1}(kG,kG)=\mathcal{D}^{n+m}(kG,kG)$ is
defined by
$$\alpha\cup \beta=\sum_{g\in G}(gh_0,h_{1,t},g^{-1}g_0,g_{1,s})\in k[G\times\overline{G}^{\times s+t+1}].$$
This product in $C_*(kG, kG)$ is originally defined in \cite[Theorem 6.1]{Abb} inspired from string topology.

{\it Case 3.} $n\geq 0, m\leq -1$ and $n+m\leq -1$. Then $\alpha\in C^n(kG,kG)$,
$\beta=(h_0,h_{1,t})\in C_t(kG,kG)$ with $t=-m-1\geq 0$, and the cup product $\alpha\cup \beta\in C_{t-n}(kG,kG)=\mathcal{D}^{n+m}(kG,kG)$ is
the same as the usual cap product $\cap$ (which induces an action of Hochschild cohomology on Hochschild homology):
$$\alpha\cup \beta=(\alpha(h_{t-n+1,t})h_0,h_{1,t-n})\in k[G\times\overline{G}^{\times t-n}].$$

{\it Case 4.} $n\geq 0, m\leq -1$ and $n+m\geq 0$. Then $\alpha\in C^n(kG,kG)$,
$\beta=(g_0,g_{1,t})\in C_t(kG,kG)$ with $t=-m-1\geq 0$, and the cup product $\alpha\cup \beta\in C^{n-t-1}(kG,kG)=\mathcal{D}^{n+m}(kG,kG)$ is
defined as the following generalized cap product:
$$\alpha\cup \beta: \overline{G}^{\times n-t-1}\rightarrow kG,  \quad h_{1,n-t-1}\mapsto \sum_{g\in G}\alpha(h_{1,n-t-1},g^{-1},g_{1,t})g_0g.$$

{\it Case 5.} $n\leq -1, m\geq 0$ and $n+m\leq -1$. Then $\alpha=(g_0,g_{1,s})\in C_s(kG,kG)$ with $s=-n-1\geq 0$,
$\beta\in C^m(kG,kG)$, and the cup product $\alpha\cup \beta\in C_{s-m}(kG,kG)=\mathcal{D}^{n+m}(kG,kG)$ is
the following cap product $\cap$ from the right side:
$$\alpha\cup \beta=(g_0\beta(g_{1,m}),g_{m+1,s})\in k[G\times\overline{G}^{\times s-m}].$$

{\it Case 6.} $n\leq -1, m\geq 0$ and $n+m\geq 0$. Then $\alpha=(g_0,g_{1,s})\in C_s(kG,kG)$ with $s=-n-1\geq 0$,
$\beta\in C^m(kG,kG)$, and the cup product $\alpha\cup \beta\in C^{m-s-1}(kG,kG)=\mathcal{D}^{n+m}(kG,kG)$ is
defined as the following generalized cap product from the right side:
$$\alpha\cup \beta: \overline{G}^{\times m-s-1}\rightarrow kG,  \quad h_{1,m-s-1}\mapsto \sum_{g\in G}gg_0\beta(g_{1,s},g^{-1},h_{1,m-s-1}).$$
\end{Def}

\begin{Rem}\label{remark-signs-change}
Since the definition of the  cup product $\cup$ in this paper  is different  from that in \cite{Wang2015}, in order   to make   the following identity still hold in $\mathcal D^*(kG, kG)$ (cf. Lemma \ref{lemma-pairing-compatibility}), $$\partial(\alpha\cup \beta)=\partial(\alpha)\cup \beta+(-1)^{m} \alpha\cup \partial(\beta), \quad \mbox{for $\alpha\in \mathcal D^m(kG, kG)$ and $\beta\in \mathcal D^n(kG, kG),$}$$ we have to change the signs of the differential in the negative part $\mathcal D^{<0}(kG, kG)$. That is, the new differential $\partial'$ on $\mathcal D^*(kG, kG)$ is given as follows: 
$$\partial'_m(\alpha)=\begin{cases}
(-1)^{m}\partial_m(\alpha) & \mbox{for $\alpha\in \mathcal D^{m}(kG, kG)$ and $m< 0$,}\\
 \delta(\alpha)  & \mbox{for  $\alpha\in \mathcal D^{m}(kG, kG)$ and  $m\geq 0.$}
\end{cases}$$
By Section \ref{subsection-symmetric}, there is a non-degenerate bilinear form on $\mathcal D^*(kG, kG)$ (induced by the symmetrizing form $\langle\cdot, \cdot\rangle$ on kG)
$$\langle\cdot,\cdot\rangle: \mathcal{D}^{*}(kG,kG)\times \mathcal{D}^{*}(kG, kG)\rightarrow k$$
For $\alpha\in C^m(kG, kG)$ and $\beta=(g_0, g_{1, n}) \in C_{n}(kG, kG),$
we define
$$\langle \beta, \alpha\rangle=\langle \alpha, \beta\rangle:=
\begin{cases}
\langle \alpha(g_{1, n}), g_{0}\rangle & \mbox{if $m=n$},\\
0 & \mbox{otherwise}.
\end{cases}
$$
 As usual, we call an element $\alpha\in \mathcal D^n(kG,kG)$ homogeneous of degree $n$, and its degree will be denoted by $|\alpha|$. In particular, $|\alpha|=-m-1$ for $\alpha\in C_m(kG, kG)=\mathcal D^{-m-1}(kG, kG)$. \end{Rem}
\begin{Lem}\label{lemma-pairing-compatibility}
The following identities hold in the complex $(\mathcal D^*(kG, kG), \partial')$
$$\langle \partial'(\alpha), \beta\rangle =(-1)^{|\alpha|+1} \langle \alpha, \partial'(\beta)\rangle, \quad
\langle \alpha\cup \beta, \gamma\rangle =\langle \alpha, \beta\cup \gamma\rangle,$$
$$\partial'(\alpha\cup \beta)=\partial'(\alpha)\cup\beta+(-1)^{|\alpha|} \alpha\cup \partial'(\beta)$$
for homogeneous elements  $\alpha, \beta, \gamma\in\mathcal D^*(kG, kG).$
\end{Lem}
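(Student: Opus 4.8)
The plan is to prove all three identities by a direct, case-by-case computation with the explicit formulas of Section \ref{section2}, organizing the bookkeeping around one elementary observation: the pairing $\langle\cdot,\cdot\rangle$ vanishes on $\mathcal D^p(kG,kG)\times\mathcal D^q(kG,kG)$ unless $p+q=-1$. Consequently each identity needs to be tested only for the finitely many degree configurations of the homogeneous arguments that make both sides potentially nonzero, and in each such configuration the $\mathcal D$-degrees of all participating elements are pinned down. Conceptually the three identities say that $(\mathcal D^*(kG,kG),\partial',\cup,\langle\cdot,\cdot\rangle)$ satisfies the defining relations of a cyclic $A_\infty$-structure in arities one and two — (1) is the cyclic symmetry of $m_1=\partial'$, (2) the cyclic symmetry of $m_2=\cup$, and (3) the $A_\infty$-relation $m_1 m_2=m_2(m_1\ot 1)+m_2(1\ot m_1)$ — so one could also deduce them from Theorem \ref{theorem-A-infinity-algebra} once one checks that the $\cup$ of Definition \ref{Def-cup-product} and the $\partial'$ of Remark \ref{remark-signs-change} differ from the $m_2$, $m_1$ of loc.\ cit.\ only by signs depending on the degrees; but since the formulas here are completely explicit, a direct verification is the cleanest route.

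For the first identity, the degree constraint leaves (writing $m=|\alpha|$) the range $m\ge 1$ with $\beta\in C_{m+1}$ and $\partial'\beta\in C_m$, the symmetric range $m\le -3$, and the two ``seam'' cases $m=0$ and $m=-1$. In the purely non-negative (resp.\ purely negative) range the claim, after absorbing the sign twist in $\partial'$, is exactly the adjunction $\langle\delta^*\varphi,c\rangle=\pm\langle\varphi,\partial_* c\rangle$ between the Hochschild cochain differential and the Hochschild chain differential with respect to the Koenig--Liu--Zhou pairing recalled in Remark \ref{duality-in-Tate-Hochschild-cohomology}; this is proved by matching, term by term, the two alternating sums that define $\delta^*\varphi$ and $\partial_* c$. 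For $m=-1$ the differential $\partial'$ on both sides is the trace map $\tau(x)=\sum_{g\in G}gxg^{-1}$, and the identity reduces to $\langle\tau(\alpha),\beta\rangle=\langle\alpha,\tau(\beta)\rangle$, which follows from the conjugation invariance $\langle gxg^{-1},gyg^{-1}\rangle=\langle x,y\rangle$ of the symmetrizing form of $kG$ together with a reindexing $g\mapsto g^{-1}$ of the sum over $G$. For $m=0$ (and its mirror $m=-2$) it is the short computation with $\delta^0(x)(g)=gx-xg$ and $\partial_1(g_0,g_1)=g_0g_1-g_1g_0$, using $\langle ab,c\rangle=\langle a,bc\rangle$.

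For the second and third identities one runs through the six cases of Definition \ref{Def-cup-product} for $\alpha\cup\beta$. In identity (2) the degree of $\gamma$ is then forced by $|\alpha|+|\beta|+|\gamma|=-1$, the product $\beta\cup\gamma$ falls into a — possibly different — one of the six cases, and $\langle\alpha\cup\beta,\gamma\rangle=\langle\alpha,\beta\cup\gamma\rangle$ becomes a direct comparison that uses only associativity of the multiplication of $kG$, the Frobenius identity $\langle ab,c\rangle=\langle a,bc\rangle$, and a substitution in the summation variable $g\in G$; the configuration with $\alpha,\beta$ non-negative and $\gamma$ negative recovers the classical statement that the cap product is adjoint to the cup product. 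Identity (3) is the assertion that $\cup$ is a chain map for $\partial'$; the sign modification of $\partial'$ made in Remark \ref{remark-signs-change} was introduced precisely so that this holds for the present $\cup$, so once $\partial'$ is expanded into $\partial_*$ and $\delta^*$ each case is the corresponding arity-two $A_\infty$-relation of Theorem \ref{theorem-A-infinity-algebra} transported through the rescaling, or equivalently an elementary direct check. The main obstacle throughout is purely clerical: keeping the signs coherent across the seam $\mathcal D^{-1},\mathcal D^0$ and across the six cup-product cases, so that the factors $(-1)^{m}$ in $\partial'$ combine correctly with the factors $(-1)^{|\alpha|}$ in the Leibniz and adjunction formulas; once a single consistent sign convention is fixed, every individual case reduces to an elementary manipulation of sums indexed by $G$.
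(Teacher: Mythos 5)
Your proposal is correct, and for the first two identities it runs along essentially the same lines as the paper: identity (1) is the term-by-term adjunction between $\delta^*$ and $\partial_*$ plus the seam cases involving the trace map $\tau$ (the paper dismisses this as a straightforward computation), and identity (2) is verified exactly as in the paper, in the two degree patterns (two non-negative, one negative) and (two negative, one non-negative), using the Frobenius property of the symmetrizing form and a reindexing of the sum over $G$. Where you genuinely diverge is identity (3): you propose to check the Leibniz rule directly in all six cases of Definition \ref{Def-cup-product} (or to import it from Theorem \ref{theorem-A-infinity-algebra} after reconciling signs), whereas the paper exploits the non-degeneracy of $\langle\cdot,\cdot\rangle$ together with the already-established identities (1) and (2) to reduce the Leibniz rule to just the two pure cases — both arguments in $C^*(kG,kG)$, where it is the classical Leibniz rule, and both in $C_*(kG,kG)$, where a short direct computation with the Case 2 product suffices. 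The paper's reduction buys you exemption from the messiest part of your plan, namely the mixed cases (the cap and generalized cap products of Cases 3--6 and the seam configurations where $\partial'$ crosses degree $-1/0$ via $\tau$); your route is more laborious but sound, since the identity does hold on the nose. A word of caution on your alternative shortcut: invoking Theorem \ref{theorem-A-infinity-algebra} is delicate precisely because the cup product of Definition \ref{Def-cup-product} differs from that of \cite{Wang2015,RiveraWang2017}, which is the very reason the modified differential $\partial'$ of Remark \ref{remark-signs-change} was introduced, so the sign dictionary you mention must actually be written out; your stated preference for the direct verification is therefore the safer of your two options, and if you want to economize, the paper's duality argument is available to you once (1) and (2) are in hand.
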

\begin{proof}
The first equality follows from a straightforward computation. Let us verify the second identity. We have the following two cases.
\begin{enumerate}[(i)]
\item For $\phi\in C^m(kG, kG), \psi\in C^n(kG, kG)$ and $\alpha:=(g_0, g_{1, m+n})\in C_{m+n}(kG, kG)$, we have
\begin{equation*}
\begin{split}
\langle \phi\cup \psi, \alpha\rangle=\langle g_0,  \phi(g_{1, m})\psi(g_{m+1, m+n})\rangle=\langle \phi, (\psi(g_{m+1, m+n})g_0, g_{1, m})\rangle=\langle \phi, \psi\cup \alpha\rangle\\
\langle \phi\cup \psi, \alpha\rangle=\langle g_0,  \phi(g_{1, m})\psi(g_{m+1, m+n})\rangle=\langle (g_0\phi(g_{1, m}), g_{m+1, m+n}), \psi\rangle=\langle\alpha\cup\phi,\psi\rangle.
\end{split}
\end{equation*}
This implies that $\langle \phi\cup \psi, \alpha\rangle=\langle \phi, \psi\cup \alpha\rangle=\langle\alpha\cup\phi,\psi\rangle$.
\item For $\alpha=(g_0, g_{1, r})\in C_r(kG, kG), \beta=(h_0, h_{1, t})\in C_t(kG, kG)$ and $\phi\in C^{r+t+1}(kG, kG)$, we have
\begin{equation*}
\begin{split}
\langle \alpha\cup \beta, \phi \rangle=\sum_{g\in G} \langle gh_0, \phi(h_{1, t}, g^{-1}g_0, g_{1, r})\rangle
=\sum_{g\in G}\langle (g_0, g_{1, r}), gh_0\phi(h_{1, t}, g^{-1}, ?)\rangle=\langle \alpha, \beta\cup \phi\rangle\\
\langle \alpha\cup \beta, \phi \rangle=\sum_{g\in G} \langle gh_0, \phi(h_{1, t}, g^{-1}g_0, g_{1, r})\rangle
=\sum_{g\in G}\langle (h_0, h_{1, t}), \phi(?, g^{-1}g_0, g_{1, r})g\rangle=\langle \beta,  \phi\cup \alpha\rangle
\end{split}
\end{equation*}
where we need to  use the identity  $\sum_{g\in G} (gg_0, g^{-1})=\sum_{g\in G}(g, g_0g^{-1})$ in $k[G\times \overline{G}]$ for $g_0\in G$. 
\end{enumerate}
This verifies the second identity. From the first two identities,  to  verify the third identity,   it is sufficient to consider the following two cases.
\begin{enumerate}[(i)]
\item For $\phi, \psi\in C^*(kG, kG)$, it is well-known that
$$\delta(\phi\cup \psi)=\delta(\phi)\cup \psi+(-1)^{|\phi|}\phi\cup \delta(\psi).
$$
In this case, the third identity holds since $\partial'=\delta$ for $C^*(kG, kG)$.
\item   For $\alpha=(g_0, g_{1, s})\in C_{s}(kG, kG)$ and $\beta=(h_0, h_{1, t})\in C_t(kG, kG)$, we have
\begin{equation*}
\begin{split}
\partial(\alpha\cup \beta)=&\sum_{g\in G} \partial((gh_0, h_{1, t}, g^{-1}g_0, g_{1, s}))\\
=&\alpha\cup\partial(\beta)+(-1)^{|\beta|}\partial(\alpha)\cup \beta.
\end{split}
\end{equation*}
Thus $\partial'(\alpha\cup \beta)=(-1)^{|\alpha|}\alpha\cup \partial'(\beta)+\partial'(\alpha)\cup \beta$ since $\partial'(\alpha)=(-1)^{|\alpha|}\partial(\alpha)$.
\end{enumerate}
This proves the lemma. \end{proof}

As a consequence, the (generalized)  cup product $\cup$ on $\mathcal D^*(kG, kG)$ induces a graded-commutative associative product (still denoted by $\cup$) over $\widehat{\HH}^*(kG,kG),$ which coincides with the Yoneda product in the singularity category $\DD_{\sg}((kG)^e) $ (cf. \cite{Wang2015}).
We remind that, contrary to the Hochschild cochain complex case, the above cup product $\cup$ on the Tate-Hochschild cochain complex is not associative, but it is associative up to homotopy (cf. \cite{RiveraWang2017}). From \cite[Theorem 6.3]{RiveraWang2017}, it follows that the cup product extends to an $A_{\infty}$-algebra  structure $(m_1, m_2, m_3, \cdots)$ on $\mathcal{D}^*(kG, kG)$ with $m_1=\partial', m_2=\cup$ and $m_i=0$ for $i>3$ (cf. Theorem \ref{theorem-A-infinity-algebra}). The formula for $m_3$ is described as follows.
\begin{enumerate}[(i)]
\item If either $\phi, \varphi, \psi\in C^*(kG, kG)$ or $\phi, \varphi, \psi\in C_*(kG, kG)$, 
then $m_3(\phi, \varphi, \psi)=0$.
\item If $\alpha, \beta \in C_*(kG, kG)$ and $\phi \in C^{*}(kG, kG)$, then $m_3(\alpha, \beta, \phi)=0=m_3(\phi, \alpha, \beta)$.
\item If $\alpha \in C_{*}(kG, kG)$ and $\phi, \varphi \in C^{*}(kG,kG)$, then $m_3(\phi, \varphi,\alpha)=0=m_3(\alpha, \phi, \varphi)$.
\item For $\phi\in C^{m}(kG, kG), \varphi\in C^{n}(kG, kG)$ and $\alpha=(g_0, g_1, \cdots, g_r) \in C_{r}(kG, kG)$,
\begin{itemize}
\item if $r+2\leq m+n$, then $m_3(\phi,\alpha,\varphi) \in C^{m-r+n-2}(kG, kG)$ is defined by
\begin{eqnarray*}
\lefteqn{
m_3(\phi, \alpha, \varphi)(h_1, \cdots, h_{m-r+n-2})=}\\
&&\sum_{g\in G}\sum_{j=1}^{\min\{n, r+1\}}(-1)^{m+r+j-1} \phi(h_{1, m-r+j-2},  g, g_{j, r})g_0
\varphi(g_{1, j-1}, g^{-1}, h_{m-r+j-1, m-r+n-2}), \\
\end{eqnarray*}
\item if $r+2>m+n$, then $m_3(\phi,\alpha,\varphi)=0.$
\end{itemize}
\item For $\alpha=(g_0, g_{1, r}) \in C_{r}(kG, kG), \beta=(h_0, h_{1, s})\in C_{s}(kG, kG)$ and $\phi\in C^{m}(kG, kG),$ \begin{itemize}
\item if  $m-1\leq r+s$, then
\begin{eqnarray*}
m_3(\alpha,\phi, \beta)=\sum_{g\in G}\sum_{j=0}^s(-1)^{n-j} (g_{0}\phi(g_{1, m-s+j-1}, g, h_{j+1, s}) h_0, h_{1, j}, g^{-1}, g_{m-s+j, r}),
\end{eqnarray*}
\item if $m-1>r+s$, then $m_3(\alpha,\phi, \beta)=0$.
\end{itemize}
\end{enumerate}

Therefore we  have the following identity, for $\alpha_1, \alpha_2, \alpha_3\in \mathcal D^*(kG, kG)$,
\begin{eqnarray*}
\alpha_1\cup(\alpha_2\cup \alpha_3)-(\alpha_1\cup \alpha_2)\cup \alpha_3\lefteqn{=\partial'(m_3(\alpha_1, \alpha_2, \alpha_3))+m_3(\partial'(\alpha_1), \alpha_2, \alpha_3)}\\
&+(-1)^{|\alpha_1|} m_3(\alpha_1, \partial'(\alpha_2), \alpha_3)+(-1)^{|\alpha_1|+|\alpha_2|}m_3(\alpha_1, \alpha_2, \partial'(\alpha_3)).
\end{eqnarray*}
From \cite[Proposition 6.5]{RiveraWang2017},  it follows that the $A_{\infty}$-algebra structure is compatible with  the non-degenerate  bilinear form  $\langle\cdot, \cdot\rangle$ in the following sense:
\begin{equation}\label{equation-cyclic-pairing}\langle \alpha_0, m_k(\alpha_1, \cdots, \alpha_k)\rangle=(-1)^{|\alpha_0|(2-k)+k}\langle m_k(\alpha_0, \cdots, \alpha_{k-1}), \alpha _k\rangle\end{equation}
for any $\alpha_i\in \mathcal D^*(kG, kG),\ 0\leq  i\leq  k$. Such $A_{\infty}$-algebra is called {\it cyclic}. 
In particular, Formula (iv) is dual to Formula (v) in the sense of Equation (\ref{equation-cyclic-pairing}). In other words, we may get one from the other by Equation (\ref{equation-cyclic-pairing}).





Moreover, one can define a Lie bracket $[\cdot,\cdot]$ on $\widehat{\HH}^*(kG,kG)$ such that $(\widehat{\HH}^*(kG,kG), \cup, [\cdot,\cdot])$ becomes a Gerstenhaber algebra,
that is,  for homogeneous elements $\alpha,\beta,\gamma$ in
$\widehat{\HH}^*(kG,kG)$, the following three conditions hold:
\begin{itemize}
\item  $(\widehat{\HH}^*(kG,kG), \cup)$ is an associative algebra and it is graded
commutative, that is, the cup product $\cup$ is an associative
multiplication and satisfies $\alpha\cup
\beta=(-1)^{|\alpha||\beta|}\beta\cup \alpha$;

\item  $(\widehat{\HH}^*(kG,kG), [\cdot,\cdot])$ is a graded
Lie algebra of degree $-1$, that is, the Lie bracket $[\cdot,\cdot]$ satisfies $[\alpha,\beta]=-(-1)^{(|\alpha|-1)(|\beta|-1)}[\beta,\alpha]$ and the graded Jacobi identity $$(-1)^{(|\alpha|-1)(|\gamma|-1)}[[\alpha,\beta],\gamma]+(-1)^{(|\beta|-1)(|\alpha|-1)}[[\beta,\gamma],\alpha]+(-1)^{(|\gamma|-1)(|\beta|-1)}[[\gamma,\alpha],\beta]=0;$$

\item Poisson rule: $[\alpha\cup \beta, \gamma]=[\alpha, \gamma]\cup \beta + (-1)^{|\alpha|(|\gamma|-1)}\alpha\cup[\beta,
\gamma]$.
\end{itemize}

The Lie bracket $[\cdot,\cdot]$ is a generalization of the Gerstenhaber bracket $[\cdot,\cdot]$ in Hochschild cohomology, and we can write it down explicitly  at the complex level.   Since $[\cdot,\cdot]$ is determined by  the cup product $\cup$ and the BV-operator $\widehat{\Delta}$ in $\widehat{\HH}^*(kG,kG)$ (see below), we refrain from giving a formula at the complex level. The interested reader can refer the details to the paper \cite{Wang2015}.

\bigskip

Let us now define the BV-operator $\widehat{\Delta}$ in $\widehat{\HH}^*(kG,kG)$. At the complex level, $\widehat{\Delta}$ is the Connes' $B$-operator $B$ for the negative part $\mathcal{D}^{<0}(kG,kG)=C_{*}(kG,kG)$, is the operator $\Delta$ for the positive part $\mathcal{D}^{>0}(kG,kG)=C^{>0}(kG,kG)$, and $\widehat{\Delta}: \mathcal{D}^{0}(kG,kG)\rightarrow \mathcal{D}^{-1}(kG,kG)$ is zero. More precisely, if $n\leq -1$, then $\widehat{\Delta}=B: k[G\times\overline{G}^{\times s}]\rightarrow k[G\times\overline{G}^{\times {s+1}}]$ (let $s=-n-1$) is given by
$$\alpha=(g_0,g_{1,s})\mapsto \widehat{\Delta}(\alpha)=\sum_{i=0}^s(-1)^{is}(1,g_{i,s},g_{0,i-1});$$
if $n\geq 1$, then $\widehat{\Delta}=\Delta: \Map(\overline{G}^{\times n},kG)\rightarrow \Map(\overline{G}^{\times (n-1)},kG)$ maps any $\alpha: \overline{G}^{\times n}\rightarrow kG$ to $\Delta(\alpha):  \overline{G}^{\times (n-1)}\rightarrow kG$ such that
$$\Delta(\alpha)(g_{1,n-1})=\sum_{g_n\in G}\sum_{i=1}^n(-1)^{i(n-1)}\langle \alpha(g_{i,n-1}, g_n,
 g_{1,i-1}),1\rangle g_n^{-1},$$
 where $\langle\cdot, \cdot\rangle$ is the symmetrizing form on $kG$ (cf. \cite[Section 8]{LZ2015}). 
It is easy to verify that the operator
$\widehat{\Delta}: \mathcal{D}^{*}(kG,kG)\rightarrow \mathcal{D}^{*-1}(kG,kG)$ is a chain map (with $\widehat{\Delta}^2=0$) and therefore induces an operation
(still denoted by $\widehat{\Delta}$) in  $\widehat{\HH}^*(kG,kG)$. It turns out that the Gerstenhaber algebra
$(\widehat{\HH}^*(kG,kG), \cup, [\cdot, \cdot])$ together with the operator $\widehat{\Delta}$
is a Batalin-Vilkovsky algebra (BV-algebra), that is, in
addition to be a Gerstenhaber algebra, $(\widehat{\HH}^*(kG,kG),\widehat{\Delta})$ is
a complex and
$$[\alpha,\beta]=-(-1)^{(|\alpha|-1)|\beta|}(\widehat{\Delta}(\alpha\cup\beta)-\widehat{\Delta}(\alpha)\cup\beta-(-1)^{|\alpha|}\alpha\cup\widehat{\Delta}(\beta))$$
for all homogeneous elements $\alpha, \beta\in \widehat{\HH}^*(kG,kG)$ (cf. \cite{Wang2015}).

\begin{Rem}\label{sign-convention} The signs in the
definition of a BV-algebra depend on the choice of the definitions
of cup product and Lie bracket. If we define $\alpha{\cup}
'\beta=(-1)^{|\alpha||\beta|}\alpha\cup\beta$ and
${\Delta}'(\alpha)=(-1)^{(|\alpha|-1)}\Delta(\alpha)$,
then we get
$$[\alpha,\beta]=(-1)^{|\alpha|}({\Delta}'(\alpha{\cup}'\beta)-
{\Delta}'(\alpha){\cup}'\beta-(-1)^{|\alpha|}\alpha{\cup}'{\Delta}'(\beta)),$$
which is the equality in the usual definition of a BV-algebra (see,
for example \cite{Getzler1994,Menichi2004}). We choose the sign
convention from \cite{Tradler2008} because of our convention of the
definitions of cup product and Connes' $B$-operator in the
Hochschild (co)homology theory.
\end{Rem}


\section{Reminder on cohomology and Tate cohomology of finite groups}\label{section3}

In this section we recall some notions on Tate  cohomology of finite groups. For the details, we refer the reader to \cite[Chapter VI]{Brown}.

\subsection{Group (co)homology}\label{subsection: remainder on group cohomology}

Let $k$ be a field, $G$ a finite group and $kG$ the group algebra. Let $M$ be a left $kG$-module. Then the cohomology of $G$ with coefficients in $M$ is defined to be
$$\Hu^p(G, M)=\Ext^p_{kG}(k, M), \ p\geq 0,$$
and the homology of $G$ with coefficients in $M$ is defined to be
$$\Hu_p(G, M)=\Tor_p^{kG}(k, M),\  p\geq 0,$$ where $k$ is  the left trivial $kG$-module in $\Ext^p_{kG}(k, M)$ and is the  right trivial $kG$-module in $\Tor_p^{kG}(k, M)$. 

By Remark \ref{Remark: contructing resolutions from bar resolution}, the complex $P_*:=\Barr_*(kG)\otimes_{kG}
k$ is the \textit{standard   resolution} of the trivial $kG$-module $k$.  So there exist canonical complexes computing group (co)homology.

Recall that the \textit{group cohomology  complex} $(C^*(G, M),\delta^*)$ is defined as follows:
$$C^n(G, M)=\Hom_{kG}(\Barr_n(kG)\otimes_{kG}
k, M)\simeq  \Hom_{kG}(k[\overline{G}^{\times n}], M)\simeq \Map(\overline{G}^{\times n}, MG), \quad \text{for }n\geq 0,$$
  and the differential is given by
$$\delta^n: \Map(\overline{G}^{\times n}, M)\rightarrow \Map(\overline{G}^{\times (n+1)}, M), \quad \varphi\mapsto
\delta^n(\varphi),$$  where $\delta^n(\varphi)$ sends $g_{1,n+1}\in \overline{G}^{n+1}$ to
$$g_1\varphi(g_{2,n+1})+\sum_{i=1}^{n}(-1)^i\varphi(g_{1,i-1},g_ig_{i+1},g_{i+2,n+1})+(-1)^{n+1}\varphi(g_{1,n}).$$ In degree zero, the differential map $\delta^0: M\rightarrow \Map(\overline{G},M)$ is given by
$$\delta^0(x)(g)=gx-x  \quad (\mbox{for } x\in M \mbox{ and }
g\in \overline{G}).$$

We can consider  $M$ as a right $kG$-module via $x\cdot g=g^{-1}x, x\in M, g\in G.$ Then $\Tor_*^{kG}(k, M)\cong \Tor_*^{kG}(M, k),$ where we use the right $kG$-module  $M$  in $\Tor_*^{kG}(M, k)$. 
Notice  that  $\Tor_*^{kG}(M, k)$ can by computed by  the \textit{group homology   complex} $(C_*(G, M),\partial_*)$, which is   defined as follows:
$$C_n(G,M)=M\otimes_{kG}\Barr_n(kG)\otimes_{kG} k \simeq  M\otimes k[\overline{G}^{\times n}], \quad \text{for }n\geq 0,$$
 and the differential $\partial_n: M\otimes k[\overline{G}^{\times n}]\rightarrow M\otimes k[\overline{G}^{\times (n-1)}], n\geq 2$ is given by
  $$x\otimes g_{1,n} \mapsto x\cdot g_1\otimes (g_{2,n})+\sum_{i=1}^{n-1}(-1)^i x\otimes (g_{1,i-1},g_ig_{i+1},g_{i+2,n})+(-1)^{n} x\otimes (g_{1,n-1}),$$  and in degree one, the differential map $\partial_1: M\otimes k[\overline{G}]\rightarrow M$ is given by
$$\partial_1(x\otimes g_1)=x\cdot g_1-x  \quad (\mbox{for } x\in M \mbox{ and }
g_1\in \overline{G}). $$

\bigskip

Let us explain  conjugation maps, restriction maps and corestriction maps on group (co)homology, as we shall need them in Section \ref{section-cup-product}.  For more details, we refer the reader to the textbook \cite{Evens}. 

 Let $G$ be  a finite group and $M$ a $kG$-module. Let $Q_*$ be a projective resolution of $k$ as a $kG$-module.
\begin{itemize}
\item[(1)] For any $g\in G$ and $H\leq G$, write ${}^g\!H=gHg^{-1}$.   The conjugation map $g^*: \mathrm H^*(H,M)\longrightarrow \mathrm H^*({}^g\!H,M)$    is induced by the map
     $$g^*: \Hom_{kH}(Q_*, M)\to \Hom_{kH}(Q_*, M),\quad \varphi \mapsto {}^g\!\varphi,$$
     where  ${}^g\!\varphi(x)=g\varphi(g^{-1}x), x\in Q_*;$

\item[(2)] For $H\leq G$,  the restriction map $res^G_H: \mathrm H^*(G, M)\longrightarrow \mathrm H^*(H, M)$  is induced by the natural inclusion map
     $\Hom_{kG}(Q_*, M)\to \Hom_{kH}(Q_*, M)$, as homomorphisms of $kG$-modules    are necessarily homomorphisms of $kH$-modules;

\item[(3)] For  $H\leq G$ the corestriction map $cor^G_{H}: \mathrm H^*(H, M)\longrightarrow \mathrm H^*(G,M)$ is induced from the map
$$\Hom_{kH}(Q_*, M)\to \Hom_{kG}(Q_*, M),\quad \varphi\mapsto \sum_{t\in T} {}^t\!\varphi,$$
 where $T$ is a complete set of representatives of the left cosets of the subgroup $H$ in $G$;

 \item[(1')] For any $g\in G$,  the conjugation map $g_*: \mathrm H_*(H,M)\longrightarrow \mathrm H_*({}^g\!H,M)$    is induced by the map
     $$g_*:  M\otimes_{kH}Q_* \to M\otimes_{k {}^g\!H}Q_*, \quad x\otimes_{kH} y\mapsto xg^{-1}\otimes_{k{}^g\!H} gy;$$

\item[(2')] For $H\leq G$,  the restriction map $res^G_H: \mathrm H_*(G, M)\longrightarrow \mathrm H_*(H, M)$ is     is induced by the map
     $$M\otimes_{kG}Q_* \to M\otimes_{k H}Q_*,  \quad x\otimes_{kG} y \mapsto \sum_{t\in T} xt\otimes_{kH}t^{-1}x,$$
     where $T$ is a complete set of representatives of the left cosets of the subgroup $H$ in $G$;

\item[(3')] For  $H\leq G$,  the corestriction map $cor^G_{H}: \mathrm H_*(H, M)\longrightarrow \mathrm H_*(G,M)$ is induced from the natural quotient map
$M\otimes_{kH}Q_* \to M\otimes_{k G}Q_*.$
\end{itemize}

\subsection{ Tate cohomology of groups}
\label{subsection-Tate-cohomology}

Applying the duality functor $( )^*=\Hom_k(-,k)$ to the standard resolution $P_*=\Barr_*(kG)\otimes_{kG} k$,  we get a ``backwards projective resolution" $\Hom(\Barr_*(kG)\otimes_{kG}
k, k)$ of $k$. By splicing together $\Barr_*(kG)\otimes_{kG}
k$ and $\Hom(\Barr_*(kG)\otimes_{kG}
k), k)$ we get a complete resolution of the trivial module $k$:
$$\xymatrix@R=1.5px{
\cdots\ar[r]& P_2\ar[r] & P_1\ar[r] & P_0\ar[rr]\ar@{->>}[dr] &  & {P_0}^*\ar[r] & {P_1}^* \ar[r]& \cdots, \\
& & & & k\ar@{^{(}->}[ur] & & &
}$$
where  the (left) $kG$-module structure over $P_n^*=\Hom_k(P_n,k)$ is given by $(g\varphi)(x)=\varphi(g^{-1}x)$ for any $\varphi\in P_n^*$ and $x\in P_n$.
We denote this complete resolution by $F_*$, where $F_n=P_n$ for $n\geq 0$ and $F_n=P_{-n-1}^*$ for $n\leq -1$. Let $U$ be any (left) $kG$-module. Applying the functor $\Hom_{kG}(-,U)$ to $F_*$, we get a cochain complex, denoted by $\widehat{C}^*(G, U)$ (called {\it Tate cochain complex}  of $G$): 
\begin{enumerate}
\item The nonnegative part $\widehat{C}^{\geq 0}(G, U)$ is exactly the group cohomology complex $C^*(G, U)$  with coefficients in $U$.

\item For each $n\leq -1$ (let $s=-n-1\geq 0$), we notice that there is a natural isomorphism
$$U\otimes_{kG}P_s\simeq \Hom_{kG}(P_s^*,U),\quad u\otimes_{kG} x\mapsto (\alpha\mapsto \sum_{g\in G} \alpha(gx)gu),$$
 where in $U\otimes_{kG} P_s$ we consider $U$ as a right $kG$-module by the action $ug=g^{-1}u$, and where $P_s^*$ is viewed as a left $kG$-module.
 It follows that, for $n\leq -1$ (let $s=-n-1\geq 0$), $$\widehat{C}^n(G, U) =  \Hom_{kG}(P_s^*,U)\simeq U\otimes_{kG}P_s \simeq U\otimes_{kG} (kG\otimes k(\overline{G}^{\times s}))\simeq
U\otimes k[\overline{G}^{\times s}]$$  and the differential
is given by
$$\partial_s(x, g_{1,s})=(g_1^{-1}x, g_{2,s})+\sum_{i=1}^{s-1}(-1)^i(x, g_{1,i-1},g_ig_{i+1},g_{i+2,s})+(-1)^{s}(x, g_{1,s-1})$$
for all $s\geq 1$, $x\in U$ and $g_1,\cdots, g_s\in \overline{G}$.
Therefore the negative part $C^{<0}(G, U)$ is exactly the group homology complex $C_*(G, U)$  with coefficients in $U$ (Here we consider $U$ as a right $kG$-module).

\item For $n=-1$ (or $s=0$), the differential $\delta_{-1}: C_0(G, U)=U\rightarrow U=C^0(G, U)$
is given by $u\mapsto (\sum_{g\in G}g)u$ for $u\in U$. 
\end{enumerate}

 The Tate cohomology of $G$ with coefficients in $U$ is defined to be the (co)homology group $$\widehat{\mathrm H}^n(G,U)=H^n(\widehat{C}^*(G, U))=H^n(\Hom_{kG}(F_*,U)).$$
We have the following descriptions of the Tate cohomology $\widehat{\mathrm H}^n(G,U)$:
\begin{enumerate}[(i)]
\item  $\widehat{\mathrm H}^n(G,U)\simeq
\mathrm H^n(G,U):= \Ext^n_{kG}(k,U)$ for all $n > 0$,

\item $\widehat{\mathrm H}^n(G,U)\simeq \mathrm H_{-n-1}(G,U):=
\Tor_{-n-1}^{kG}(U, k)$ for all $n < -1$,

\item there is an exact sequence
$$0\rightarrow \widehat{\mathrm H}^{-1}(G,U)\rightarrow \mathrm H_{0}(G,U) \stackrel{\alpha}\rightarrow \mathrm  H^0(G,U)\rightarrow \widehat{\mathrm H}^{0}(G,U)\rightarrow 0.$$ 
Denote the sum $\sum_{g\in G}g$ by $N$. Then the map $\alpha$ is the so-called norm map: $$\mathrm H_{0}(G,U)=U_G\rightarrow U^G=\mathrm H^0(G,U),\quad u\mapsto Nu.$$
\end{enumerate}
Therefore,  $\widehat{\mathrm H}^*(G,U)$ is a ``combination" of the group cohomology $\mathrm H^*(G,U)$ and the group homology $\mathrm H_*(G,U)$. We can summarize the above results by means of the following diagram (cf. \cite[VI. 4]{Brown}):
$$\xymatrix@R=1.3pc{
& & & & \mathrm H^0\ar@<-0.3ex>@{->>}[d] & \mathrm H^1\ar@<-0.5ex>@{=}[d] & \mathrm H^2\ar@<-0.5ex>@{=}[d] & \cdots\\
\cdots & \widehat{\mathrm H}^{-3}\ar@{=}[d]& \widehat{\mathrm H}^{-2}\ar@{=}[d] & \widehat{\mathrm H}^{-1}\ar@{_{(}->}[d] & \widehat{\mathrm H}^0 & \widehat{\mathrm H}^1 & \widehat{\mathrm H}^2 & \cdots \\
\cdots & \mathrm H_{2} & \mathrm H_{1} & \mathrm H_{0}\ar@{->}[uur]_{\alpha} & & & &
}$$

\medskip

Of particular interest to us is the case when $U=k$, the trivial $kG$-module. From now on, we always refer to Tate cohomology of a group algebra $kG$ as $\widehat{\mathrm H}^*(G,k)$, unless stated otherwise.

\begin{Rem}\label{-1-and-0-Tate-group-cohomology} If the characteristic of $k$ divides the order of $G$, then the map $\alpha: \mathrm H_{0}(G,k) \rightarrow  \mathrm H^0(G,k)$ is zero and we have that $\widehat{\mathrm H}^{-1}(G,k)\simeq \mathrm H_{0}(G,k)$ and $\widehat{\mathrm H}^{0}(G,k)\simeq \mathrm H^{0}(G,k)$. Otherwise,  the map $\alpha: \mathrm H_{0}(G,k) \rightarrow  \mathrm H^0(G,k)$ is an isomorphism and we have that $\widehat{\mathrm H}^p(G, k)=0$ for all $p\in \mathbb Z$. 
\end{Rem}

\section{Lifting  the additive decomposition to the complex level}\label{section4}

Let $k$ be a field and $G$ a finite group. Then the Tate-Hochschild cohomology $\widehat{\HH}^*(kG,kG)$ admits an additive
decomposition (as $k$-vector spaces):
$$\widehat{\HH}^*(kG,kG)\simeq \bigoplus_{x\in X}\widehat{\mathrm H}^*(C_G(x),k),$$
where $X$ is a set of representatives of conjugacy classes of elements of $G$ and $C_G(x)$ is the centralizer subgroup of $x\in G$, and where $\widehat{\mathrm H}^*(C_G(x),k)$ is the  Tate  cohomology of $C_G(x)$ (cf. \cite[Section 5]{Nguyen2012}). In this section, we give an explicit construction of the additive decomposition at the complex level. We deal with this task in three cases:

\medskip\begin{enumerate}[]
\item {\it The first case:} $n>0$. In this case, $$\widehat{\HH}^n(kG,kG)\simeq \HH^n(kG,kG)\simeq \bigoplus_{x\in X}\mathrm H^n(C_G(x),k)= \bigoplus_{x\in X}\Ext^n_{kC_G(x)}(k,k);$$
\item {\it The second case:} $n<-1$ (let $s=-n-1>0$). In this case, $$\widehat{\HH}^n(kG,kG)\simeq \HH_s(kG,kG)\simeq \bigoplus_{x\in X}\mathrm H_s(C_G(x),k)= \bigoplus_{x\in X}\Tor_s^{kC_G(x)}(k,k);$$
\item {\it The third case:} $n = 0, -1$. In this case, $$\widehat{\HH}^0(kG,kG)\simeq \bigoplus_{x\in X}\widehat{\mathrm H}^0(C_G(x),k), \quad \widehat{\HH}^{-1}(kG,kG)\simeq \bigoplus_{x\in X}\widehat{\mathrm H}^{-1}(C_G(x),k).$$
\end{enumerate}

\bigskip
{\it The first case.} This is just the usual additive decomposition of the Hochschild cohomology $\HH^*(kG,kG)$ (except in degree zero component) and its lifting (to the complex level) has been done by the first named author and the third named author in \cite{LZ2015}. The idea is as follows. Cibils and Solotar \cite{CS1997} constructed a subcomplex of the
Hochschild cochain complex $C^*(kG, kG)$ for each conjugacy class, and then
they showed that for a finite abelian group, the subcomplex is isomorphic to the group cohomology complex (cf. Section \ref{subsection: remainder on group cohomology}). This was generalized to any finite group in \cite{LZ2015}: for each conjugacy class, this complex computes the group cohomology of the corresponding centralizer
subgroup. Let us briefly recall the construction there. For simplicity,  we denote by $\mathcal{H}_*$ and $\mathcal{H}^*$ the Hochschild chain complex $C_*(kG,kG)$ and the Hochschild cochain complex $C^*(kG,kG)$ respectively.

Recall from Section \ref{section2} that
the Hochschild cohomology $\HH^*(kG,kG)$ of the group algebra $kG$ can
be computed by the Hochschild cochain  complex $C^*(kG, kG)$:
$$(\mathcal{H}^*)\quad \quad \quad \quad 0\longrightarrow kG\stackrel{\delta^0}{\longrightarrow} \Map(\overline{G},kG)\stackrel{\delta^1}{\longrightarrow} \cdots \longrightarrow \Map(\overline{G}^{\times n},kG)
\stackrel{\delta^n}{\longrightarrow} \cdots,$$ where the differential is
given by $$\delta^0(x)(g)=gx-xg  \quad (\mbox{for } x\in kG \mbox{ and }
g\in \overline{G})$$ and (for $\varphi: \overline{G}^{\times
n}\longrightarrow kG$ and $g_1,\cdots, g_{n+1}\in \overline{G}$)
$$\quad  \quad \quad \delta^n(\varphi)(g_{1,n+1})=g_1\varphi(g_{2,n+1})+\sum_{i=1}^{n}(-1)^i\varphi(g_{1,i-1},g_ig_{i+1},g_{i+2,n+1})+(-1)^{n+1}\varphi(g_{1,n})g_{n+1}.$$
Let $X$ be a complete set of
representatives of the conjugacy classes in the finite group $G$.
For $x\in X$, $C_x=\{gxg^{-1}|g\in G\}$ is the conjugacy class
corresponding to $x$ and $C_G(x)=\{g\in G|gxg^{-1}=x\}$ is the
centralizer subgroup. Now take a conjugacy class $C_x$ and define
$$\mathcal{H}_x^0=k[C_x],  \mbox{ and for }n\geq 1,$$
$$\mathcal{H}_x^n=\{\varphi: \overline{G}^{\times
n}\longrightarrow kG\ | \ \varphi(g_1,\cdots, g_{n})\in k[g_1\cdots
g_{n}C_x]\subset kG, \forall g_1,\cdots, g_n\in \overline{G}\},$$
where $g_1\cdots g_{n}C_x$ denotes the subset of $G$ obtained  by multiplying
$g_1\cdots g_{n}$ on $C_x$ and $k[g_1\cdots g_{n}C_x]$ is the
$k$-subspace of $kG$ spanned by this set. Note that we have $g_1\cdots g_{n}C_x=C_xg_1\cdots g_{n}$
 and $k[g_1\cdots g_{n}C_x]=k[C_xg_1\cdots g_{n}]$.
Let
$\mathcal{H}_x^*=\bigoplus_{n\geq 0}\mathcal{H}_x^n$. Cibils and
Solotar observed that
$\mathcal{H}_x^*$ is a subcomplex of $\mathcal{H}^*$ and
$\mathcal{H}^*=\bigoplus_{x\in X}\mathcal{H}_x^*$ (see \cite[Page 20, Proof of the theorem]{CS1997}).

\begin{Lem}\label{isomorphism-Hochschild-cohomology}  The complex $\mathcal{H}_x^*$ is  isomorphic to the complex  $$\Hom_{kC_G(x)}(\Barr_*(kG)\otimes_{kG}k,k),$$ which computes the
group cohomology $\mathrm H^*(C_G(x), k)$ of $C_G(x)$. More concretely, there is an isomorphism of complexes:
$$
\begin{array}{rcl}
\mathcal{H}_x^* &\rightarrow&
\Hom_{kC_G(x)}(\Barr_*(kG)\otimes_{kG}k,k),\\
(\varphi_x: \overline{G}^{\times n}\rightarrow kG)&\mapsto &(\widehat{\varphi}_x:
S_x\times \overline{G}^{\times n}\rightarrow k), \quad
\widehat{\varphi}_x(\gamma_{i, x}, g_{1,n})= a_{i, x}
\end{array}
$$
where we write $\varphi_x(g_{1, n})g_n^{-1}\cdots g_1^{-1}=\sum_{i=1}^{n_x} a_{i, x} x_i\in kC_x$; we fix a right coset decomposition of $C_G(x)$ in $G$: $$G=C_G(x)\gamma_{1, x}\cup \cdots \cup C_G(x)\gamma_{n_x, x}$$ and thus $C_x=\{\gamma_{1, x}^{-1}x\gamma_{1, x}, \cdots, \gamma_{n_x, x}^{-1}x\gamma_{n_x, x}\}$. Here  we write $x_i=\gamma_{i, x}^{-1}x\gamma_{i, x}$ and  $S_x=\{\gamma_{1, x}, \cdots, \gamma_{n_x, x}\}, $ and we take $\gamma_{1, x}=1$ and $x_1=x$.  The inverse is given by
$$
\begin{array}{rcl}
\Hom_{kC_G(x)}(\Barr_*(kG)\otimes_{kG}k,k) &\rightarrow&
\mathcal{H}_x^*,\\
(\widehat{\varphi}_x: S_x\times\overline{G}^{\times n}\rightarrow k)&\mapsto &(\varphi_x:
\overline{G}^{\times n}\rightarrow kG)), \
\varphi_x(g_{1,n})= \sum_{i=1}^{n_x}\widehat{\varphi}_x(\gamma_{i,x},g_{1, n})x_ig_1\cdots g_n.
\end{array}
$$
 Passing to the cohomology, we have
$H^*(\mathcal{H}_x^*)\simeq \mathrm H^*(C_G(x),k)$.
\end{Lem}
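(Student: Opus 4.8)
The plan is to take the two explicit formulas in the statement at face value — call them $\Phi\colon\mathcal H_x^*\to\Hom_{kC_G(x)}(\Barr_*(kG)\otimes_{kG}k,k)$, $\varphi_x\mapsto\widehat\varphi_x$, and $\Psi$, $\widehat\varphi_x\mapsto\varphi_x$, for its claimed inverse — and to check, in this order: (a) that the target genuinely computes $\mathrm H^*(C_G(x),k)$ and admits a concrete model; (b) that $\Phi$ and $\Psi$ are well defined and mutually inverse; (c) that $\Phi$ is a morphism of complexes. Only step (c) carries real content.

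For (a): the complex $P_*=\Barr_*(kG)\otimes_{kG}k$ is the standard resolution of the trivial $kG$-module $k$, and each $P_n\cong k[G\times\overline G^{\times n}]$ is free over $kG$ on the left $G$-coordinate. Restricting scalars along $kC_G(x)\hookrightarrow kG$, the fixed right coset decomposition $G=C_G(x)\gamma_{1,x}\cup\cdots\cup C_G(x)\gamma_{n_x,x}$ exhibits $S_x\times\overline G^{\times n}$ as a $kC_G(x)$-basis of $P_n$; restriction leaves exactness intact, so $P_*$ is a free $kC_G(x)$-resolution of $k$ and $\Hom_{kC_G(x)}(P_*,k)$ computes $\Ext^*_{kC_G(x)}(k,k)=\mathrm H^*(C_G(x),k)$. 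Evaluating a $kC_G(x)$-homomorphism on that basis identifies $\Hom_{kC_G(x)}(P_n,k)$ with $\Map(S_x\times\overline G^{\times n},k)$, the induced differential being precomposition with the bar differential $d_{n+1}$ of $P_*$; I would carry out the remaining steps in this model.

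For (b): if $\varphi_x\in\mathcal H_x^n$ then $\varphi_x(g_{1,n})g_n^{-1}\cdots g_1^{-1}\in k[C_x]$, and since $C_x=\{x_1,\dots,x_{n_x}\}$ is a $k$-basis of $k[C_x]$ the scalars $a_{i,x}$ are unambiguous, so $\widehat\varphi_x$ is well defined; conversely, for arbitrary $\widehat\varphi_x$ the element $\sum_i\widehat\varphi_x(\gamma_{i,x},g_{1,n})\,x_ig_1\cdots g_n$ lies in $k[C_xg_1\cdots g_n]=k[g_1\cdots g_nC_x]$ since $C_x$, being a conjugacy class, satisfies $yC_x=C_xy$ for all $y\in G$, so $\Psi(\widehat\varphi_x)\in\mathcal H_x^n$. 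That $\Phi$ and $\Psi$ invert one another is then a one-line substitution: the product $g_n^{-1}\cdots g_1^{-1}\cdot g_1\cdots g_n$ telescopes to $1$, and one reads off coefficients using once more that $\{x_i\}$ is a basis of $k[C_x]$.

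Step (c) is the heart of the matter: $\Phi$ must intertwine the restriction to $\mathcal H_x^*$ of the Hochschild differential $\delta^n$ (which is a subcomplex, as observed by Cibils and Solotar \cite{CS1997}) with the differential on $\Hom_{kC_G(x)}(P_*,k)$; equivalently, $\delta^n\circ\Psi=\Psi\circ\widehat\delta^n$. I would expand both $\delta^n(\Psi\widehat\varphi_x)(g_{1,n+1})$ and $\Psi(\widehat\delta^n\widehat\varphi_x)(g_{1,n+1})$ and compare, term by term, the coefficient of each basis vector $x_j\,g_1\cdots g_{n+1}$ of $k[g_1\cdots g_{n+1}C_x]$. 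The terms produced by collapsing two consecutive arguments, and the right-hand boundary term, match on the nose. The delicate one is the leading face: on the $\delta^n$ side it equals $\sum_j\widehat\varphi_x(\gamma_{j,x},g_{2,n+1})\,(g_1x_jg_1^{-1})\,g_1\cdots g_{n+1}$, so after reindexing via $x_m=g_1x_jg_1^{-1}$ its coefficient at $x_m\,g_1\cdots g_{n+1}$ is $\widehat\varphi_x(\gamma_{j(m),x},g_{2,n+1})$ with $x_{j(m)}=g_1^{-1}x_mg_1$; on the $\widehat\delta^n$ side the corresponding face of $d_{n+1}$ gives $\widehat\varphi_x(\gamma_{m,x}g_1,g_{2,n+1})$, and after writing $\gamma_{m,x}g_1=c\,\gamma_{l(m),x}$ with $c\in C_G(x)$ and using that $c$ acts trivially on $k$, its coefficient at $x_m\,g_1\cdots g_{n+1}$ is $\widehat\varphi_x(\gamma_{l(m),x},g_{2,n+1})$. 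So one needs $j(m)=l(m)$, and this holds because both $\gamma_{j(m),x}$ and $\gamma_{l(m),x}$ conjugate $x$ to the single element $(\gamma_{m,x}g_1)^{-1}x(\gamma_{m,x}g_1)$, while two elements of $G$ with this property differ by a left factor in $C_G(x)$ and hence coincide among the fixed representatives $S_x$. I expect this reconciliation of the two permutations of $C_G(x)$'s coset representatives — one induced by conjugation, the other by coset rewriting — to be the only genuine obstacle; the degree-zero case fits the same pattern, and the subcomplex property together with the decomposition $\mathcal H^*=\bigoplus_{x}\mathcal H_x^*$ is already recalled in the text.
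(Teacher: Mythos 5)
Your argument is correct: the identification of $\Barr_*(kG)\otimes_{kG}k$ as a free $kC_G(x)$-resolution of $k$ with basis $S_x\times\overline{G}^{\times n}$, the mutual-inverse check using that $C_x$ is a $k$-basis of $k[C_x]$ and $g_1\cdots g_nC_x=C_xg_1\cdots g_n$, and above all the reconciliation of the leading face are all sound; indeed, if $\gamma_{m,x}g_1=c\,\gamma_{l,x}$ with $c\in C_G(x)$ then $\gamma_{l,x}^{-1}x\gamma_{l,x}=(\gamma_{m,x}g_1)^{-1}x(\gamma_{m,x}g_1)=g_1^{-1}x_mg_1$, so $l$ is exactly the index $j(m)$ with $x_{j(m)}=g_1^{-1}x_mg_1$, because the right cosets of $C_G(x)$ are precisely the fibers of $\gamma\mapsto\gamma^{-1}x\gamma$. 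Your route, however, is not the one the paper takes: for Lemma \ref{isomorphism-Hochschild-cohomology} the paper gives no self-contained computation at all but cites the first five steps of \cite[Page 9]{LZ2015}, where the isomorphism is assembled from a chain of natural identifications rather than a term-by-term check --- one rewrites $\mathcal H^*$ as $\Hom_{kG}(\Barr_*(kG)\otimes_{kG}k,{_ckG})$ via $\varphi\mapsto\varphi(g_{1,n})g_n^{-1}\cdots g_1^{-1}$, decomposes the conjugation module ${_ckG}=\bigoplus_{x}kC_x$ so that $\mathcal H_x^*$ becomes $\Hom_{kG}(\Barr_*(kG)\otimes_{kG}k,kC_x)$, and then applies the Frobenius-reciprocity isomorphism $kC_x\simeq\Hom_{kC_G(x)}(kG,k)$ (the same identification displayed in the proof of Proposition \ref{isomorphism-of-complexes}); the explicit formulas in the statement are what that chain unwinds to. What you do differently is to take those formulas as the starting point and verify directly that they are mutually inverse chain maps, which makes the only nontrivial point (the interplay between the conjugation permutation of $C_x$ and the coset-rewriting permutation of $S_x$) completely explicit and keeps the proof elementary and self-contained; the paper's structural route buys an explanation of where the formulas come from, compatibility with the analogous homology statement, and reusability of the module-level identifications later on (Proposition \ref{isomorphism-of-complexes} and the comparison maps $\iota$, $\rho$, $s$), at the cost of deferring the details to \cite{LZ2015}.
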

\begin{proof}
This follows from the first five steps in \cite[Page 9]{LZ2015}.
\end{proof}

To compare the two complexes $\Hom_{kC_G(x)}(\Barr_*(kG)\otimes_{kG}k,k)$ and  $$C^*(C_G(x), k)=\Hom_{kC_G(x)}(\Barr_*(kC_G(x))\otimes_{kC_G(x)} k, k),$$ we need the following comparison maps defined in \cite[Page 16]{LZ2015}. The comparison map
$$\iota: \Barr_*(kC_G(x))\otimes_{kC_G(x)}k\rightarrow \Barr_*(kG)\otimes_{kG}k$$ is just defined as  the inclusion map
$\iota: k[C_G(x)\times \overline{C_G(x)}^{\times
n}]\hookrightarrow k[G\times \overline{G}^{\times n}].$
The comparison map $$\rho: \Barr_*(kG)\otimes_{kG}k\longrightarrow
\Barr_*(kC_G(x))\otimes_{kC_G(x)}k$$ is defined as follows:
$$
\begin{array}{rcl}
\rho_{-1}: k&\longrightarrow& k, \quad 1\longmapsto 1,\\
\rho_{0}: kG &\longrightarrow& kC_G(x), \quad h\gamma_{i,x}\longmapsto h,
\mbox{ for  }h\in  C_G(x),\\
\rho_{1}: k[G\times \overline{G}]&\longrightarrow& k[C_G(x)\times \overline{C_G(x)}], \quad (h\gamma_{i,x}, g_1)\longmapsto
(h, h_{i_1}),
\end{array}
$$
$$\mbox{ where $h\in C_{G}(x)$ and 
} \gamma_{i,x}g_1=h_{i_1}\gamma_{s^1_i,x}\mbox{ for }h_{i_1}\in
\overline{C_G(x)},$$
$$\rho_{n}: k[G\times \overline{G}^{\times n}]\longrightarrow k[C_G(x)\times
 \overline{C_G(x)}^{\times n}], \quad (h\gamma_{i,x}, g_{1, n})\longmapsto (h, h_{i_1}, \cdots,
h_{i_n}),$$ where $ h_{i_1},\cdots, h_{i_n}\in \overline{C_G(x)}$
 are determined by the sequence $\{g_1,\cdots, g_n\}$ as follows:
$$\gamma_{i,x}g_1=h_{i_1}\gamma_{s_i^1,x}, \quad \gamma_{s_i^1,x}g_2=h_{i_2}
\gamma_{s_i^2,x}, \quad \cdots,
 \quad \gamma_{s_i^{n-1},x}g_n=h_{i_n}\gamma_{s_i^n,x}.$$ 

\begin{Rem}\label{remark-homotopy-deformation}
Notice that $\rho\circ \iota=\id$. There is a homotopy $s: \Barr_*(kG)\otimes_{kG} k\rightarrow \Barr_*(kG)\otimes_{kG} k$ between $\id$ and $\iota\circ \rho$. For $(h\gamma_{i, x}, g_{1, n}) \in k[G\times \overline{G}^{\times n}]$, we define 
$$s(h\gamma_{i, x}, g_{1, n}):=(h, \gamma_{i, x}, g_{1, n})+\sum_{j=1}^n(-1)^j (h, h_{i_1}, \cdots, h_{i_j}, \gamma_{s_{i}^j, x}, g_{j+1}, \cdots, g_n),$$
 where $ h_{i_1},\cdots, h_{i_n}\in \overline{C_G(x)}$
 are determined by the sequence $\{g_1,\cdots, g_n\}$ as follows:
$$\gamma_{i,x}g_1=h_{i_1}\gamma_{s_i^1,x}, \quad \gamma_{s_i^1,x}g_2=h_{i_2}
\gamma_{s_i^2,x}, \quad \cdots,
 \quad \gamma_{s_i^{n-1},x}g_n=h_{i_n}\gamma_{s_i^n,x}.$$  By a straightforward computation, we get that $\id-\iota\circ \rho=(d\otimes_{kG}k)\circ s+s\circ (d\otimes_{kG}k),$ where  $d$ is the differential of $\Barr_*(kG)$ (cf. Section \ref{section2}). As a consequence, we get a homotopy deformation retract of complexes of (left) $kC_G(x)$-modules
 \begin{equation*}
 \xymatrix@C=0.000000001pc{
\ar@(lu,dl)_-{s}&\mathcal \Barr_*(kG)\otimes_{kG} k \ar@<0.5ex>[rrrrr]^-{\rho}&&&&&\bigoplus\limits_{x\in X} \Barr_*(kC_G(x))\otimes_{kC_G(x)}k. \ar@<0.5ex>[lllll]^-{\iota}}
\end{equation*}
That is, we have $\rho\circ \iota=\id$ and $ \id-\iota\circ\rho=(d\otimes_{kG}k)\circ s+s\circ (d\otimes_{kG}k).$ We remark that our construction of the homotopy $s$ is inspired from \cite[Definition 3.4]{HLL18}. 
\end{Rem} 

Applying the functor $\Hom_{kC_G(x)}(-, k)$ to the above homotopy deformation retract in Remark \ref{remark-homotopy-deformation} and then composing with the isomorphism in Lemma \ref{isomorphism-Hochschild-cohomology}, we obtain the following homotopy deformation retract of complexes  for any $x\in X$, \begin{equation*}\xymatrix@C=0.000000001pc{
\ar@(lu,dl)_-{s^x}&\mathcal H^*_x \ar@<0.5ex>[rrrrr]^-{\iota^x}&&&&& C^*(C_G(x), k).\ar@<0.5ex>[lllll]^-{\rho^x}}
\end{equation*}
Here the surjection $\iota^x$ is given by 
 \begin{equation*}
 \begin{split}
\iota^x: \mathcal{H}_x^* \rightarrow C^*(C_G(x), k), \quad 
[\varphi_x: \overline{G}^{\times n}\rightarrow kG]\mapsto [\widehat{\varphi}_x:
\overline{C_G(x)}^{\times n}\rightarrow k],
\end{split}
\end{equation*}
$$\mbox{with} \quad \widehat{\varphi}_x(h_{1,n})=a_{1,x}, \mbox{ where }\
\varphi_x(h_{1,n})h_{n}^{-1}\cdots
h_1^{-1}=\sum_{i=1}^{n_x}a_{i,x}x_i\in kC_x.$$ In other words,
 $\widehat{\varphi}_x(h_{1,n})$ is just the coefficient of
$x$ in $\varphi_x(h_{1,n})h_{n}^{-1}\cdots h_1^{-1}\in
kC_x$. 
The map $\rho^x$ is given   by
 \begin{equation*}
 \begin{split}
 \rho^x:  C^*(C_G(x), k)&\rightarrow \mathcal H_x^*,\quad 
[\widehat{\varphi}_x:
\overline{C_G(x)}^{\times n}\rightarrow k]\mapsto
[\varphi_x: \overline{G}^{\times n}\rightarrow kG], \end{split}
\end{equation*}
$$\mbox{with} \quad \varphi_x\in\mathcal{H}_x^n,\quad \mbox{and} \quad 
\varphi_x(g_{1,n})=\sum_{i=1}^{n_x}\widehat{\varphi}_x(h_{i_1},\cdots,
h_{i_n})x_ig_1\cdots g_n,$$
where $h_{i_1},\cdots, h_{i_n}\in \overline{C_G(x)}$  are determined by the sequence  $\{g_1,\cdots,
g_n\}$ as follows:
$$\gamma_{i,x}g_1=h_{i_1}\gamma_{s_i^1,x}, \quad \gamma_{s_i^1,x}g_2=h_{i_2}\gamma_{s_i^2,x}, \quad \cdots, \quad
 \gamma_{s_i^{n-1},x}g_n=h_{i_n}\gamma_{s_i^n,x}.$$
 The homotopy $s^x$ is given by: For $(\varphi_x: \overline{G}^{\times n} \rightarrow kG)\in \mathcal H_x^n$, we define $s^x(\varphi_x) \in \mathcal H_x^{n-1}$ as 
$$s^x(\varphi_x)(g_{1, n-1})=\sum_{j=0}^{n-1}\sum_{i=1}^{n_x} (-1)^ja^{1}_{i, j} x_i g_1\cdots g_{n-1},$$ 
where the coefficients $a_{i, j}^1$ are determined by the following identity (when $j=0$, we set $\gamma_{s_i^0, x}=\gamma_{i, x}$)
$$\varphi_x(h_{i_1}, \cdots, h_{i_j}, \gamma_{s_i^j, x}, g_{j+1}, \cdots,g_{n-1})g_{n-1}^{-1}\cdots g_1^{-1}\gamma_{i, x}^{-1}=\sum_{k=1}^{n_x} a_{i, j}^k x_k$$
since we have $h_{i_1}h_{i_2}\cdots h_{i_j}\gamma_{s_i^j, x}g_{j+1}\cdots g_{n-1}=\gamma_{i, x}g_1\cdots g_{n-1}$ for any $0\leq  j\leq n-1$.

Therefore,   we get a lifting of the additive decomposition of $\HH^*(kG,kG)$ at the complex level. 
 \begin{Thm}\label{realization-Hochschild-cohomology}  $($$\mathrm{cf}.$ \cite[Theorem 6.3]{LZ2015}$)$  Let $k$ be a field and $G$ a finite group. Consider the additive
decomposition of Hochschild cohomology algebra of the group algebra
$kG$:
$$\HH^*(kG,kG)\simeq \bigoplus_{x\in X}\mathrm H^*(C_G(x),k)$$
where $X$ is a set of representatives of conjugacy classes of elements of $G$ and $C_G(x)$ is the centralizer of $G$.
Then the above additive decomposition  lifts to a homotopy deformation retract of complexes 
\begin{equation*}\xymatrix@C=0.000000001pc{
\ar@(lu,dl)_-{s^*}&C^*(kG, kG) \ar@<0.5ex>[rrrrr]^-{\iota^*}&&&&&\bigoplus\limits_{x\in X}  C^*(C_G(x), k),\ar@<0.5ex>[lllll]^-{\rho^*}}
\end{equation*}
where $\iota^*=\sum_{x\in X} \iota^x,\ \rho^*=\sum_{x\in X} \rho^x$, and $s^*=\sum_{x\in X}s^x$. 
\end{Thm}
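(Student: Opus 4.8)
The plan is to deduce the statement by combining three facts already established above: the Cibils--Solotar decomposition $C^*(kG,kG)=\bigoplus_{x\in X}\mathcal H^*_x$ of the Hochschild cochain complex into subcomplexes indexed by conjugacy classes; the isomorphism of complexes $\mathcal H^*_x\xrightarrow{\sim}\Hom_{kC_G(x)}(\Barr_*(kG)\otimes_{kG}k,\,k)$ of Lemma~\ref{isomorphism-Hochschild-cohomology}; and the homotopy deformation retract of complexes of left $kC_G(x)$-modules of Remark~\ref{remark-homotopy-deformation}, given by a map $\rho\colon\Barr_*(kG)\otimes_{kG}k\to\Barr_*(kC_G(x))\otimes_{kC_G(x)}k$, a map $\iota$ in the opposite direction, and a degree-raising homotopy $s$, with $\rho\circ\iota=\id$ and $\id-\iota\circ\rho=(d\otimes_{kG}k)\circ s+s\circ(d\otimes_{kG}k)$. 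Since the existence of $\iota^*$, $\rho^*$ and the relations $\iota^*\circ\rho^*=\id$ and ``$\id-\rho^*\circ\iota^*$ is null-homotopic'' are exactly \cite[Theorem~6.3]{LZ2015}, the only genuinely new content is the explicit homotopy $s^*$; so the task reduces to producing $s^*$ and identifying it with the displayed formula.

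First I would fix $x\in X$ and apply the contravariant additive functor $\Hom_{kC_G(x)}(-,k)$ to the homotopy deformation retract of Remark~\ref{remark-homotopy-deformation}. An additive functor carries null-homotopic maps to null-homotopic ones; a contravariant one in addition interchanges the two structure maps of the retract and reverses composition, so $\rho\circ\iota=\id$ becomes $\Hom(\iota,k)\circ\Hom(\rho,k)=\id$, and applying $\Hom(-,k)$ to the homotopy identity yields
$$\id-\Hom(\rho,k)\circ\Hom(\iota,k)=\Hom(s,k)\circ\delta+\delta\circ\Hom(s,k),$$
where $\delta$ denotes the differential of the pertinent $\Hom$-complex and $\Hom(s,k)$ now lowers degree by one. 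Composing with the isomorphism of Lemma~\ref{isomorphism-Hochschild-cohomology} on the source side and using $\Hom_{kC_G(x)}(\Barr_*(kC_G(x))\otimes_{kC_G(x)}k,k)=C^*(C_G(x),k)$ on the other, one gets a homotopy deformation retract $(\iota^x,\rho^x,s^x)$ between $\mathcal H^*_x$ and $C^*(C_G(x),k)$ with $s^x$ being $\Hom(s,k)$ transported through Lemma~\ref{isomorphism-Hochschild-cohomology}. Summing over $x\in X$ and using the Cibils--Solotar decomposition then produces the triple $(\iota^*,\rho^*,s^*)=\sum_{x\in X}(\iota^x,\rho^x,s^x)$ and the identities $\iota^*\circ\rho^*=\id$ and $\id-\rho^*\circ\iota^*=\delta^*\circ s^*+s^*\circ\delta^*$.

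What remains, and is the only laborious point, is to check that the maps produced by this abstract procedure agree with the explicit formulas for $\iota^x$, $\rho^x$ and $s^x$ displayed just before the theorem. For $\iota^x$ and $\rho^x$ this is the bookkeeping already carried out in \cite[Pages 9 and 16]{LZ2015}: one pushes the inclusion $\iota$ and the retraction $\rho$ of Remark~\ref{remark-homotopy-deformation} through the explicit isomorphism and its inverse in Lemma~\ref{isomorphism-Hochschild-cohomology}, tracking the coset relations $\gamma_{i,x}g_1=h_{i_1}\gamma_{s^1_i,x}$, $\gamma_{s^1_i,x}g_2=h_{i_2}\gamma_{s^2_i,x},\dots$ and the ``coefficient of $x$'' rule of that lemma. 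For $s^x$ the analogous unwinding is: given $\varphi_x\in\mathcal H^n_x$, rewrite it via Lemma~\ref{isomorphism-Hochschild-cohomology} as $\widehat\varphi_x\in\Hom_{kC_G(x)}(\Barr_n(kG)\otimes_{kG}k,k)$, precompose with the degree-$(n-1)$ component of $s$, and translate back; the telescoping sum defining $s(h\gamma_{i,x},g_{1,n-1})$ becomes the alternating sum over $0\le j\le n-1$ in the stated formula, with coefficients $a^1_{i,j}$ read off from $\varphi_x(h_{i_1},\dots,h_{i_j},\gamma_{s^j_i,x},g_{j+1},\dots,g_{n-1})\,g_{n-1}^{-1}\cdots g_1^{-1}\gamma_{i,x}^{-1}\in kC_x$ exactly as Lemma~\ref{isomorphism-Hochschild-cohomology} prescribes. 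I expect this last identification---the combinatorics of right cosets together with the relations $h_{i_1}\cdots h_{i_j}\gamma_{s^j_i,x}g_{j+1}\cdots g_{n-1}=\gamma_{i,x}g_1\cdots g_{n-1}$---to be the main, if entirely routine, obstacle; the remainder is a formal consequence of functoriality and of passing to direct sums.
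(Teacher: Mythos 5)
Your proposal is correct and follows essentially the same route as the paper: the paper also obtains the triple $(\iota^x,\rho^x,s^x)$ by applying $\Hom_{kC_G(x)}(-,k)$ to the homotopy deformation retract of Remark \ref{remark-homotopy-deformation}, transporting through the isomorphism of Lemma \ref{isomorphism-Hochschild-cohomology}, and then summing over $x\in X$ via the Cibils--Solotar decomposition, with the explicit formulas (in particular for the new homotopy $s^x$) read off exactly as you describe.
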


Notice that the homotopy $s^*$ in the above theorem is induced from the homotopy $s$ in Remark \ref{remark-homotopy-deformation}, which is not contained in \cite[Theorem 6.3]{LZ2015}.  

\bigskip
{\it The second case.} This is just the usual additive decomposition of the Hochschild homology $\HH_*(kG,kG)$ (except in degree zero component). We use a similar idea as in the first case to give a lifting of $\HH_*(kG,kG)$ to the complex level.

Recall from Section \ref{section2} that the Hochschild homology $\HH_*(kG,kG)$ of the group algebra $kG$ can
be computed by the Hochschild chain complex $C_*(kG, kG)$:
$$(\mathcal{H}_*)\quad \quad \quad \quad \cdots \longrightarrow k[G\times\overline{G}^{\times s}]\stackrel{\partial_s}{\longrightarrow} \cdots \longrightarrow k[G\times\overline{G}]\stackrel{\partial_1}{\longrightarrow} kG\longrightarrow 0,$$ where the differential is
given by $$\partial_s(g_0,g_{1,s})=(g_0g_1,g_{2,s})+\sum_{i=1}^{s-1}(-1)^i(g_0,g_{1,i-1},g_ig_{i+1},g_{i+2,s})+(-1)^{s}(g_sg_0,g_{1,s-1}),$$
$$\partial_1(g_0,g_1)=g_0g_1-g_1g_0  \quad (\mbox{for } g_0\in G \mbox{ and }
g_1\in \overline{G}).$$
We denote
$$\mathcal{H}_{x,0}=k[C_x],  \mbox{ and for }s\geq 1,$$
$$\mathcal{H}_{x,s}=k[(g_s^{-1}\cdots g_1^{-1}u,g_{1,s}) \ | \ u\in C_x, g_1,\cdots, g_s\in \overline{G}].$$
Let
$\mathcal{H}_{x,*}=\bigoplus_{s\geq 0}\mathcal{H}_{x,s}$. It is easy to verify that $\mathcal{H}_{x,*}$ is a subcomplex of $\mathcal{H}_*$ and $\mathcal{H}_{*}=\bigoplus_{x\in X}\mathcal{H}_{x,*}$.
\begin{Rem}
We obtain this decomposition of $C_*(kG, kG)=\mathcal H_*$ motivated from Cibils-Solatar's decomposition of $\mathcal H^*$, but this decomposition has already appeared in  \cite[7.4.4 Proposition]{Lod}. In fact, the complex $k[\Gamma_*(G, x)]$ in \cite[7.4.4 Proposition]{Lod},  which is constructed as certain  subcyclic set of the cyclic bar construction,   coincides with  $\mathcal H_{x, *}$. We thank an anonymous referee for pointing out this to us.
\end{Rem}

\begin{Lem}\label{isomorphism-Hochschild-homology} The complex $\mathcal{H}_{x,*}$ is  isomorphic to  $k\otimes_{kC_G(x)}\Barr_*(kG)\otimes_{kG}k$, which computes the group homology $\mathrm H_*(C_G(x), k)$ of $C_G(x)$. More concretely, there is an isomorphism of complexes:
$$
\begin{array}{rcl}\mathcal{H}_{x,s}&\longrightarrow&
k\otimes_{kC_G(x)}\Barr_s(kG)\otimes_{kG}k\simeq k\otimes_{kC_G(x)}k[G\times \overline{G}^{\times s}],\\
(g_s^{-1}\cdots g_1^{-1}g_0^{-1}xg_0,g_{1,s})&\longmapsto& 1\otimes_{kC_G(x)}g_{0,s},
\end{array}
$$
 and its inverse is given by
 $$
\begin{array}{rcl}
k\otimes_{kC_G(x)}k[G\times  \overline{G}^{\times s}]&\longrightarrow&
\mathcal{H}_{x,s},\\
1\otimes_{kC_G(x)}g_{0,s}&\longmapsto& (g_s^{-1}\cdots g_1^{-1}g_0^{-1}xg_0,g_{1,s}).
\end{array}
$$
\end{Lem}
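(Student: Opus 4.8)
The plan is to verify by hand that the two displayed maps are mutually inverse isomorphisms of complexes, and then to identify the homology of the right-hand side with $\mathrm H_*(C_G(x),k)$ by a change-of-rings argument.

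First I would observe that $k\otimes_{kC_G(x)}\Barr_*(kG)\otimes_{kG}k$ computes $\mathrm H_*(C_G(x),k)$. Indeed, by Remark \ref{Remark: contructing resolutions from bar resolution} the complex $\Barr_*(kG)\otimes_{kG}k$ is a free resolution of the trivial left $kG$-module $k$; since $kG$ is free as a $kC_G(x)$-module (via the right coset decomposition $G=\bigcup_i C_G(x)\gamma_{i,x}$), each $\Barr_n(kG)\otimes_{kG}k$ restricts to a free $kC_G(x)$-module, so $\Barr_*(kG)\otimes_{kG}k$ is also a free resolution of $k$ over $kC_G(x)$. Hence $H_*\bigl(k\otimes_{kC_G(x)}\Barr_*(kG)\otimes_{kG}k\bigr)\simeq\Tor^{kC_G(x)}_*(k,k)=\mathrm H_*(C_G(x),k)$.

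Next I would check the two maps are well-defined and mutually inverse. For $u\in C_x$ the element $g_0\in G$ with $u=g_0^{-1}xg_0$ is determined only up to left multiplication by $C_G(x)$; since $1\otimes_{kC_G(x)}(hg_0,g_{1,s})=1\otimes_{kC_G(x)}(g_0,g_{1,s})$ for $h\in C_G(x)$, the forward map does not depend on this choice, and $(hg_0)^{-1}x(hg_0)=g_0^{-1}xg_0$ gives the same for the backward map. On $k$-bases the forward map sends the basis vector of $\mathcal H_{x,s}$ indexed by $(u,g_{1,s})$ to the basis vector of $k\otimes_{kC_G(x)}k[G\times\overline G^{\times s}]$ indexed by the pair $(C_G(x)g_0,\,g_{1,s})$, and $u\mapsto C_G(x)g_0$ is a bijection $C_x\xrightarrow{\sim}C_G(x)\backslash G$ (from $g^{-1}xg=g_0^{-1}xg_0\iff C_G(x)g=C_G(x)g_0$); so the forward map is a bijection on bases whose inverse is the backward map, and both are graded isomorphisms.

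The one step that needs care is showing the forward map is a chain map. Writing a basis vector of $\mathcal H_{x,s}$ as $(g_s^{-1}\cdots g_1^{-1}g_0^{-1}xg_0,g_{1,s})$ and applying $\partial_s$, I would check that each face again lies in $\mathcal H_{x,s-1}$ with the predicted data: the leading face gives the vector indexed by $\bigl((g_0g_1)^{-1}x(g_0g_1),g_{2,s}\bigr)$; an interior face $g_ig_{i+1}$ gives the vector indexed by $\bigl(g_0^{-1}xg_0,(g_{1,i-1},g_ig_{i+1},g_{i+2,s})\bigr)$, using that the product $g_1\cdots g_{i-1}(g_ig_{i+1})g_{i+2}\cdots g_s$ equals $g_1\cdots g_s$; and in the last face the prefactor telescopes, $g_s\cdot g_s^{-1}\cdots g_1^{-1}g_0^{-1}xg_0=(g_1\cdots g_{s-1})^{-1}g_0^{-1}xg_0$, giving the vector indexed by $\bigl(g_0^{-1}xg_0,g_{1,s-1}\bigr)$. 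Under the forward map these are precisely the leading, interior and last summands of $1\otimes_{kC_G(x)}\overline d_s(g_{0,s})$, where $\overline d_s$ is the differential induced by $d_*$ on $\Barr_*(kG)\otimes_{kG}k$; the degree-one differential is treated the same way, via $\partial_1(g_1^{-1}g_0^{-1}xg_0,g_1)=(g_0g_1)^{-1}x(g_0g_1)-g_0^{-1}xg_0$. Combining the three steps yields the isomorphism of complexes and hence $H_*(\mathcal H_{x,*})\simeq\mathrm H_*(C_G(x),k)$. The only genuinely fiddly point is the bookkeeping with the telescoping prefactors $g_s^{-1}\cdots g_1^{-1}$ in the last face; everything else is formal.
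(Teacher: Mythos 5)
Your argument is correct and follows essentially the same route as the paper's proof: the paper likewise writes down the induced differential on $k\otimes_{kC_G(x)}k[G\times\overline{G}^{\times s}]$, declares the compatibility of the stated maps with the differentials "straightforward to check", and concludes via the fact that $\Barr_*(kG)\otimes_{kG}k$ is a projective resolution of $k$ over $kC_G(x)$, exactly your change-of-rings step. You have merely written out the details (well-definedness over the choice of $g_0$, the bijection $C_x\simeq C_G(x)\backslash G$ on bases, and the face-by-face chain-map check) that the paper leaves implicit.
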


\begin{proof} The differential in the complex $\mathcal{H}_{x,*}$ is induced from $\mathcal{H}_*$, while the differential in the complex $k\otimes_{kC_G(x)}k[G\times \overline{G}^{\times s}]$ is given by $$1\otimes_{kC_G(x)}g_{0,s}\longmapsto 1\otimes_{kC_G(x)}((g_0g_1,g_{2,s})+\sum_{i=1}^{s-1}(-1)^i(g_0,g_{1,i-1},g_ig_{i+1},g_{i+2,s})+(-1)^{s}g_{0,s-1}).$$
It is straightforward to check that the given maps commute with the above differentials. Passing to the homology, we have
$H_*(\mathcal{H}_{x,*})\simeq \mathrm H_*(C_G(x),k)= \Tor_*^{kC_G(x)}(k,
k)$ since $\Barr_*(kG)\otimes_{kG}k$ is a projective resolution of $k$ as $kC_{G}(x)$-modules.
\end{proof}

 Applying the functor $k\otimes_{kC_G(x)}-$ to the homotopy deformation retract in Remark \ref{remark-homotopy-deformation} and then composing with the isomorphism in Lemma \ref{isomorphism-Hochschild-homology}, we obtain the following homotopy deformation retract for any $x\in X$, 
 \begin{equation*}\xymatrix@C=0.000000001pc{
\ar@(lu,dl)_-{s_x}&\mathcal H_{x, *} \ar@<0.5ex>[rrrrr]^-{\rho_x}&&&&& C_*(C_G(x), k).\ar@<0.5ex>[lllll]^-{\iota_x}}
\end{equation*}
Here the injection $\iota_x$ is given by 
\begin{equation*} 
 \begin{split}
 \iota_x:  C_*(C_G(x),k)\stackrel{\sim}{\longrightarrow} &\mathcal H_{x, *},\\
[\widehat{\alpha}_x=(h_1, \cdots, h_n)\in k[\overline{C_G(x)}^{\times n}]\longmapsto &[\alpha_x=(h_n^{-1}\cdots h_1^{-1}x,h_{1,n})\in\mathcal{H}_{x,n}].
\end{split}
\end{equation*}
and the surjection $\rho_x$ is given by 
\begin{equation*}
 \begin{split}
 \rho_x: \mathcal H_{x, *} & \rightarrow C_*(C_G(x), k)\\
[\alpha_x=(g_n^{-1}\cdots g_1^{-1}g_0xg_0^{-1},g_{1,n})\in\mathcal{H}_{x,n}]&\longmapsto  [\widehat{\alpha}_x=(h_{i_1}, \cdots, h_{i_n})\in k[\overline{C_G(x)}^{\times n}]]\end{split}
\end{equation*}
where $h_{i_1},\cdots, h_{i_n}\in \overline{C_G(x)}$ are determined by the following sequence:
$$g_0=h\gamma_{i, x}, \quad \gamma_{i,x}g_1=h_{i_1}\gamma_{s_i^1,x}, \quad \gamma_{s_i^1,x}g_2=h_{i_2}\gamma_{s_i^2,x}, \quad \cdots, \quad
 \gamma_{s_i^{n-1},x}g_n=h_{i_n}\gamma_{s_i^n,x}.$$
 The homotopy $s_x$ is given as follows: For $\alpha_x=(g_n^{-1}\cdots g_1^{-1}g_0^{-1}xg_0,g_{1,n})\in\mathcal{H}_{x,n}$, $$
s_x(\alpha_x)=\sum_{j=0} ^n(-1)^j(g_n^{-1}\cdots g_1^{-1} g_0^{-1}x h, h_{i_1}, \cdots, h_{i_j}, \gamma_{s_i^j, x}, g_{j+1}, \cdots, g_n)\in \mathcal H_{x, n+1}, $$ 
when $j=0$, we set $\gamma_{s_i^0, x}=\gamma_{i, x}.$

Therefore,  we get  a lifting of the additive decomposition of $\HH_*(kG, kG)$ at the complex level.
 \begin{Thm}\label{realization-Hochschild-homology}  Let $k$ be a field and $G$ a finite group. Consider the additive
decomposition of Hochschild homology of the group algebra
$kG$:
$$\HH_*(kG,kG)\simeq \bigoplus_{x\in X}\mathrm H_*(C_G(x),k)$$
where $X$ is a set of representatives of conjugacy classes of elements of $G$ and $C_G(x)$ is the centralizer of $G$. 
Then, the above additive decomposition lifts to a homotopy deformation retract of complexes
\begin{equation*}\xymatrix@C=0.000000001pc{
\ar@(lu,dl)_-{s_*}&C_*(kG, kG) =\bigoplus\limits_{x\in X}\mathcal H_{x, *}\ar@<0.5ex>[rrrrr]^-{\rho_*}&&&&&\bigoplus\limits_{x\in X}  C_*(C_G(x), k).\ar@<0.5ex>[lllll]^-{\iota_*}}
\end{equation*}
where $\iota_*=\sum_{x\in X} \iota_x,\  \rho_*=\sum_{x\in X} \rho_x$ and $s_*=\sum_{x\in X} s_x$.  
\end{Thm}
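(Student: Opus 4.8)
The plan is to reduce the statement, one conjugacy class at a time, to the homotopy deformation retract of complexes of left $kC_G(x)$-modules recorded in Remark \ref{remark-homotopy-deformation}, and then to push it forward along the additive functor $k\otimes_{kC_G(x)}(-)$, which automatically carries a homotopy deformation retract to a homotopy deformation retract (an additive functor preserves chain maps, chain homotopies and identity maps). This is the exact homological counterpart of the way Theorem \ref{realization-Hochschild-cohomology} is obtained, with $\Hom_{kC_G(x)}(-,k)$ replaced throughout by $k\otimes_{kC_G(x)}(-)$.

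First I would verify the combinatorial decomposition already mentioned before Lemma \ref{isomorphism-Hochschild-homology}: every basis element $(g_0,g_{1,s})$ of $C_s(kG,kG)$ lies in exactly one of the subspaces $\mathcal H_{x,s}$, namely the one indexed by the conjugacy class of $g_1g_2\cdots g_sg_0$; and each of the three types of terms occurring in $\partial_s(g_0,g_{1,s})$ has the same product $g_1g_2\cdots g_sg_0$ (up to conjugation by $g_1$ in the leading term). Hence $\mathcal H_{x,*}$ is a subcomplex of $C_*(kG,kG)$, we have $C_*(kG,kG)=\bigoplus_{x\in X}\mathcal H_{x,*}$ as complexes, and the differential of $C_*(kG,kG)$ is the direct sum of the differentials of the $\mathcal H_{x,*}$.

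Next, fix $x\in X$. Applying $k\otimes_{kC_G(x)}(-)$ to the homotopy deformation retract of Remark \ref{remark-homotopy-deformation} -- legitimate because all three maps $\iota$, $\rho$, $s$ there are $kC_G(x)$-linear -- yields a homotopy deformation retract between $k\otimes_{kC_G(x)}\bigl(\Barr_*(kG)\otimes_{kG}k\bigr)$ and $k\otimes_{kC_G(x)}\bigl(\Barr_*(kC_G(x))\otimes_{kC_G(x)}k\bigr)$. The latter complex is, by definition, the group homology complex $C_*(C_G(x),k)$, while the former is identified with $\mathcal H_{x,*}$ through the isomorphism of Lemma \ref{isomorphism-Hochschild-homology}. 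Transporting along these two identifications produces a homotopy deformation retract $(\rho_x,\iota_x,s_x)$ between $\mathcal H_{x,*}$ and $C_*(C_G(x),k)$, i.e. $\rho_x\circ\iota_x=\id$ and $\id-\iota_x\circ\rho_x=\partial\circ s_x+s_x\circ\partial$; and chasing a basis element through the maps $\iota$, $\rho$, $s$ and through the identifications $k\otimes_{kC_G(x)}\bigl(\Barr_*(kC_G(x))\otimes_{kC_G(x)}k\bigr)=C_*(C_G(x),k)$ and $k\otimes_{kC_G(x)}\bigl(\Barr_*(kG)\otimes_{kG}k\bigr)\cong\mathcal H_{x,*}$, one recovers verbatim the closed formulas for $\iota_x$, $\rho_x$ and $s_x$ displayed above.

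Finally I would assemble the summands: taking the direct sum over $x\in X$ of the retracts $(\rho_x,\iota_x,s_x)$ and using $C_*(kG,kG)=\bigoplus_{x\in X}\mathcal H_{x,*}$ (with matching differentials), we obtain the homotopy deformation retract $(\rho_*,\iota_*,s_*)=\bigl(\sum_x\rho_x,\ \sum_x\iota_x,\ \sum_x s_x\bigr)$ between $C_*(kG,kG)$ and $\bigoplus_{x\in X}C_*(C_G(x),k)$. Passing to homology, $\rho_*$ and $\iota_*$ become mutually inverse isomorphisms realizing the additive decomposition $\HH_*(kG,kG)\simeq\bigoplus_{x\in X}\mathrm H_*(C_G(x),k)$, so the triple is a genuine lift of it. The only laborious -- but entirely mechanical -- step is the last chase: one must check, syllable by syllable, that the homotopy produced abstractly coincides with the stated formula for $s_x$, which uses repeatedly the rewriting relations $\gamma_{s_i^{j-1},x}g_j=h_{i_j}\gamma_{s_i^j,x}$ associated with the coset decomposition $G=\bigcup_i C_G(x)\gamma_{i,x}$; no idea beyond those already present in the proof of Theorem \ref{realization-Hochschild-cohomology} is required.
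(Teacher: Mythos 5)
Your proposal is correct and follows the paper's own route exactly: the paper also obtains the theorem by applying $k\otimes_{kC_G(x)}(-)$ to the $kC_G(x)$-linear homotopy deformation retract of Remark \ref{remark-homotopy-deformation}, identifying the two resulting complexes via Lemma \ref{isomorphism-Hochschild-homology} and the definition of $C_*(C_G(x),k)$, and then summing over $x\in X$ using the decomposition $C_*(kG,kG)=\bigoplus_{x\in X}\mathcal H_{x,*}$. The basis-element chase you defer at the end is precisely what the paper records as the explicit formulas for $\iota_x$, $\rho_x$ and $s_x$, so no further ideas are needed.
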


\bigskip
{\it The third case.} Recall from Proposition \ref{singularHochschild-selfinjective}  that we have the following exact sequence
$$0\rightarrow \widehat{\HH}^{-1}(kG,kG)\rightarrow \HH_0(kG,kG) \stackrel{\tau}{\rightarrow}  \HH^0(kG,kG)\rightarrow \widehat{\HH}^0(kG,kG)\rightarrow 0.$$
By Remark \ref{remark-stable-hochschild}, the map $\tau$ is lifted to
$$\tau: C_0(kG, kG)=kG \rightarrow C^0(kG, kG)=kG, \ \ h\mapsto \sum_{g\in G} ghg^{-1}. $$
From the above two cases, it follows that the additive decompositions: $$\HH_0(kG,kG)\simeq \bigoplus_{x\in X}\mathrm H_0(C_G(x),k), \quad \HH^0(kG,kG)\simeq \bigoplus_{x\in X}\mathrm H^0(C_G(x),k)$$
are lifted to
\begin{equation}\label{equation-C0}
\begin{array}{rcl}
kG=C_0(kG, kG)\simeq \bigoplus_{x\in X}k[C_x] &\xrightarrow{} &\bigoplus_{x\in X} C_0(C_G(x), k)=\bigoplus_{x\in X}k_x, \\
x_i \in k[C_x] & \mapsto & 1_x \\
x & \mapsfrom & 1_x
\end{array}\end{equation} and
\begin{equation}\label{equation-C1}
\begin{array}{rcl}
kG=C^0(kG, kG)=\bigoplus_{x\in X} k[C_x] &\xrightarrow{} &\bigoplus_{x\in X} C^0(C_G(x), k)=\bigoplus_{x\in X} k_x.\\
x \in k[C_x] & \mapsto&1_x\in k_x\\
x_i \in k[C_x] & \mapsto & 0 \ \ \mbox{if $x_i\neq x$}\\
 \sum_{i=1}^{n_x}x_i&\mapsfrom & 1_x\in k_x
\end{array}
\end{equation}
where $k_x$ is the one-dimensional vector space indicated by $x$ and $1_x$ the unit of $k_x$.  For each $x\in X$, we also have an
exact sequence
$$0\rightarrow \widehat{\mathrm H}^{-1}(C_G(x),k)\rightarrow \mathrm H_{0}(C_G(x),k) \stackrel{\alpha_x}\rightarrow  \mathrm H^0(C_G(x),k)\rightarrow \widehat{\mathrm H}^{0}(C_G(x),k)\rightarrow 0,$$
where the map $\alpha_x$ is lifted to the map
$$\alpha_x: C_0(C_G(x), k) =k\rightarrow C^0(C_G(x), k)=k,\quad 1\mapsto |C_G(x)|.$$
Note that we have the following commutative diagram
\begin{equation*}
\xymatrix@R=1.3pc{
C_0(C_G(x), k)\ar@{_(->}[d] \ar[r]^-{\alpha_x} & C^0(C_G(x), k)\ar@{_(->}[d]\\
C_0(kG, kG) \ar[r]^-{\tau} & C^0(kG, kG)
}
\end{equation*}
where the vertical injections are defined in (\ref{equation-C0}) and (\ref{equation-C1}), respectively. This implies  that the restriction of the trace map $\tau$ to $C_0(C_G(x), k)$ is $\alpha_x$ for any $x\in X$. Thus we have the additive decompositions
$$\widehat{\HH}^{-1}(kG,kG)\simeq \bigoplus_{x\in X}\widehat{\mathrm H}^{-1}(C_G(x),k), \quad \widehat{\HH}^0(kG,kG)\simeq \bigoplus_{x\in X}\widehat{\mathrm H}^0(C_G(x),k).$$

 \bigskip
As a conclusion of the above three cases,   we get the following additive decomposition \begin{equation}\label{equation-tate-additive}\widehat{\HH}^*(kG, kG) \simeq  \bigoplus_{x\in X}\widehat{\mathrm H}^*(C_G(x), k).\end{equation}
Since we note that the trace map $\tau$ restricts to $\tau_x: \mathcal H_{x, 0}\to \mathcal H_x^{0}$ for any $x\in X$, we get a subcomplex: $$\widehat{\mathcal H}_x^*: \quad \cdots \rightarrow \mathcal H_{x, 1}\rightarrow \mathcal H_{x, 0}\xrightarrow{\tau_x}\mathcal H_x^0\rightarrow \mathcal H_x^1\rightarrow \cdots$$  of $\mathcal D^*(kG, kG)$.  It is clear  that $\mathcal D^*(kG, kG)= \bigoplus_{x\in X} \widehat{\mathcal H}_x^*$ as complexes.

\begin{Rem}\label{remark-deformation-retract2}
By  Theorems \ref{realization-Hochschild-cohomology} and \ref{realization-Hochschild-homology}, we obtain 
a homotopy deformation retract \begin{equation*}
 \xymatrix@C=0.000000001pc{
\ar@(lu,dl)_-{\widehat s}&\mathcal D^*(kG, kG) \ar@<0.5ex>[rrrrr]^-{\widehat{\rho}}&&&&&\bigoplus\limits_{x\in X} \widehat{ C}^*(C_G(x), k), \ar@<0.5ex>[lllll]^-{\widehat{\iota}}}
\end{equation*}
where, for $m\geq 0$, we have  $$\widehat{\iota}^{m}=\rho^m, \ \widehat{\iota}^{-m-1}=\iota_m; \quad \widehat{\rho}^m=\iota^m,\  \widehat{\rho}^{-m-1}=\rho_{m}; \quad 
\widehat{s}^m=s^m,\  \widehat{s}^{-m-1}=s_{m}.$$ 
This homotopy deformation retract should play a crucial role in the future study of the behavior of  the higher algebraic structures on $\mathcal D^*(kG, kG)$ in terms of the additive decomposition. 

Taking $x=1$, we obtain a split  inclusion of complexes $ \widehat{\iota}_{x=1}: \widehat{C}^*(G, k)\hookrightarrow \mathcal D^*(kG, kG)$ given as follows: 
$$\begin{array}{rcl}
C^*(G, k)&\hookrightarrow& C^*(kG, kG),\\
(\varphi: \overline{G}^{\times n}\longrightarrow k)&\longmapsto& (\psi: \overline{G}^{\times n}\longrightarrow kG), \quad \psi(g_{1,n})=\varphi(g_{1,n})g_1\cdots g_{n};\\
C_*(G, k)&\longrightarrow& C_*(kG, kG),\\
(g_{1,n}) &\longmapsto& (g_n^{-1}\cdots g_1^{-1}x,g_{1,n}).\end{array}$$
In particular, this induces an inclusion $\widehat{\iota}_{x=1}:\widehat{\mathrm H}^*(G, k)\hookrightarrow \widehat{\HH}^*(kG, kG)$. 
\end{Rem}

We define a left $kG$-module ${_ckG}$ as follows. As a vector space
${_ckG}=kG$ and the action of $G$ on $kG$ is given by conjugation: $g\cdot x=gxg^{-1}$ for any $g\in G$ and $x\in kG$. Note that we have a $kG$-module decomposition ${_ckG} = \bigoplus_{x\in X}kC_x$, where $C_x$ denotes the conjugacy class of $x$. 

\begin{Prop}\label{isomorphism-of-complexes}
We have an isomorphism of complexes $\rho: \mathcal{D}^*(kG, kG)\rightarrow \widehat{C}^*(G, {_ckG})$. As a result, we can present the isomorphisms in the additive decomposition  as follows: $$\widehat{\HH}^*(kG, kG) \simeq \widehat{\mathrm H}^*(G, {_{c}kG}) \simeq \bigoplus_{x\in X}\widehat{\mathrm H}^*(G, kC_x) \simeq \bigoplus_{x\in X}\widehat{\mathrm H}^*(C_G(x), k).$$
\end{Prop}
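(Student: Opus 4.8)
The plan is to construct the isomorphism $\rho\colon \mathcal{D}^*(kG,kG)\to \widehat{C}^*(G,{_ckG})$ degree by degree and then check compatibility with the differentials, exploiting the fact that both complexes have already been given very explicit descriptions earlier in the paper. On the nonnegative side ($n\geq 0$), recall that $\mathcal{D}^n(kG,kG)=C^n(kG,kG)=\Map(\overline{G}^{\times n},kG)$ and, by Section \ref{section3}, $\widehat{C}^n(G,{_ckG})=C^n(G,{_ckG})=\Map(\overline{G}^{\times n},{_ckG})$. Since ${_ckG}$ and $kG$ coincide as vector spaces, the two terms are literally equal; one only has to verify that the Hochschild cochain differential $\delta^n$ (with the bimodule action $g\cdot x = gxg^{-1}$, i.e. the conjugation action entering ${_ckG}$) matches the group cochain differential of $C^*(G,{_ckG})$. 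This is a direct comparison of the two displayed formulas: the term $g_1\varphi(g_{2,n+1})$ in the group differential is replaced by $g_1\cdot\varphi(g_{2,n+1}) = g_1\varphi(g_{2,n+1})g_1^{-1}$, and the last term $(-1)^{n+1}\varphi(g_{1,n})$ becomes $(-1)^{n+1}g_1\cdots g_n\cdot\varphi(g_{1,n})$ under the appropriate normalization — the point being that $\rho$ in nonnegative degrees should not be the identity but rather the twist $\varphi\mapsto (g_{1,n}\mapsto \varphi(g_{1,n})g_1\cdots g_n)$, exactly as the map $\widehat{\iota}_{x=1}$ of Remark \ref{remark-deformation-retract2} suggests for the $x=1$ block, now applied uniformly. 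I would define $\rho^n$ by this twist and check it intertwines $\delta^n_{\mathrm{HH}}$ with the group-cohomology differential of $C^*(G,{_ckG})$; bijectivity is immediate since multiplication by $g_1\cdots g_n$ is invertible.

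On the negative side ($n=-m-1\leq -1$), we have $\mathcal{D}^n(kG,kG)=C_m(kG,kG)=k[G\times\overline{G}^{\times m}]$, while by the computation in Section \ref{subsection-Tate-cohomology}, $\widehat{C}^n(G,{_ckG})\cong {_ckG}\otimes k[\overline{G}^{\times m}] \cong k[G\times\overline{G}^{\times m}]$, where the first factor carries the right $kG$-module structure $ug=g^{-1}u$ coming from the conjugation module, i.e. $u\cdot g = g^{-1}ug$. Again the underlying vector spaces agree, and I would define $\rho^{-m-1}$ on basis elements by $(g_0,g_{1,m})\mapsto (g_0 g_1\cdots g_m)\otimes (g_{1,m})$ or a similar twist, chosen so that the Hochschild chain differential $\partial_m$ — including its cyclic last term $(-1)^m(g_mg_0,g_{1,m-1})$ — is carried to the group-homology differential of $C_*(G,{_ckG})$ recorded in Section \ref{subsection-Tate-cohomology} (where the last term is $(-1)^s(x,g_{1,s-1})$ with $x$ acted on by $g_s$ via the right action). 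The cyclic/conjugation term is precisely where the conjugation module structure of ${_ckG}$ is needed, so I expect that matching this term is the one place requiring care. Finally, at the junction in degree $-1$, one must check that $\rho$ identifies the trace map $\tau\colon kG\to kG$, $h\mapsto\sum_{g}ghg^{-1}$, with the norm map $\delta_{-1}\colon {_ckG}\to {_ckG}$, $u\mapsto (\sum_g g)u = \sum_g g\cdot u = \sum_g gug^{-1}$ — which it does, essentially on the nose, once the twists on either side are accounted for in degree $0$ and degree $-1$.

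Having established that $\rho$ is an isomorphism of complexes, the chain of identifications in the "as a result" clause is then formal: applying $H^*$ gives $\widehat{\HH}^*(kG,kG)\cong\widehat{\mathrm H}^*(G,{_ckG})$; the $kG$-module decomposition ${_ckG}=\bigoplus_{x\in X}kC_x$ (additivity of Tate cohomology in the coefficient module) gives $\widehat{\mathrm H}^*(G,{_ckG})\cong\bigoplus_{x\in X}\widehat{\mathrm H}^*(G,kC_x)$; and finally $kC_x\cong kG\otimes_{kC_G(x)}k = \mathrm{Ind}_{C_G(x)}^G k$ as $kG$-modules, so Eckmann–Shapiro for Tate cohomology (which holds since $kG$ is free, hence projective, over $kC_G(x)$) yields $\widehat{\mathrm H}^*(G,kC_x)\cong\widehat{\mathrm H}^*(C_G(x),k)$. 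This recovers the additive decomposition \eqref{equation-tate-additive}, now realized concretely through the single isomorphism $\rho$. The main obstacle, as noted, is purely bookkeeping: pinning down the exact twists in each degree (including the half-integer "glue" at $n=-1,0$) so that all three differentials — $\partial_*$, $\tau$, and $\delta^*$ — transport correctly and simultaneously; once the twist is chosen consistently with $\widehat{\iota}_{x=1}$, every verification is a short index-chasing computation.
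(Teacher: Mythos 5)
Your proposal follows essentially the same route as the paper: the paper defines $\rho$ in nonnegative degrees by $\phi\mapsto\bigl(g_{1,m}\mapsto\phi(g_{1,m})g_m^{-1}\cdots g_1^{-1}\bigr)$ and in negative degrees by $(h,g_{1,m})\mapsto(hg_1\cdots g_m,\,g_{1,m})$, checks this is an isomorphism of complexes (with the explicit inverse), and then deduces the displayed chain of isomorphisms from the decomposition ${}_ckG=\bigoplus_{x\in X}kC_x$ together with the complex-level Eckmann--Shapiro identifications induced by $\Hom_{kC_G(x)}(kG,k)\simeq kC_x\simeq k\otimes_{kC_G(x)}kG$ --- exactly your plan, including the coefficient decomposition and Shapiro step. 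The one slip to fix is that the twist you write down in nonnegative degrees, $\varphi\mapsto\varphi(\cdot)\,g_1\cdots g_n$, is the formula for the \emph{inverse} map (i.e. for $\widehat{\iota}_{x=1}$, which goes from the Tate cochain side into $\mathcal D^*(kG,kG)$), so the map you call $\rho$ should instead multiply by $g_n^{-1}\cdots g_1^{-1}$; your guessed formula on the negative side agrees with the paper's on the nose.
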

\begin{proof} (Compare to \cite[Remark 6.2]{LZ2015})
Let us construct the morphism of complexes
$\rho: \mathcal{D}^*(kG, kG)\rightarrow \widehat{C}^*(G, {_ckG})$ as follows.
For $m\geq 0$ and $\phi\in \mathcal{D}^m(kG, kG)\simeq \Map(\overline{G}^{\times m},kG)$, we define $$\rho(\phi)\in \widehat{C}^m(G, {_ckG})\simeq \Map(\overline{G}^{\times m},kG),\quad (g_{1, m})\mapsto\phi(g_{1, m})g_m^{-1}\cdots g_1^{-1}.$$  In fact, for each $x\in X$, $\rho$ restricts to an isomorphism $\rho_x: \mathcal{H}_x^m\xrightarrow{\simeq} \Map(\overline{G}^{\times m},kC_x).$ Similarly, for $m\geq 0$ and $(h, g_{1,m})\in
\mathcal{D}^{-m-1}(kG, kG)\simeq k[G\times\overline{G}^{\times {m}}]$, we define $$\rho((h, g_{1,m}))=(hg_1\cdots g_{m}, g_{1, m})\in \widehat{C}^{-m-1}(G, {_ckG})\simeq k[G\times  \overline{G}^{\times {m}}].$$
In fact, for each $x\in X$, $\rho$ restricts to an isomorphism $\rho_x: \mathcal{H}_{x,m}\xrightarrow{\simeq} k[C_x\times \overline{G}^{\times m}]$. It is easy to check that $\rho$ is a morphism of complexes. Note that $\rho$ is an isomorphism with inverse $\rho^{-1}$   given by $$\rho^{-1}(\psi)(g_{1, m}) =\psi(g_{1, m})g_1\cdots g_m,\quad \mbox{for any $\psi\in \widehat{C}^m(G, {_ckG});$}$$ $$\rho^{-1}((h, g_{1,m}))=(hg_{m}^{-1}\cdots g_1^{-1}, g_{1,m}), \quad \mbox{for any $(h, g_{1, m})\in \widehat{C}^{-m-1}(G, {_ckG})$}$$ for $m\geq 0$. Thus the isomorphism $\rho$ induces the first two isomorphisms in this proposition.  The third isomorphism in the proposition follows from the following isomorphisms of complexes
$$kC_x\otimes_{kG}\Barr_*(kG)\otimes_{kG}k \simeq k\otimes_{kC_G(x)} \Barr_*(kG)\otimes_{kG}k, $$
$$\Hom_{kG}(\Barr_*(kG)\otimes_{kG} k, kC_x)\simeq \Hom_{kC_G(x)}(\Barr_*(kG)\otimes_{kG}k, k).$$ We remark that the above two isomorphisms  are induced by  $$\Hom_{kC_G(x)}(kG, k)\simeq kC_x\simeq k\otimes_{kC_G(x)} kG, \quad (\gamma_{j, x}\mapsto \delta_{i, j}) \mapsfrom \gamma_{i, x}^{-1}x \gamma_{i, x}\mapsto 1\otimes_{kC_G(x)}\gamma_{i, x}.$$  Here we fix a right coset decomposition of $C_G(x)$ in $G$: $G=C_G(x)\gamma_{1, x}\cup \cdots \cup C_G(x)\gamma_{n_x, x}$ and thus $C_x=\{\gamma_{1, x}^{-1}x\gamma_{1, x}, \cdots, \gamma_{n_x, x}^{-1}x\gamma_{n_x, x}\}$. This proves the proposition.
\end{proof}

\begin{Rem}\label{duality-and-additive-decomposition}
Note that the non-degenerate
bilinear pairing on $\mathcal{D}^*(kG, kG)$ (cf. Remark \ref{duality-in-Tate-Hochschild-cohomology}) induces  non-degenerate
bilinear pairings (for each $n\in \mathbb Z$):
$$\langle\cdot, \cdot\rangle: \widehat{\mathcal H}_x^n \times \widehat{\mathcal H}_{y}^{-n-1}\rightarrow k \quad \mbox{for any $x, y \in X$ such that $x^{-1}\in C_y$}.$$
In particular,  we  have 
$\langle \alpha, \beta\rangle=0$ for any $\alpha \in \widehat{\mathcal H}_x^*$ and $\beta \in \widehat{\mathcal H}_y^* \ (x, y\in X)$ such that $x^{-1}\notin C_{y}$. 
\end{Rem}

Recall that we have a cyclic $A_{\infty}$-algebra structure $(\langle\cdot, \cdot\rangle, \partial', \cup, m_3, m_i=0 \ (i>3))$ on $\mathcal D^*(kG, kG)$ (see Section \ref{section2}). Notice that the Tate cochain complex $\widehat C^*(G, k)$ can be seen as a subcomplex of  $\mathcal D^*(kG, kG)$ under the inclusion $\widehat{\iota}_{x=1}: \widehat{C}^*(G,k)\hookrightarrow \mathcal D^*(kG, kG)$ (cf.  Remark \ref{remark-deformation-retract2}). 
\begin{Thm}\label{theorem-A-infinity}
The Tate cochain complex $\widehat C^*(G, k)$ is a cyclic $A_{\infty}$-subalgebra of $\mathcal D^*(kG, kG)$,  and moreover  $\widehat{\mathcal H}^*_x$ is an $A_{\infty}$-module of $\widehat{C}^*(G, k)$ for each $x\in X$, under the  decomposition $\mathcal D^*(kG, kG)= \bigoplus_{x\in X}\widehat{\mathcal H}^*_x$. 
\end{Thm}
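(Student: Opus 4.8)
The plan is to identify $\widehat C^*(G,k)$ with the direct summand $\widehat{\mathcal H}^*_1$ of $\mathcal D^*(kG,kG)$ via the split inclusion $\widehat\iota_{x=1}$ of Remark~\ref{remark-deformation-retract2}. Since $C_1=\{1\}$, $n_1=1$ and $\gamma_{1,1}=1$, this identifies $\widehat C^n(G,k)$ with $\mathcal H^n_1$ for $n\ge 0$ and $\widehat C^{-m-1}(G,k)$ with $\mathcal H_{1,m}$ for $m\ge 0$, as complexes. Under this identification the theorem splits into three claims: (a) the operations $m_1=\partial'$, $m_2=\cup$ and $m_3$ of the cyclic $A_\infty$-structure on $\mathcal D^*(kG,kG)$ preserve $\widehat{\mathcal H}^*_1$; (b) the cyclic pairing $\langle\cdot,\cdot\rangle$ restricts to a non-degenerate pairing on $\widehat{\mathcal H}^*_1$; and (c) for each $x\in X$, the operations $m_2$ and $m_3$ with exactly one argument in $\widehat{\mathcal H}^*_x$ and the remaining ones in $\widehat{\mathcal H}^*_1$ take values in $\widehat{\mathcal H}^*_x$. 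Granting (a) and (c), the $A_\infty$-subalgebra axioms for $\widehat C^*(G,k)$ and the $A_\infty$-module axioms (in fact $A_\infty$-bimodule axioms) for $\widehat{\mathcal H}^*_x$ over $\widehat C^*(G,k)$ are obtained simply by reading the Stasheff identities of $\mathcal D^*(kG,kG)$ on the relevant tuples, since every term in such an identity stays inside $\widehat{\mathcal H}^*_1$ (resp. inside $\widehat{\mathcal H}^*_x$); no further computation is needed for the axioms.

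Claim (b) is immediate: the symmetry of $\langle\cdot,\cdot\rangle$ and the cyclicity identity~(\ref{equation-cyclic-pairing}) pass to any subspace, and by Remark~\ref{duality-and-additive-decomposition} the pairing restricts to a non-degenerate pairing $\widehat{\mathcal H}^n_x\times\widehat{\mathcal H}^{-n-1}_y\to k$ exactly when $x^{-1}\in C_y$, and vanishes otherwise. For $x=y=1$ one has $1^{-1}=1\in C_1$, so the restriction to $\widehat{\mathcal H}^*_1$ is non-degenerate, and for $y\ne 1$ one has $C_y\ne\{1\}$, so $\widehat{\mathcal H}^*_1$ is orthogonal to every other summand.

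It therefore remains to prove (a) and (c), which I would unify into the single closure statement: $\widehat{\mathcal H}^*_1\cup\widehat{\mathcal H}^*_x\subseteq\widehat{\mathcal H}^*_x$ and $\widehat{\mathcal H}^*_x\cup\widehat{\mathcal H}^*_1\subseteq\widehat{\mathcal H}^*_x$, together with the analogous containments for $m_3$ whenever exactly one argument lies in $\widehat{\mathcal H}^*_x$ and the others in $\widehat{\mathcal H}^*_1$ (the case $x=1$ giving (a)). This is checked directly from the explicit formulas, using that $\widehat{\mathcal H}^*_x$ is cut out by the support conditions $\varphi(g_{1,n})\in k[g_1\cdots g_nC_x]$ in non-negative degrees and $\alpha=(g_s^{-1}\cdots g_1^{-1}u,g_{1,s})$ with $u\in C_x$ in negative degrees, and that $\{1\}\cdot C_x=C_x=C_x\cdot\{1\}$. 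For instance, in Case~2 of Definition~\ref{Def-cup-product}, when $\beta\in\widehat{\mathcal H}^*_1$ one has $h_0=h_t^{-1}\cdots h_1^{-1}$, and then each summand $(gh_0,h_{1,t},g^{-1}g_0,g_{1,s})$ of $\alpha\cup\beta$ satisfies the support condition for $\mathcal H_{x,s+t+1}$ because the associated ``tail times head'' word in $G$ is a conjugate of $(g_1\cdots g_s)g_0\in C_x$; the symmetric computation handles $\alpha\in\widehat{\mathcal H}^*_1$, and in the $m_3$-formula~(iv) the telescoping product $(g_j\cdots g_r)\,g_0\,(g_1\cdots g_{j-1})$ absorbs all chain entries against $g_0$, leaving a single conjugate of the relevant element of $C_x$ trapped between the factors $g$ and $g^{-1}$, so each term again lies in the correct summand. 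The remaining cases of the cup product and of $m_3$ are handled by the same word manipulation, clauses (i)--(iii) of the $m_3$-formula being vacuous since those instances of $m_3$ vanish.

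I expect this case-by-case verification to be the only genuine obstacle: it is elementary but has to be carried out for all six cases of the cup product and all clauses of the $m_3$-formula, with care about the summations over $G$ and about which arguments are chains and which are cochains. As a cleaner alternative that makes the closure transparent, one may transport the whole $A_\infty$-structure along the isomorphism $\rho\colon\mathcal D^*(kG,kG)\xrightarrow{\ \sim\ }\widehat C^*(G,{_ckG})$ of Proposition~\ref{isomorphism-of-complexes}: the multiplication of $kG$ is equivariant for the conjugation action, so ${_ckG}$ is a $G$-algebra, $k={_ckG}\cdot 1=kC_1$ is a unital subalgebra of it, and each $kC_x$ is a module over that subalgebra; the subalgebra and module statements then follow at once. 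I would present whichever of the two versions turns out to be shorter.
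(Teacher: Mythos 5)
Your proposal is correct and follows essentially the same route as the paper: identify $\widehat C^*(G,k)$ with the summand $\widehat{\mathcal H}^*_1$ via the split inclusion of Remark \ref{remark-deformation-retract2} and Proposition \ref{isomorphism-of-complexes}, then check from the support conditions that $\cup$ and the two nontrivial clauses of $m_3$ preserve $\widehat{\mathcal H}^*_1$ and send mixed tuples with one entry in $\widehat{\mathcal H}^*_x$ into $\widehat{\mathcal H}^*_x$, the cyclic structure being inherited from the restricted pairing (Remark \ref{duality-and-additive-decomposition}). Your representative computations (Case 2 of the cup product and clause (iv) of the $m_3$-formula, via conjugation-invariance of $C_x$) are exactly the kind of verification the paper carries out, at a comparable level of detail.
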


\begin{proof}
By Proposition \ref{isomorphism-of-complexes}, we have an isomorphism of complexes
$$\widehat{\iota}_{x=1}=\rho^{-1}|_{x=1}: \widehat C^*(G, k)\xrightarrow{\simeq}\widehat{\mathcal H}_1^*\ (\subset \mathcal D^*(kG, kG)).$$ 
For any $\alpha\in \widehat{\mathcal H}_1^m$ and $\beta\in \widehat{\mathcal H}_x^n\ (x\in X)$, by the definition of $\cup$ in Section \ref{section2}, it is easy  to check that
$$\alpha\cup \beta, \ \beta\cup \alpha \in \widehat{\mathcal H}_x^{m+n}.$$ 
This shows that $\cup$ on $\mathcal D^*(kG, kG)$ restricts to $\widehat{\mathcal H}_1^*$ and $\widehat{\mathcal H}_x^*$ is a module of $\widehat{C}^*(G, k)$. 
It remains to verify that 
$m_3(\alpha, \beta, \gamma)\in \widehat{\mathcal H}^*_x$ for $\alpha, \beta\in \widehat{\mathcal H}_1^*$ and $\gamma\in \widehat{\mathcal H}^*_x$. Recall from Section \ref{section2}, we only need to consider  the two nontrivial cases for $m_3$.   The first case is as follows. Let $\phi \in \mathcal H^m_{1}, \alpha=(g_0, g_{1, r})\in \mathcal H_{1, r}$ and $ \varphi \in \mathcal H^n_x$ for $r+2\leq m+n$.
We have 
\begin{eqnarray*}
\lefteqn{
m_3(\phi, \alpha, \varphi)(h_1, \cdots, h_{m-r+n-2})=}\\
&&\sum_{g\in G}\sum_{j=1}^{\min\{m, n, r\}}(-1)^{m+r+j-1} \phi(h_{1, m-r+j-2},  g, g_{j, r})g_0
\varphi(g_{1, j-1}, g^{-1}, h_{m-r+j-1, m-r+n-2}). \\
\end{eqnarray*}
Since we have that  $\phi(y_1, \cdots, y_m)\in k[y_1y_2\cdots y_m]$ for any $y_i \in G \ (1\leq i\leq m)$, $g_0g_1\cdots g_r=1$, and $\varphi(y_1, \cdots, y_n)\in k[y_1y_2\cdots y_nC_x]$ for any $y_i \in G \ (1\leq i\leq n)$,  $$\phi(h_{1, m-r+j},  g, g_{j, r})g_0
\varphi(g_{1, j-1}, g^{-1}, h_{m-r+j+1, m-r+n})\in k[h_1\cdots h_{m-r+n}C_x].$$ Thus $m_3(\phi, \alpha, \psi)\in \mathcal H_x^{m-r+n}.$ The second case can be verified in a similar way.  
\end{proof}
\begin{Cor}\label{cor-abelian-A}
Let $G$ be a finite abelian group. Then we have an isomorphism $$\mathcal D^*(kG, kG)\simeq kG\otimes \widehat{C}^*(G, k)$$ as cyclic $A_{\infty}$-algebras.
\end{Cor}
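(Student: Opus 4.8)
The plan is to combine Proposition \ref{isomorphism-of-complexes} with the fact that for a finite abelian group $G$ every conjugacy class is a singleton, $C_x=\{x\}$, and $C_G(x)=G$ for all $x\in G$. Under these hypotheses the additive decomposition index set $X$ may be taken to be all of $G$, each summand $\widehat{\mathcal H}^*_x$ is isomorphic (via $\rho^{-1}$ of Proposition \ref{isomorphism-of-complexes}) to $\widehat C^*(G,k)$, and the decomposition $\mathcal D^*(kG,kG)=\bigoplus_{x\in G}\widehat{\mathcal H}^*_x$ becomes, after these identifications, precisely $kG\otimes\widehat C^*(G,k)$ as graded complexes, with the factor $kx\subset kG$ indexing the $x$-summand. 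So the content to verify is that this identification of complexes is compatible with the cyclic $A_\infty$-structure $(\langle\cdot,\cdot\rangle,\partial',\cup,m_3)$ transported from $\mathcal D^*(kG,kG)$ and with the obvious tensor-product cyclic $A_\infty$-structure on $kG\otimes\widehat C^*(G,k)$ (where $kG$ carries its commutative algebra structure with symmetrizing form $\langle g,h\rangle=\delta_{gh,1}$, and $\widehat C^*(G,k)$ carries the cyclic $A_\infty$-structure it inherits as a subalgebra by Theorem \ref{theorem-A-infinity}).

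The key steps, in order: First, I would record that for abelian $G$ one has $\widehat{\iota}_{x=1}\colon \widehat C^*(G,k)\xrightarrow{\sim}\widehat{\mathcal H}^*_1$ from Theorem \ref{theorem-A-infinity}, and more generally $\rho^{-1}$ restricted to the $x$-summand gives an isomorphism of complexes $\widehat C^*(G,k)\xrightarrow{\sim}\widehat{\mathcal H}^*_x$ for every $x$; assembling these over $x\in G$ and using $C_x=\{x\}$, $\rho^{-1}$ itself becomes an isomorphism of complexes $kG\otimes\widehat C^*(G,k)\xrightarrow{\sim}\mathcal D^*(kG,kG)$, $x\otimes\psi\mapsto \rho^{-1}_x(\psi)\in\widehat{\mathcal H}^*_x$. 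Second, I would check multiplicativity on the level of $m_2=\cup$: using the six-case formula of Definition \ref{Def-cup-product}, for $\psi\in\widehat C^*(G,k)$ in the $x$-summand and $\psi'$ in the $y$-summand, a direct inspection shows $\rho^{-1}_x(\psi)\cup\rho^{-1}_y(\psi')=\rho^{-1}_{xy}(\psi\cup_{\widehat C}\psi')$, which is exactly the product rule $(x\otimes\psi)\cup(y\otimes\psi')=(xy)\otimes(\psi\cup_{\widehat C}\psi')$ on $kG\otimes\widehat C^*(G,k)$ — here one uses crucially that in the abelian case the sums $\sum_{g\in G}$ appearing in Cases 2, 4, 6 collapse because conjugation is trivial, so no "smearing" across summands occurs and the group-element bookkeeping just multiplies. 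Third, the analogous check for $m_3$ using the explicit formulas from Section \ref{section2}: again the $\sum_{g\in G}$ collapses (or the relevant degree constraint forces vanishing in a way that matches the tensor $m_3$, which is built only from the $\widehat C^*(G,k)$ piece since $kG$ has trivial higher products), giving $\rho^{-1}_x(-)$-equivariance of $m_3$. Fourth, compatibility of pairings: the bilinear form on $\mathcal D^*(kG,kG)$ restricted to $\widehat{\mathcal H}^n_x\times\widehat{\mathcal H}^{-n-1}_y$ is nonzero only if $x^{-1}=y$ (Remark \ref{duality-and-additive-decomposition}, with $C_y=\{y\}$), and on that pairing it reproduces $\langle x,y\rangle_{kG}$ times the Tate cochain pairing on $\widehat C^*(G,k)$ — which is exactly the tensor-product pairing. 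Finally, assembling these facts gives that $\rho^{-1}$ is an isomorphism of cyclic $A_\infty$-algebras.

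The main obstacle I anticipate is bookkeeping in the $m_3$ computation of Case (iv)/(v): the formulas carry a summation over $g\in G$, a second summation over $j$, and several interleaved blocks of group elements, so verifying that after the abelian collapse the result lands in a single summand $\widehat{\mathcal H}^*_x$ \emph{and} equals the tensor-product $m_3$ (rather than merely being \emph{supported} there) requires carefully matching the arguments and the output group-element of $\varphi$ and $\phi$ — this is essentially the computation already carried out in the proof of Theorem \ref{theorem-A-infinity}, specialized to $C_G(x)=G$, so I would organize the argument to quote that proof and only add the identification of the resulting element with the tensor formula. A secondary (but routine) point is to make explicit that the symmetrizing form on $kG$ and the cyclic structure constants match signs; since all the sign conventions are fixed in Section \ref{section2} and the identification $\rho^{-1}$ is degree-preserving, no sign subtleties beyond those already handled in Lemma \ref{lemma-pairing-compatibility} and equation (\ref{equation-cyclic-pairing}) should arise.
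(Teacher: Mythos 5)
Your proposal is correct and follows essentially the same route as the paper: both start from the isomorphism of Proposition \ref{isomorphism-of-complexes} (which for abelian $G$ identifies $\mathcal D^*(kG,kG)$ with $kG\otimes \widehat{C}^*(G,k)$ as complexes, summand $\widehat{\mathcal H}^*_x \leftrightarrow kx\otimes\widehat{C}^*(G,k)$) and then verify compatibility with $\cup$, with $m_3$ (via the support computation as in Theorem \ref{theorem-A-infinity} plus the cyclicity relation (\ref{equation-cyclic-pairing}) and Remark \ref{duality-and-additive-decomposition}, exactly as you anticipate), and with the bilinear form. One minor caveat: the sums $\sum_{g\in G}$ in Cases 2, 4, 6 do not literally collapse in the abelian case (they merely reindex, e.g.\ via $g\mapsto xg$, or concentrate on a single group element with a summed coefficient), but this does not affect your key identity $(x\otimes\psi)\cup(y\otimes\psi')=(xy)\otimes(\psi\cup\psi')$, so the argument goes through.
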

\begin{Rem}The cyclic $A_{\infty}$-algebra structure $(\langle\cdot,\cdot\rangle', m_1',m_2', \cdots)$ on $kG\otimes \widehat{C}^*(G, k)$ is defined as follows.
$$\langle g_1\otimes \alpha_1, g_2\otimes \alpha_2\rangle'=\langle g_1, g_2\rangle\langle \alpha_1, \alpha_2\rangle,$$
$$m'_p((g_1\otimes \alpha_1), \cdots, (g_p\otimes \alpha_p))=g_1\cdots g_p\otimes m_p(\alpha_1, \cdots, \alpha_p)$$
for any $g_i\otimes \alpha_i\in kG\otimes \widehat{C}^*(G, k)\ (i=1, \cdots, p).$

\end{Rem}
\begin{proof}[Proof of Corollary \ref{cor-abelian-A}]
From Proposition \ref{isomorphism-of-complexes}, we have an isomorphism $\rho: \mathcal D^*(kG, kG)\xrightarrow{\simeq} kG\otimes \widehat C^*(G, k)$, which induces an isomorphism  $\rho_x: \widehat{\mathcal H}^*_x\xrightarrow{\simeq} kx\otimes \widehat{C}^*(G, k)$ for any $x\in G$.   It is easy to check that  $\rho$ respects the cup product. Let us prove that $\rho$ respects the product $m_3$ as well. We need to check the following two cases. For the first case,  let $\phi \in \mathcal H^m_{x}, \ \psi \in \mathcal H^n_y$ and $\alpha=(g_0, g_1, \cdots, g_r)\in \mathcal H_{z, r}$ for $x, y, z\in G$ and $m+n\geq r$. By the above isomorphisms $\rho_x\ (x\in X)$, we have that $\phi=x \overline{\phi}, \ \psi=y\overline{\psi}$  for some $\overline{\phi}\in\mathcal H_{1}^m, \overline{\psi}\in \mathcal H_{1}^n$, and  $g_0\cdots g_r=z.$ Thus
\begin{equation*}
\begin{split}
&m_3(\phi, \alpha, \psi)(h_1, \cdots, h_{m-r+n})=\\
&\sum_{g\in G}\sum_{j=1}^{\min\{m, n, r\}}(-1)^{\kappa_{ij}} \phi(h_{1, m-r+j},  g, g_{j, r})g_0
\psi(g_{1, j-1}, g^{-1}, h_{m-r+j+1, m-r+n})\in k[h_1\cdots h_{m-r+n}C_{xyz}].
\end{split}
\end{equation*}
This implies that $m_3(\phi, \alpha, \psi)\in \mathcal H_{xyz}^{m-r+n}$. For the second case, let $\alpha\in \mathcal H_{x, r}, \ \beta\in \mathcal H_{y, s}$ and $\phi\in \mathcal H_{z}^m$ for $m-1\leq r+s$. By Equation (\ref{equation-cyclic-pairing}) , we have
$\langle m_3(\alpha, \phi, \beta), \psi\rangle=\langle \alpha,  m_3(\phi, \beta, \psi)\rangle$ for any $\psi\in \widehat{\mathcal H}_{w}^{*}\  (w\in X)$. It follows from Remark \ref{duality-and-additive-decomposition} that $m_3(\alpha, \phi, \beta)\in \widehat{\mathcal H}^*_{xyz}$ since we have $m_3(\phi, \beta, \psi)\in \widehat{\mathcal H}_{yzw}^*$ by the first case.  This proves the corollary.
\end{proof}

\bigskip

\section{The cup product formula and a new proof via Green functors}\label{section-cup-product}

In this section, we describe Nguyen's cup product formula for the Tate-Hochschild cohomology algebra $\widehat{\HH}^*(kG,kG)$ in terms of the additive decomposition and  provide a new proof via Green functors, following Bouc.

 \subsection{The cup product formula}
We shall state the cup product formula at the cohomology level analogous to the result of  Siegel-Witherspoon \cite{SW1999}. In fact, this has been done by Nguyen in \cite{Nguyen2012}.

  Let $X=\{g_1=1,g_2,\cdots, g_r\}$ be a complete set of representatives of conjugacy classes of
elements of $G$ and $H_i:=C_G(g_i)$ is the centralizer subgroup of $G$.
Let  $\gamma_i:\widehat{\mathrm H}^*(H_i,
 k)\to \widehat{\HH}^*(kG,kG) $  be the split injection appearing in  the additive decomposition in  Proposition \ref{isomorphism-of-complexes}
$$\widehat{\HH}^*(kG,kG)\simeq \mathrm{\widehat{\Hu}}^*(G,{_ckG}) \simeq \bigoplus_{g_i\in X} \mathrm{\widehat{\Hu}}^*(H_i,k).$$

We now state the cup product formula in terms of the above  additive decomposition of $\widehat{\HH}^*(kG,kG)$. For $i,j\in \{1,\cdots,r\}$, let $D$ be a set of double coset representatives for $H_i\backslash G/H_j$. Recall that for each $x\in D$, there is a unique $k=k(x)$ such that $g_k={}^yg_i{}^{yx}g_j$ for some $y\in G$.
\begin{Thm} \label{cup-product-formula}  $($\cite[Theorem 5.5]{Nguyen2012}$)$ Let $\alpha\in \widehat{\mathrm H}^*(H_i,k)$, $\beta\in \widehat{\mathrm H}^*(H_j,k)$. Then
$$\gamma_i(\alpha)\cup \gamma_j(\beta)=\sum_{x\in D}\gamma_k(cor^{H_k}_{W}(res^{{{}^y\!H_i}}_{W}y^*(\alpha)\cup res^{{}^{yx}\! H_j}_{W}{(yx)}^*(\beta))),$$
where $W=W(x)={}^{yx}\!H_j\bigcap {}^y\!H_i$.
\end{Thm}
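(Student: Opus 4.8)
The plan is to deduce the formula from the general description of the cup product on $\widehat{\HH}^*(kG,kG) \simeq \widehat{\mathrm H}^*(G,{}_ckG)$ furnished by Proposition \ref{isomorphism-of-complexes}, together with the Mackey-style decomposition of an induced/restricted module. Under that isomorphism, the cup product on $\widehat{\HH}^*(kG,kG)$ is identified with the Yoneda product in $\DD_{\sg}((kG)^e)$, equivalently with the cup product on $\widehat{\mathrm H}^*(G,{}_ckG)$ induced by the diagonal $\Delta\colon kG \to kG \to kG\otimes kG$, $x\mapsto x\otimes x$ on coefficients (conjugation action), composed with multiplication ${}_ckG \otimes {}_ckG \to {}_ckG$. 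The summand indexed by $g_i\in X$ corresponds, via the isomorphism $\widehat{\mathrm H}^*(G,kC_{g_i}) \simeq \widehat{\mathrm H}^*(H_i,k)$ of Proposition \ref{isomorphism-of-complexes}, to the Tate cohomology of the centralizer with trivial coefficients, and the splitting $\gamma_i$ is exactly the composite $\widehat{\mathrm H}^*(H_i,k)\xrightarrow{\sim}\widehat{\mathrm H}^*(G,kC_{g_i})\hookrightarrow\widehat{\mathrm H}^*(G,{}_ckG)$. The key observation is that $kC_{g_i}\simeq \mathrm{Ind}_{H_i}^{G}k$ as $kG$-modules (conjugation action), so Shapiro's lemma gives $\widehat{\mathrm H}^*(G,kC_{g_i})\simeq\widehat{\mathrm H}^*(H_i,k)$ and, more importantly, the product of two such classes is governed by the tensor product $\mathrm{Ind}_{H_i}^G k \otimes \mathrm{Ind}_{H_j}^G k$ inside ${}_ckG$.

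First I would make precise the module-level statement: the multiplication map ${}_ckG\otimes{}_ckG\to{}_ckG$ restricted to $kC_{g_i}\otimes kC_{g_j}$ lands in $\bigoplus_{k} kC_{g_k}$, and its components are indexed exactly by pairs $(u,v)$ with $u\in C_{g_i}$, $v\in C_{g_j}$, $uv$ conjugate to $g_k$. Decomposing $C_{g_i}\times C_{g_j}$ into $G$-orbits under simultaneous conjugation recovers the double coset set $D=H_i\backslash G/H_j$: choosing $y$ so that $yg_iy^{-1}=g_i$ (i.e. $y\in H_i$, representatives normalized as in the statement) and tracking the stabilizer of the pair $({}^yg_i,{}^{yx}g_j)$ shows the stabilizer is precisely $W=W(x)={}^{yx}H_j\cap{}^yH_i$. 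Thus as $kG$-modules
\[
\mathrm{Ind}_{H_i}^G k \otimes \mathrm{Ind}_{H_j}^G k \;\simeq\; \bigoplus_{x\in D}\mathrm{Ind}_{W(x)}^G k,
\]
which is just the Mackey formula for tensor products of induced modules. Next I would feed this decomposition into the cup product: the class $\gamma_i(\alpha)\cup\gamma_j(\beta)$ is computed by applying $\widehat{\mathrm H}^*(G,-)$ to the composite of $\Delta$ and the multiplication, and the functoriality of Tate cohomology in the coefficient module turns each orbit summand into a corestriction from $W(x)$ to the relevant centralizer $H_{k(x)}$, after first restricting $\alpha$ and $\beta$ to $W(x)$ (through the conjugations $y$ and $yx$ that move the orbit representative into standard position). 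Assembling the summands over $x\in D$ yields exactly the stated formula, where $cor^{H_k}_W$, $res$, and the conjugation maps $y^*,(yx)^*$ are the standard operations recalled in Section \ref{subsection: remainder on group cohomology}. One must check that these classical operations on group cohomology extend verbatim to Tate cohomology — this is standard and follows from the functoriality of complete resolutions.

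The main obstacle I expect is bookkeeping rather than conceptual: one has to verify carefully that the various identifications are compatible on the nose, namely that (a) the Shapiro isomorphism $\widehat{\mathrm H}^*(G,kC_{g_i})\simeq\widehat{\mathrm H}^*(H_i,k)$ used to define $\gamma_i$ agrees with the isomorphism in Proposition \ref{isomorphism-of-complexes}; (b) the orbit-representative choices and the element $y$ appearing in ``$g_k={}^yg_i{}^{yx}g_j$'' are consistent throughout, so that the conjugation maps land where claimed; and (c) the decomposition of the multiplication map on coefficient modules is genuinely $G$-equivariant with the conjugation action, so that applying $\widehat{\mathrm H}^*(G,-)$ commutes with taking the direct sum over $D$. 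An alternative, arguably cleaner, route — the one the authors announce they will follow (Bouc's method) — is to package all of this into the language of Green functors: the assignment $H\mapsto\widehat{\mathrm H}^*(H,k)$ is a Green functor on the subgroups of $G$, the additive decomposition exhibits $\widehat{\HH}^*(kG,kG)$ as (a summand of) the value of the associated ``Brauer quotient''-type construction, and the cup product formula is then a formal consequence of the axioms (Frobenius reciprocity plus the Mackey double coset formula) for Green functors. In that approach the only nontrivial input is to check that the additive decomposition is compatible with the Green functor structure, which reduces the whole theorem to the Mackey formula above. Either way, the decisive step is the module decomposition $\mathrm{Ind}_{H_i}^Gk\otimes\mathrm{Ind}_{H_j}^Gk\simeq\bigoplus_{x\in D}\mathrm{Ind}_{W(x)}^Gk$ and the identification of $W(x)$ as the pair stabilizer.
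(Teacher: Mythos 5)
Your proposal is correct in outline, but it is not the route the paper takes: your primary argument is essentially the classical Siegel--Witherspoon-style computation (the one Nguyen herself uses in \cite{Nguyen2012}), namely identify $\widehat{\HH}^*(kG,kG)$ with $\widehat{\mathrm H}^*(G,{}_ckG)$ with the product induced by the multiplication ${}_ckG\otimes{}_ckG\to{}_ckG$, decompose $kC_{g_i}\otimes kC_{g_j}$ into $G$-orbits indexed by $H_i\backslash G/H_j$ via the Mackey formula $\mathrm{Ind}_{H_i}^Gk\otimes\mathrm{Ind}_{H_j}^Gk\simeq\bigoplus_{x\in D}\mathrm{Ind}_{W(x)}^Gk$, and translate each orbit component through Shapiro's lemma into a corestriction from the pair stabilizer $W(x)$ into $H_{k(x)}$. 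The paper instead follows Bouc \cite{Bouc2003}: it verifies that $X\mapsto\widehat{\mathrm H}^*(G,k[X])$ is a Green functor, that ${}_cG$ is a crossed $G$-monoid so the Dress construction is again a Green functor, and then simply specializes Bouc's general product formula (Theorem \ref{Thm:Green functor}) for $A(\Gamma)=\bigl(\bigoplus_{\gamma\in\Gamma}A(G_\gamma)\bigr)^G$, with the orbit/double-coset combinatorics and the Frobenius and Mackey identities already packaged into that theorem; the only extra input is Nguyen's identification (her Section 4) of $\widehat{\mathrm H}^*(G,k[{}_cG])$ with its $\times_\Gamma$-product as the Tate--Hochschild ring, which your route needs in the equivalent form of your opening identification of the cup product with the coefficient-multiplication product. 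So your approach buys an explicit, self-contained computation but at the cost of exactly the bookkeeping you list in (a)--(c), all of which must genuinely be checked (in particular the compatibility of the Shapiro isomorphism with the splitting $\gamma_i$ of Proposition \ref{isomorphism-of-complexes}), while the paper's approach trades that bookkeeping for the formal machinery of Green functors, which is why it is a ``quick'' new proof. One caution on your write-up: the parenthetical ``choosing $y$ so that $yg_iy^{-1}=g_i$, i.e.\ $y\in H_i$'' is wrong --- in the statement $y$ is any element with ${}^y(g_i\,{}^xg_j)=g_k$, so it conjugates the product into the fixed representative and need not centralize $g_i$; your later sentence about moving the orbit representative into standard position via $y$ and $yx$ is the correct description, and with that reading the stabilizer computation $W={}^{yx}H_j\cap{}^yH_i\leq H_k$ goes through as you indicate.
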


Note that conjugation maps, restriction maps and corestriction maps appeared  in the above formula have been recalled in Section \ref{subsection: remainder on group cohomology}. The cup product formula for Hochschild cohomology $\HH^*(kG,kG)$ was given by  Siegel and Witherspoon  in \cite{SW1999}.


From the above cup product formula it is clear that the Tate cohomology algebra $\widehat{\mathrm H}^*(G, k)$ can be seen as a graded subalgebra of $\widehat{\HH}^*(kG,kG)$ and $$\widehat{\HH}^*(kG,kG)\simeq \bigoplus_{x\in X}\widehat{\mathrm H}^*(C_G(x), k)=\widehat{\mathrm H}^*(G,
k)\oplus(\bigoplus_{x\in X- \{1\}}\widehat{\mathrm H}^*(C_G(x), k))$$ is an
isomorphism of graded $\widehat{\mathrm H}^*(G, k)$-modules (see also Theorem \ref{theorem-A-infinity}).


\subsection{Mackey functors}

There are at least three different (but
equivalent) points of view for Mackey functors and Green functors.  We shall recall two of them; for more details we refer the reader to \cite{Bouc1997, Bouc2003a, Bouc2003}. We will see that  the cup product formula from the previous section follows essentially from the equivalence between two definitions of Green functors.

 Let $k$   be a field and  $G$ be a finite group.
\begin{Def}\label{Def-Mackey-1}  A Mackey functor for $G$ over $k$ is given by the following data:

For an arbitrary subgroup $H$ of $G$, we are  given a $k$-module $M(H)$ and homomorphisms of $k$-modules
$$t^{K}_{H}:M(H)\rightarrow M(K),$$
$$r^{K}_{H}:M(K)\rightarrow M(H),$$
$$c_{g,H}:M(H)\rightarrow M({}^{g}\!H),$$ for $H\leq  K\leq  G, g\in G$, where ${}^g\!H=gHg^{-1}$.
They satisfy the following conditions:
\begin{itemize}
\item[(1)] (transitivity) If $H\subseteq K\subseteq L$  are subgroups of $G$, then
$$t^{L}_{K}t^{K}_{H}=t^{L}_{H}, r^{K}_{H}r^{L}_{K}=r^{L}_{H};$$
if $g, g'\in G,H\leq G$, then $c_{g',{}^{g}\!H}c_{g,H}=c_{g'g,H}.$

\item[(2)](compatibility)  If $H\subseteq K$  are subgroups of $G$ and  $g\in G$, then
$$c_{g,K}t^{K}_{H}=t^{{}^{g}\!K}_{{}^{g}\!H}c_{g,H},$$
$$c_{g,H}r^{K}_{H}=r^{{}^{g}\!K}_{{}^{g}\!H}c_{g,K}.$$

\item[(3)] (triviality) If $H$ is a subgroup of $G$, then
$$t^{H}_{H}=r^{H}_{H}=\mathrm{Id}_{M(H)};$$
moreover, if $g\in H$, $c_{g, H}=\mathrm{Id}_{M(H)}$.

\item[(4)](Mackey axiom) If $H\subseteq K\supseteq L$ are subgroups of $G$, then
$$r^{K}_{H}t^{K}_{L}=\sum_{u\in [H\setminus K/L]}t^{H}_{H\cap {}^{u}L}c_{u,H^{u}\cap L}r^{L}_{H^{u}\cap L},$$
where $[H\setminus K/L]$ is a set of representatives of the double cosets of $K$ modulo $H$ and $L$ and $H^u=u^{-1}Hu$. \end{itemize}
\end{Def}
The maps $t^K_H$ are called transfers or traces and the maps $r^K_H$ are called restrictions.

The cohomology of finite groups is a Mackey functor. In fact, fix a finite group $G$, for $H\leq G$, define $M(H)=\mathrm H^*(H,k)$; for $H\leq  K\leq  G, g\in G$, the maps  $t^{K}_{H}, r^{K}_{H}, c_{g,H}$
are respectively the maps  $cor^K_H, res^K_H, g^*$ recalled in Section \ref{subsection: remainder on group cohomology}.  The Tate cohomology of finite groups is also a Mackey functor with similarly defined maps.


Let us introduce the second definition of Mackey functors and this definition uses the category $G$-$\mathrm{set}$ of finite  left $G$-sets. Recall that a {\it bivariant functor} from $G$-$\mathrm{set}$ to the category $k$-$\mathrm{Mod}$ of $k$-modules is a pair of functors
$(\tilde{M}_{*},\tilde{M}^{*})$ from  $G$-$\mathrm{set}$ to $k$-$\mathrm{Mod}$, where $\tilde{M}_{*}$ is covariant and $\tilde{M}^{*}$ is contravariant, such that the pair of functors coincide on objects. That is,  for any $G$-set $X$, the two $k$-modules $\tilde{M}_{*}(X)$ and $\tilde{M}^{*}(X)$ are the same (denoted by $\tilde{M}(X)$).

\begin{Def}\label{Def-Mackey-2}
A Mackey functor for $G$ over $k$ is a bivariant functor 
$(\tilde{M}_{*},\tilde{M}^{*})$ from  $G$-$\mathrm{set}$ to $k$-$\mathrm{Mod}$ satisfying  the following conditions:
\begin{itemize}
\item[(1)](additivity)  If $X$ and $Y$ are finite $G$-sets, $i_{X}$ and $i_{Y}$ are respectively the inclusion map of $X$ and $Y$ to the disjoint union $X\sqcup Y$, then the maps
 $$\tilde{M}(X)\oplus \tilde{M}(Y)\stackrel{(\tilde{M}_{*}(i_{X}),\tilde{M}_{*}(i_{Y}))}{\longrightarrow}\tilde{M}(X\sqcup Y)$$ and $$\tilde{M}(X\sqcup Y)
\stackrel{\left
(\begin{array}{c}
\tilde{M}^{*}(i_{X})\\
\tilde{M}^{*}(i_{Y})
\end{array}\right)}
{\longrightarrow}\tilde{M}(X)\oplus \tilde{M}(Y)$$ are isomorphisms inverse one to each other.

\item[(2)](cartesian squares) If $$\begin{array}[c]{ccc}
X&\stackrel{a}{\rightarrow}&Y\\
\downarrow\scriptstyle{b}&&\downarrow\scriptstyle{c}\\
T&\stackrel{d}{\rightarrow}&Z
\end{array}$$ is a pullback (or equivalently, cartesian square) of finite $G$-sets, then we have the equality $$\tilde{M}_{*}(b)\tilde{M}^{*}(a)=\tilde{M}^{*}(d)\tilde{M}_{*}(c).$$
\end{itemize}
\end{Def}
Notice that the property for cartesian squares in the second definition (i.e. Definition \ref{Def-Mackey-1}) implies  the Mackey formula in the first definition (i.e. Definition \ref{Def-Mackey-2}).

Morphisms between Mackey functors are natural transformations between bivariant functors and compositions of morphisms are just compositions of natural transformations.

The equivalence between the two definitions can be explained as follows.

 Give a Mackey functor $M$ in the sense of the first definition,    for a finite $G$-set $X$ and each $x\in X$, write $G_x\leq G$ its stabilizer. Then define $\tilde{M}(X)=(\oplus_{x\in X} M(G_x))^G$, where the $G$-action on $\oplus_{x\in X} M(G_x)$ is given by $g\cdot \alpha= c_{g, G_x}(\alpha)\in  M({}^gG_x)=M(G_{gx})$ for  $x\in X, g\in G, \alpha\in M(G_x)$. It is not difficult to verify that $\tilde{M}$ is a Mackey functor in the sense of the second definition. Usually we take $[G\backslash X]$  a set of representatives of the orbits of $G$ in $X$, and  write $\tilde{M}(X)=\oplus_{x\in [G\backslash X]} M(G_x)$.

Conversely, given a Mackey functor $\tilde{M}$ in the sense of the second definition, then for  a subgroup $H\leq G$, the set of left cosets $G/H$ can be considered as a transitive $G$-set with left multiplication as $G$-action, then define $M(H)=\tilde{M}(G/H)$, and this is a Mackey functor in the sense of the first definition.

In the following, we will not distinguish between $M$ and $\tilde{M}$, and write only $M$.

Return to the example of the Mackey functor given by Tate cohomology of finite groups. For a finite $G$-set $X$, define $M(X)=\widehat{\mathrm H}^*(G, k[X])$;   for a morphism of $G$-sets $f: X\to Y$, the   map $M_*(f): M(X)\to M(Y)$ is the usual map $\widehat{\mathrm H}^*(G, k[X])\to \widehat{\mathrm H}^*(G, k[Y])$, since Tate cohomology is covariant in the coefficients. Obviously for $H\leq G$, $M(G/H)=\widehat{\mathrm H}^*(G, k[G/H])=\widehat{\mathrm H}^*(G, kG\otimes_{kH}k)\simeq \widehat{\mathrm H}^*(H, k)$.

Let $\Gamma$ be a finite $G$-set. For a Mackey functor $M$ for $G$ over $k$, the \textit{Dress construction}   gives a
new Mackey functor $M_{\Gamma}$ defined as follows: for any
$G$-set $X$, $M_{\Gamma}(X)=M(X\times \Gamma)$. It is not
difficult to see that $M_\Gamma$ is a Mackey functor (\cite[1.2]{Bouc1997}).

\subsection{Green functors}
A Green functor A is a Mackey functor ``with a compatible
ring structure''. There is also a definition of Green functors in terms of
$G$-sets (\cite[2.2]{Bouc1997}).

\begin{Def} A Green functor for a finite group $G$  over $k$ is a Mackey functor $A$, such that for each subgroup $H\leq G$, $A(H)$ has a structure of $k$-algebra. We ask that  the Mackey functor structure and the algebra structure satisfy the following conditions:
\begin{itemize}

\item[(1)]If $H\subseteq K$ are subgroups of $G$, for arbitrary $g\in G$, the $k$-module homomorphisms $r^{K}_{H}$ and $c_{g,H}$ are homomorphisms of $k$-algebras.

\item[(2)](Frobenius identity) If $H\subseteq K$ are subgroups of $G$,  for arbitrary elements $a\in A(H),b\in A(K),$ we have
$$b\cdot (t^{K}_{H}a)=t^{K}_{H}((r^{K}_{H}b)\cdot a),$$
$$(t^{K}_{H} a)\cdot b=t^{K}_{H}(a\cdot (r^{K}_{H}b)).$$
\end{itemize}
Let $A,B$ be two Green functors, and let $f:A\longrightarrow B$ be a morphism between Mackey functors. If for each subgroup $H\leq G$, the map $f_{H}:A(H)\longrightarrow B(H)$ is a homomorphism of $k$-algebras, then $f$ is a morphism of Green functors.
\end{Def}

Similarly, Green functors have a definition via $G$-sets.
\begin{Def} A Green functor over $k$ is a Mackey functor $A$, such that  for each pair of finite $G$-sets $X,Y$, there is a homomorphism of $k$-modules $A(X)\times A(Y)\to A(X\times Y), (a,b)\longrightarrow a\times b$ satisfying the following conditions:
\begin{itemize}
\item[(1)](bifunctoriality) If $f:X\longrightarrow X'$ and $g:Y\longrightarrow Y'$ are homomorphisms of finite $G$-sets, then there exist the following commutative diagrams£»
$$\begin{array}[c]{ccc}
A(X)\times A(Y)&\stackrel{\times}{\rightarrow}&A(X\times Y)\\
\downarrow\scriptstyle{A_{*}(f)\times A_{*}(g)}&&\downarrow\scriptstyle{A_{*}(f\times g)}\\
A(X')\times A(Y')&\stackrel{\times}{\rightarrow}&A(X'\times Y')
\end{array}$$ and
$$\begin{array}[c]{ccc}
A(X)\times A(Y)&\stackrel{\times}{\rightarrow}&A(X\times Y)\\
\uparrow\scriptstyle{A^{*}(f)\times A^{*}(g)}&&\uparrow\scriptstyle{A^{*}(f\times g)}\\
A(X')\times A(Y')&\stackrel{\times}{\rightarrow}&A(X'\times Y')
\end{array}$$

\item[(2)](associativity) If $X,Y,Z$ are finite $G$-sets, we have the following commutative diagram:
$$\begin{array}[c]{ccc}
A(X)\times A(Y)\times A(Z)&\stackrel{Id_{A(X)}\times (\times)}{\rightarrow}&A(X)\times A(Y\times Z)\\
\downarrow\scriptstyle{(\times)\times Id_{A(Z)}}&&\downarrow\scriptstyle{\times}\\
A(X\times Y)\times A(Z)&\stackrel{\times}{\rightarrow}&A(X\times Y\times Z)
\end{array}$$
where $((X\times Y)\times Z)\simeq X\times Y\times Z\simeq X\times (Y\times Z)$.

\item[(3)] Let $\bullet$ be the $G$-set with one element. Then there exists an element $\varepsilon_{A}\in A(\bullet)$ such that for each finite  $G$-set $X$ and arbitrary element $a\in A(X)$, we have
$$A_{*}(p_{X})(a\times \varepsilon_{A})=a=A_{*}(q_{X})(\varepsilon_{A}\times a),$$
where $p_{X}$(resp. $q_{X})$ is the projection from $X\times  \bullet$ (resp. $ \bullet \times X)$ to $X$.
\end{itemize}
\end{Def}

Let $A,B$ be Green functors for the group $G$ over $k$. Then a morphism between them is a morphism of Mackey functors $f:A\longrightarrow B$ such that for any finite $G$-sets $X,Y$, the following diagram is commutative:
$$\begin{array}[c]{ccc}
A(X)\times A(Y)&\stackrel{\times}{\rightarrow}&A(X\times Y)\\
\downarrow\scriptstyle{f_{X}\times f_{Y}}&&\downarrow\scriptstyle{f_{X\times Y}}\\
B(X)\times B(Y)&\stackrel{\times}{\rightarrow}&B(X\times Y).
\end{array}$$

Let ${}_cG$ be the $G$-set $G$ with
conjugation action.  Recall that a \textit{crossed
$G$-monoid} $\Gamma$ is a  $G$-monoid with a
$G$-monoid map from $\Gamma$ to ${}_cG$.   Let $A$ be a Green functor over $k$ for $G$ and $\Gamma$ a crossed $G$-monoid. Then
  Bouc proved that the Dress construction $A_\Gamma$ is a
Green functor (\cite[Theorem 5.1]{Bouc2003}).
Notice that in this case, $A(\Gamma)=A_\Gamma(\bullet)$, the evaluation at the trivial $G$-set $\bullet$ of $A_\Gamma$, has a new $k$-algebra structure: for $a, b\in A(\Gamma)$, define their product $a\times_\Gamma b$  to be $A_*(\mu_\Gamma)(a\times b)$, where $\mu_\Gamma: \Gamma \times \Gamma\to \Gamma$ is the multiplication of the $G$-monoid $\Gamma$.

\subsection{A new proof of the cup product formula}

Now we explain how the result in \cite{Bouc2003} gives a quick proof of the main result of \cite{Nguyen2012}, that is, the cup product formula for Tate-Hochschild cohomology.


Let us recall Bouc's result. In the following statement, we write the action of $G$ on $\Gamma$ as ${}^g\gamma$.
\begin{Thm}\label{Thm:Green functor}\cite[Theorem 6.1 and Corollary 6.2]{Bouc2003}
 Let $A$ be a Green functor for $G$ over $k$ and $\Gamma$ a crossed $G$-monoid.  Then
 $$A_\Gamma(\bullet)=A(\Gamma)=\big(\bigoplus_{\gamma\in \Gamma} A(G_\gamma)\big)^G$$
 and the $\gamma$-component of the product of $a, b\in A(\Gamma)$ is
 $$(a\times_\Gamma b)_\gamma=\sum_{(\alpha, \beta)\in G_\gamma\backslash (\Gamma\times \Gamma), \alpha\beta=\gamma} t^{G_\gamma}_{G_{(\alpha, \beta)}} \big(  (r^{G_\alpha}_{G_{(\alpha,\beta)}} a_\alpha) \times (r^{G_\beta}_{G_{(\alpha,\beta)}} b_\beta)\big)  $$

Taking  a set of orbit representatives $[G  \backslash \Gamma ]$,  there is an isomorphism of $k$-modules
$$A(\Gamma)\simeq \bigoplus_{\gamma\in [G  \backslash \Gamma ]}  A(G_{\gamma}) $$
where $[G\backslash \Gamma]$ is a set of representatives of the orbits of $G$ in $\Gamma$. With this notation, the product of $a\in A(G_{\gamma})$ and $b\in A(G_{\delta})$ is equal to
$$\bigoplus_{\epsilon\in [G\backslash \Gamma]} \bigoplus_{\omega\in [G_{\gamma}\backslash G\slash G_{\delta}]}
t^{G_{\epsilon}}_{G_{{}^{g(\omega, \epsilon)}\! \gamma}\cap G_{{}^{g(\omega, \epsilon)  \omega} \delta}}  c_{g(\omega, \epsilon),G_\gamma\cap G_{{}^\omega\delta} }\big( r^{G_\gamma}_{G_\gamma\cap G_{{}^\omega\delta}}a \cdot r^{G_{{}^\omega\delta}}_{G_\gamma\cap G_{{}^\omega\delta}}b\big)$$
where $g(\omega, \epsilon)=g$ is an element of the unique class $G_{\epsilon}g $ in $G_{\epsilon}\backslash G$ such that ${}^g\!(\gamma {}^\omega\!\delta)=\epsilon$.

\end{Thm}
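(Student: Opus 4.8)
The statement is Bouc's determination of the Green functor $A_\Gamma$ obtained from the Dress construction along a crossed $G$-monoid $\Gamma$, so the plan is to unwind the twisted product $a \times_\Gamma b = A_{*}(\mu_\Gamma)(a \times b)$ — where $\mu_\Gamma \colon \Gamma \times \Gamma \to \Gamma$ is the multiplication of the monoid — inside the ``coordinate'' description of a Mackey functor. The first assertion $A_\Gamma(\bullet) = A(\bullet \times \Gamma) = A(\Gamma)$ is immediate from the definition of $A_\Gamma$, so the whole content lies in describing $A(\Gamma)$ and the product on it in terms of the values $A(G_\gamma)$.

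First I would set up the coordinate dictionary for an arbitrary Mackey functor $M$ (in the subgroup description) and an arbitrary finite $G$-set $X$: writing $X$ as a disjoint union of transitive $G$-sets $G/G_{x_i}$ over orbit representatives $x_i$, the additivity axiom gives $M(X) \simeq \bigoplus_i M(G/G_{x_i}) = \bigoplus_i M(G_{x_i})$, and the equivalent symmetric form is
\[
M(X) \;\simeq\; \Bigl(\bigoplus_{x \in X} M(G_x)\Bigr)^{G},
\]
with $G$ acting through the conjugation maps $c_{g,G_x}\colon M(G_x) \to M(G_{gx})$. In this description I would record the two structure maps needed below: for a $G$-map $f \colon X \to Y$,
\[
(M^{*}(f)m)_{x} = r^{G_{f(x)}}_{G_{x}}(m_{f(x)}),
\qquad
(M_{*}(f)m)_{y} = \sum_{x \in [G_{y}\backslash f^{-1}(y)]} t^{G_{y}}_{G_{x}}(m_{x}),
\]
the fiber $f^{-1}(y)$ being a $G_y$-set since $f$ is equivariant; both formulas are immediate from the behaviour of $M^{*}$ and $M_{*}$ on quotient maps $G/G_x \to G/G_y$ and on orbit inclusions, together with transitivity of traces and restrictions.

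Next I would pin down the cross product of the Green functor $A$ in these coordinates, namely
\[
(a \times b)_{(x,y)} \;=\; r^{G_{x}}_{G_{x}\cap G_{y}}(a_{x}) \cdot r^{G_{y}}_{G_{x}\cap G_{y}}(b_{y}) \;\in\; A(G_{(x,y)}) = A(G_{x}\cap G_{y}),
\]
where $\cdot$ is the multiplication of the $k$-algebra $A(G_x \cap G_y)$. To see this I would use the equivalence between the two definitions of a Green functor recalled above: the inclusion of the orbit through $(x,y)$ factors as the diagonal $G/G_{(x,y)} \to G/G_{(x,y)} \times G/G_{(x,y)}$ followed by the product of the two projections $G/G_{(x,y)} \to G/G_x$ and $G/G_{(x,y)} \to G/G_y$; applying $A^{*}$ and bifunctoriality turns $a \times b$ into $A^{*}(\Delta)$ of $r^{G_x}_{G_{(x,y)}}(a_x) \times r^{G_y}_{G_{(x,y)}}(b_y)$, and $A^{*}(\Delta)$ composed with the cross product is by definition the algebra multiplication on $A(G_{(x,y)})$. \textbf{This is the step I expect to be the main obstacle}, since it is where the equivariance bookkeeping of the coordinate dictionary and the (not entirely formal) passage between the two descriptions of a Green functor must be handled carefully; everything else is assembly.

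Finally I would combine the pieces. Applying the trace formula for $M_{*}(\mu_\Gamma)$ to $a \times b$, and noting that $\mu_\Gamma^{-1}(\gamma) = \{(\alpha,\beta) : \alpha\beta = \gamma\}$ is a $G_\gamma$-set with $G_{(\alpha,\beta)} = G_\alpha \cap G_\beta \leq G_\gamma$, yields
\[
(a\times_\Gamma b)_{\gamma} = \sum_{(\alpha,\beta)\in G_\gamma\backslash(\Gamma\times\Gamma),\ \alpha\beta=\gamma} t^{G_\gamma}_{G_{(\alpha,\beta)}}\Bigl( r^{G_\alpha}_{G_{(\alpha,\beta)}}(a_\alpha)\cdot r^{G_\beta}_{G_{(\alpha,\beta)}}(b_\beta)\Bigr),
\]
which is the first formula (the summand being independent, up to the $G_\gamma$-action, of the chosen representative of each $G_\gamma$-orbit in the fiber). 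For the representative form, take $a \in A(G_\gamma)$, $b \in A(G_\delta)$ with $\gamma,\delta \in [G\backslash\Gamma]$, regard them as elements of $A(\Gamma)$ supported on the orbits $G\gamma$ and $G\delta$, and enumerate the $G$-orbits of pairs in $G\gamma \times G\delta$ by the double cosets $\omega \in [G_\gamma\backslash G/G_\delta]$ via the representatives $(\gamma, {}^{\omega}\delta)$, whose stabilizer is $G_\gamma \cap G_{{}^{\omega}\delta}$. The pair $(\gamma,{}^{\omega}\delta)$ is sent by $\mu_\Gamma$ into the orbit $G\epsilon$ determined by $\gamma\cdot{}^{\omega}\delta$; conjugating by the chosen element $g = g(\omega,\epsilon)$ and invoking the compatibility axiom $c_{g,K}\,t^{K}_{H} = t^{\,{}^{g}K}_{\,{}^{g}H}\,c_{g,H}$ rewrites the corresponding trace as $t^{G_\epsilon}_{\,{}^{g}(G_\gamma\cap G_{{}^{\omega}\delta})}$ precomposed with $c_{g,\,G_\gamma\cap G_{{}^{\omega}\delta}}$, while the components $a_\gamma = a$ and $b_{{}^{\omega}\delta} = c_{\omega,G_\delta}(b)$ feed into the cross-product formula of the previous step; summing over $\epsilon \in [G\backslash\Gamma]$ and $\omega \in [G_\gamma\backslash G/G_\delta]$ reproduces exactly the stated expression. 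The Frobenius identity of the Green functor is not needed for the computation itself, only for the coherence of the resulting structure.
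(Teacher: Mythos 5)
Your proposal is correct, and there is nothing in the paper to compare it against: the statement is quoted from Bouc \cite[Theorem 6.1 and Corollary 6.2]{Bouc2003} without proof, and your reconstruction follows essentially Bouc's own route — rewrite the Mackey/Green data in the invariant-family coordinates $\big(\bigoplus_{\gamma\in\Gamma}A(G_\gamma)\big)^G$, compute the cross product componentwise as a product of two restrictions, apply $A_*(\mu_\Gamma)$ fiberwise to get the first formula, and pass to orbit representatives via double cosets (using the compatibility of $t$, $r$ with conjugation and the multiplicativity of $c_g$) for the second. Your reading of the representative formula, in which $b$ is implicitly transported by $c_{\omega,G_\delta}$ before the restriction $r^{G_{{}^{\omega}\delta}}_{G_\gamma\cap G_{{}^{\omega}\delta}}$ and each $\omega$ contributes only to the unique $\epsilon$ in the orbit of $\gamma\,{}^{\omega}\delta$, is the intended one.
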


Let $k$ be a field and $G$ a finite group.  Then $G$ acts by conjugation on itself and denote by ${}_cG$ this $G$-set. Then it is well known that the Tate cohomology $A=\widehat{\mathrm H}^*(G,
k[?])$ sending a finite $G$-set $X$ to $\widehat{\mathrm H}^*(G,
k[X])$ is a Green functor.  Consider the crossed $G$-monoid
$\Gamma=({}_cG, u)$ where $u: {}_cG \to {}_cG$ is the trivial
homomorphism of $G$-monoids sending each element to the unit in
${}_cG$. Then one verifies easily that for the Dress construction we get
$A_{\Gamma}=\widehat{\mathrm H}^*(G, k[?\times {}_cG])$.
Remark that by \cite[Section 4]{Nguyen2012}, $A(\Gamma)=A_\Gamma(\bullet)=\widehat{\mathrm H}^*(G,
k[{}_cG])$, together with the $k$-algebra structure  $a\times_\Gamma b$  defined above,   is isomorphic to the Tate-Hochschild cohomology ring $\widehat{\HH}^*(kG,kG).$

 Nguyen \cite{Nguyen2012} considered the additive decomposition for the Tate-Hochschild cohomology ring of a group algebra. Her proof is similar to that of \cite{SW1999}. Notice that Bouc's above result also applies to this situation and yields a new proof of \cite[Theorem 5.5]{Nguyen2012} just as is explained in \cite[Page 421]{Bouc2003}. Let us explain the details.

 As a $G$-set, the $G$-orbits in ${}_cG$ are just conjugacy classes. Let $X=\{g_1=1,g_2,\cdots, g_r\}$ be a complete set of representatives of conjugacy classes of
elements of $G$ and denote the centralizer subgroup $C_G(g_i)$ of $G$ by $H_i$ for $1\leq i\leq r$.  As $G$-sets, ${}_cG$ is isomorphic to $\coprod_{i=1}^r G/H_i$, and $$A(G/H_i)=\widehat{\mathrm H}^*(G,
k[G/H_i])=\widehat{\mathrm H}^*(G,
kG\otimes_{kH_i} k])\simeq \widehat{\mathrm H}^*(H_i).$$  So we have an isomorphism of graded vector spaces
$$\widehat{\HH}^*(kG)\simeq \widehat{\mathrm H}^*(G, k[{}_cG])\simeq \bigoplus_{i=1}^r\widehat{\mathrm H}^*(H_i).$$

Fix $i,j\in \{1,\cdots,r\}$. Let $D$ be a set of double coset representatives for $H_i\backslash G/H_j$. Recall that for each $x\in D$, there is a unique $k=k(x)$ such that $g_k={}^y(g_i{}^{x}g_j)$ for some $y\in G$.
Now Theorem \ref{Thm:Green functor} gives the cup product formula:
$$\begin{array}{rcl}\gamma_i(\alpha)\cup \gamma_j(\beta)&=&\sum_{x\in D} \gamma_k (cor^{H_k}_{{}^y\!H_i\cap {}^{yx}\!H_j}  y^*(res^{H_i}_{H_i\cap {}^x\!H_j}\alpha\cup res^{{}^{x}\!H_j}_{H_i\cap {}^x\!H_j}(\beta)))\\
&=&\sum_{x\in D}\gamma_k(cor^{H_k}_{W}(res^{{^y\!H_i}}_{W}y^*(\alpha)\cup res^{{}^{yx\!}H_j}_{W}{(yx)}^*(\beta))),\end{array}$$
where $W=W(x)={}^{yx}H_j\bigcap {}^yH_i$. This is exactly  \cite[Theorem 5.5]{Nguyen2012}.

\begin{Rem}\label{remark-cup-product-negative}
The above cup product formula on $\widehat{\HH}^*(kG, kG)$  can be lifted to a cup product formula at the complex level by the homotopy deformation retract in Remark \ref{remark-deformation-retract2}\footnote{There should have an $A_{\infty}$-product formula at the complex level using the Homotopy Transfer Theorem. Here we only consider the product formula for $m_2$. The higher product formulas will be explored in future research.}.  Recall that the cup product formula in the nonnegative part $\mathcal D^{\geq 0}(kG, kG)$  in terms of the additive decomposition at the complex level was obtained in \cite[Section 7]{LZ2015}. Now, we describe the cup product formula in the negative part $\mathcal D^{<0}(kG,kG)$ as follows.  Let $X$ be the fixed set of representatives of conjugacy classes of elements of $G$. Recall that for any $z\in X$, we have fixed a right coset decomposition of $C_G(z)$ in $G$: $G=C_G(z)\gamma_{1, z}\cup \cdots \cup C_G(z)\gamma_{n_z, z}$. 
Let $\alpha_x:= (g_{1, s})\in k[\overline{C_G(x)}^{\times s}]=C_s(C_G(x), k)$ and $\alpha_y:=(h_{1, t}) \in k[\overline{C_G(y)}^{\times t}]=C_t(C_G(y), k)$ for $x, y\in X$.  By Theorem \ref{realization-Hochschild-homology}, we get that 
$$\iota_*(\alpha_x)=(g_s^{-1}\cdots g_{1}^{-1}x, g_{1, s})\in C_*(kG, kG);$$ $$\iota_*(\alpha_y)=(h_t^{-1}\cdots h_1^{-1}y, h_{1, t})\in C_*(kG, kG).$$ This yields $$\iota_*(\alpha_x)\cup \iota_*(\alpha_y)=\sum_{g\in G} (gh_t^{-1}\cdots h_1^{-1} y, h_{1, t}, g^{-1} g_{s}^{-1}\cdots g_1^{-1}x, g_{1,s})\in C_{s+t+1}(kG, kG).$$
Therefore, we have the following cup product formula: $$\rho_*(\iota_*(\alpha_x)\cup \iota_*(\alpha_y))=\sum_{z\in X}(\alpha_x\cup \alpha_y)_z\in \bigoplus_{z\in X} C_{s+t+1}(C_G(z), k),$$
where for a fixed $z\in X$,  
$$(\alpha_x\cup \alpha_y)_z=\sum_{g\in I_z} (k_{i_1}, \cdots, k_{i_{s+t+1}})\in C_{s+t+1}(C_G(z), k)$$
where $$I_z:=\{g\in G | h_1\cdots h_t g^{-1}x g h_t^{-1}\cdots h_1^{-1}y =\phi(g)z\phi(g)^{-1} \ \mbox{for some $\phi(g)\in G$}\}$$ and $k_{i_1}, \cdots, k_{i_{s+t+1}}\in\overline{C_G(z)}$ are uniquely determined by the following equations
$$\phi(g)\in C_G(z)\gamma_{i, z}, \quad  \gamma_{i, z}h_1=k_{i_1}\gamma_{s_i^1, z}, \quad  \gamma_{s_i^1, z}h_2=k_{i_2}\gamma_{s_i^2, z}, \quad \cdots, \quad \gamma_{s_i^{t-1}, z} h_t=k_{i_t} \gamma_{s_i^t, z}, $$
$$\gamma_{s_i^t, z}g^{-1}g_{s}^{-1}\cdots g_1^{-1} x=k_{i_{t+1}} \gamma_{s_i^{t+1}, z}, \quad \gamma_{s_i^{t+1}, z} g_1=k_{i_{t+2}} \gamma_{s_i^{t+2}, z},\quad \cdots, \quad \gamma_{s_i^{t+s}, z}g_s=k_{i_{t+s+1}}\gamma_{s_i^{t+s+1}, z}.$$
By  Remark \ref{duality-and-additive-decomposition} and   Equation (\ref{equation-cyclic-pairing}), it is not difficult to obtain the cup product formula for the other cases between $\mathcal D^{<0}(kG,kG)$ and $\mathcal D^{\geq 0}(kG, kG)$. The details are left  to the reader.  
\end{Rem}


\bigskip
\section{The $\widehat{\Delta}$-operator formula}\label{section-delta-operator}

We have defined the BV-operator $\widehat{\Delta}$ in $\widehat{\HH}^*(kG,kG)$ at the complex level (cf. Section 2). In this section, we determine the behavior of the operator $\widehat{\Delta}$ under the additive decomposition. There are three cases to be considered:

\medskip
{\it The first case:} $\widehat{\Delta}$ in $\widehat{\HH}^{> 0}(kG,kG)$. At the complex level, for any $n> 0$, $$\widehat{\Delta}: \Map(\overline{G}^{\times n},kG)\rightarrow \Map(\overline{G}^{\times (n-1)},kG)$$ maps any $\alpha: \overline{G}^{\times n}\rightarrow kG$ to $\widehat{\Delta}(\alpha):  \overline{G}^{\times (n-1)}\rightarrow kG$ such that
$$\widehat{\Delta}(\alpha)(g_{1,n-1})=\sum_{g_n\in G}\sum_{i=1}^n(-1)^{i(n-1)}\langle \alpha(g_{i,n},g_{1,i-1}),1\rangle g_n^{-1}.$$

{\it The second case:} $\widehat{\Delta}$ in $\widehat{\HH}^{\leq -1}(kG,kG)$. At the complex level, for any $n\leq -1$ (let $s=-n-1\geq 0$), $$\widehat{\Delta}: k[G\times\overline{G}^{\times s}]\rightarrow k[G\times\overline{G}^{\times {s+1}}]$$ is given by
$$\alpha=(g_0,g_{1,s})\mapsto \widehat{\Delta}(\alpha)=\sum_{i=0}^s(-1)^{is}(1,g_{i,s},g_{0,i-1}).$$

{\it The third case:} $\widehat{\Delta}: \widehat{\HH}^0(kG,kG)\rightarrow \widehat{\HH}^{-1}(kG,kG)$. In this case, $\widehat{\Delta}: kG\rightarrow kG$ is zero.

Since the last case is trivial, we deal with the first two cases. In the first case, $\widehat{\Delta}$ is the BV-operator $\Delta$ in the Hochschild cohomology $\HH^*(kG,kG)$, and its behavior under the additive decomposition has been determined in \cite{LZ2015}. Let us briefly recall the results there. As in Section 4, we fix a complete set $X$ of
representatives of the conjugacy classes in the finite group $G$.
For $x\in X$, $C_x=\{gxg^{-1}|g\in G\}$ is the conjugacy class
corresponding to $x$ and $C_G(x)=\{g\in G|gxg^{-1}=x\}$ is the
centralizer subgroup. For each $x\in X$,
$\mathcal{H}_x^*=\bigoplus_{n\geq 0}\mathcal{H}_x^n$ is a subcomplex
of the Hochschild cochain complex $C^*(kG, kG)=\mathcal{H}^*$, where
$$\mathcal{H}_x^n=\{\varphi: \overline{G}^{\times
n}\longrightarrow kG|\varphi(g_1,\cdots, g_{n})\in k[g_1\cdots
g_{n}C_x]\subset kG, \forall g_1,\cdots, g_n\in \overline{G}\}.$$

\begin{Lem}\label{restriction-Hochschild-cohomology}  $($\cite[Lemma 8.1]{LZ2015}$)$  For any $x\in X$ and $n\geq 1$, the BV-operator $\Delta: \mathcal{H}^n\longrightarrow
\mathcal{H}^{n-1}$ restricts to $\Delta_x:
\mathcal{H}_x^n\longrightarrow \mathcal{H}_x^{n-1}$.
\end{Lem}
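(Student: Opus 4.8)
The plan is to verify directly that the Connes-type operator $\Delta$ preserves the "support condition" defining $\mathcal{H}_x^*$. Concretely, fix $x\in X$ and $\alpha\in\mathcal{H}_x^n$ for some $n\geq 1$, so that $\alpha(g_{1,n})\in k[g_1\cdots g_n C_x]$ for all $g_1,\dots,g_n\in\overline{G}$. I must show that $\Delta(\alpha)\in\mathcal{H}_x^{n-1}$, i.e.\ that $\Delta(\alpha)(g_{1,n-1})\in k[g_1\cdots g_{n-1} C_x]$ for all $g_1,\dots,g_{n-1}\in\overline{G}$. Recall the formula
$$\Delta(\alpha)(g_{1,n-1})=\sum_{g_n\in G}\sum_{i=1}^n(-1)^{i(n-1)}\langle \alpha(g_{i,n},g_{1,i-1}),1\rangle\, g_n^{-1}.$$

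The key step is to analyze, for each fixed $i$ and each $g_n\in G$, when the scalar $\langle\alpha(g_{i,n},g_{1,i-1}),1\rangle$ is nonzero and what constraint this forces on $g_n^{-1}$. By the support hypothesis, $\alpha(g_i,\dots,g_n,g_1,\dots,g_{i-1})$ lies in $k[g_i\cdots g_n g_1\cdots g_{i-1} C_x]$; since $\langle\cdot,\cdot\rangle$ is the symmetrizing form of $kG$, the coefficient $\langle\alpha(\cdots),1\rangle$ is nonzero only if $1\in g_i\cdots g_n g_1\cdots g_{i-1} C_x$, equivalently $g_i\cdots g_n g_1\cdots g_{i-1}\in C_x^{-1}=C_{x^{-1}}$. (One should be slightly careful about edge cases: when $i=1$ the argument is $(g_1,\dots,g_n)$ and when $i=n$ some $g_j$'s may need to be handled via the normalization; but these do not affect the argument.) Writing $w=g_1\cdots g_{n-1}$, the condition becomes $w^{-1}g_i\cdots g_n g_1\cdots g_{i-1}$ conjugate to $x^{-1}$ — and rearranging, $g_n$ is constrained to a single coset, namely $g_n = g_{i-1}^{-1}\cdots g_1^{-1}\, c\, (g_{i}\cdots g_{n-1})^{-1}$ for $c\in C_{x^{-1}}$. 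Thus $g_n^{-1}$ ranges over $g_{i}\cdots g_{n-1}\,c^{-1}\,g_1\cdots g_{i-1}$ with $c^{-1}\in C_x$, and since $c^{-1}$ is conjugate to $x$, the element $g_n^{-1}$ is conjugate to $g_i\cdots g_{n-1}g_1\cdots g_{i-1}\cdot(g_i\cdots g_{n-1})^{-1}\cdot(g_1\cdots g_{n-1})\cdot x\cdot(\cdots)$ — after simplification one checks $g_n^{-1}\in k[g_1\cdots g_{n-1}C_x]$, because $g_i\cdots g_{n-1}\cdot c^{-1}\cdot g_1\cdots g_{i-1}$ is conjugate (via $g_1\cdots g_{i-1}$) to $g_1\cdots g_{i-1}g_i\cdots g_{n-1}\cdot c^{-1} = w c^{-1}$, which lies in $wC_x$ since $c^{-1}\in C_x$ and $wC_x=C_xw$ (conjugacy classes are normal subsets). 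This is exactly the support condition for membership in $\mathcal{H}_x^{n-1}$.

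Assembling this over all $i$ and all admissible $g_n$ shows every term of $\Delta(\alpha)(g_{1,n-1})$ lies in $k[g_1\cdots g_{n-1}C_x]$, hence so does the sum, proving $\Delta(\alpha)\in\mathcal{H}_x^{n-1}$.

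The main obstacle I anticipate is purely bookkeeping: getting the conjugation algebra exactly right, in particular tracking which element conjugates $c^{-1}$ into the set $wC_x=C_xw$, and correctly handling the cyclic reindexing $(g_{i,n},g_{1,i-1})$ together with the boundary cases $i=1$ and $i=n$ (and the role of the normalization $\overline{G}=G\setminus\{1\}$). There is no deep structural difficulty — the statement is really just the observation that the decomposition $\mathcal{H}^*=\bigoplus_{x\in X}\mathcal{H}_x^*$ is respected by the cyclic operator because cyclic permutation of the arguments $(g_1,\dots,g_n)$ preserves the product $g_1\cdots g_n$ up to conjugacy, and conjugacy classes $C_x$ are stable under conjugation. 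Since this lemma is quoted verbatim from \cite[Lemma 8.1]{LZ2015}, the cleanest exposition would be to cite that reference and sketch the above support-tracking argument.
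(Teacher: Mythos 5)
The paper itself offers no argument for this lemma---it is quoted with a citation to \cite[Lemma 8.1]{LZ2015}---and your direct support-tracking verification is exactly the natural route, so the overall strategy is fine. There is, however, a concrete error in the middle of your computation, and it forces you into an invalid inference. Nonvanishing of $\langle\alpha(g_{i,n},g_{1,i-1}),1\rangle$ together with the support condition forces $g_i\cdots g_{n-1}g_ng_1\cdots g_{i-1}=c^{-1}$ for some $c\in C_x$, whence
$$g_n=(g_i\cdots g_{n-1})^{-1}\,c^{-1}\,(g_1\cdots g_{i-1})^{-1},\qquad g_n^{-1}=(g_1\cdots g_{i-1})\,c\,(g_i\cdots g_{n-1});$$
in your proposal the two outer factors are interchanged (you solve $g_n=(g_1\cdots g_{i-1})^{-1}c(g_i\cdots g_{n-1})^{-1}$). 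Because of this you are then forced to argue that $g_n^{-1}$ is merely conjugate, via $g_1\cdots g_{i-1}$, to $wc^{-1}\in wC_x$ with $w=g_1\cdots g_{n-1}$, and you conclude membership in $wC_x$ from that. This step is not valid: the set $wC_x$ is not stable under conjugation (only the class $C_x$ itself is), so ``conjugate to an element of $wC_x$'' does not yield ``$\in wC_x$''; indeed, with the interchanged expression one only gets $g_n^{-1}\in (g_i\cdots g_{n-1})(g_1\cdots g_{i-1})\,C_x$, which is the wrong set in general.

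The repair is immediate once the factors are in the correct order: putting $u=g_1\cdots g_{i-1}$ and $v=g_i\cdots g_{n-1}$, so that $w=uv$, one has
$$g_n^{-1}=u\,c\,v=(u\,c\,u^{-1})\,uv\in C_x\,w=w\,C_x,$$
which is literally the required membership $g_n^{-1}\in k[g_1\cdots g_{n-1}C_x]$, with no conjugacy detour. The boundary cases $i=1$ (where $u=1$) and $i=n$ (where $v=1$) are covered by the same formula, and terms in which the cyclically permuted tuple has an entry equal to $1$ vanish by normalization, so they cause no trouble. With this correction your argument is complete and, since the paper only cites \cite{LZ2015} here, it is the appropriate proof to record.
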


We can define an operator $\widetilde{\Delta}_x$ by the following commutative diagram
$$\xymatrix{
\mathrm H^n(\mathcal{H}_x^{*})\ar[r]^{\Delta_x}\ar[d]_{\wr}  &
\mathrm H^{n-1}(\mathcal{H}_x^*)
\ar[d]_{\wr}\\
  \mathrm H^n(C_G(x),k)\ar[r]^{\widetilde{\Delta}_x}  &
\mathrm H^{n-1}(C_G(x),k),}$$
where the vertical
isomorphisms are given in Theorem \ref{realization-Hochschild-cohomology}.

\begin{Thm} \label{bigtriangleup-operator-Hochschild-cohomology}  $($\cite[Theorem 8.2]{LZ2015}$)$  Let
$\widetilde{\Delta}_x: \mathrm H^n(C_G(x),k)\longrightarrow
\mathrm H^{n-1}(C_G(x),k)$ be the map induced by the operator
$\Delta: \HH^n(kG,kG)\longrightarrow \HH^{n-1}(kG,kG)$. Then, at the complex level,
$\widetilde{\Delta}_x$ is defined as follows:
$$\widetilde{\Delta}_x(\psi)(h_{1,n-1})=\sum_{i=1}^n(-1)^{i(n-1)}\psi(h_{i, n-1}, h_{n-1}^{-1}\cdots h_1^{-1}x^{-1},
 h_{1,i-1})$$
 for $\psi:\overline{C_G(x)}^{\times
n}\longrightarrow k$ and for $h_1,\cdots, h_{n-1}\in
\overline{C_G(x)}$.

\end{Thm}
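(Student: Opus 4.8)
The statement to prove is Theorem \ref{bigtriangleup-operator-Hochschild-cohomology}, which computes the transferred BV-operator $\widetilde{\Delta}_x$ on $\mathrm H^*(C_G(x),k)$ at the complex level.

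\medskip

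\textbf{Plan.} The strategy is to transport the formula for $\Delta_x$ on $\mathcal H_x^*$ (which exists by Lemma \ref{restriction-Hochschild-cohomology}) through the explicit isomorphism $\iota^x\colon\mathcal H_x^*\xrightarrow{\sim} C^*(C_G(x),k)$ and its inverse $\rho^x$ from Lemma \ref{isomorphism-Hochschild-cohomology} and Theorem \ref{realization-Hochschild-cohomology}, and then simplify. Concretely: start with a cocycle $\psi\colon\overline{C_G(x)}^{\times n}\to k$, apply $\rho^x$ to get $\varphi_x=\rho^x(\psi)\in\mathcal H_x^n$ with $\varphi_x(g_{1,n})=\sum_{i=1}^{n_x}\widehat\varphi_x(h_{i_1},\dots,h_{i_n})x_ig_1\cdots g_n$; apply $\Delta=\Delta_x$, using the explicit formula for $\widehat\Delta$ on $C^{>0}(kG,kG)$ recalled in Section \ref{section-delta-operator}, namely
$$
\Delta(\varphi_x)(g_{1,n-1})=\sum_{g_n\in G}\sum_{i=1}^n(-1)^{i(n-1)}\langle\varphi_x(g_{i,n-1},g_n,g_{1,i-1}),1\rangle g_n^{-1};
$$
and finally apply $\iota^x$, i.e. extract the coefficient of $x$ after multiplying on the right by the inverse of the group-element prefix. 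The target formula
$$
\widetilde{\Delta}_x(\psi)(h_{1,n-1})=\sum_{i=1}^n(-1)^{i(n-1)}\psi(h_{i,n-1},h_{n-1}^{-1}\cdots h_1^{-1}x^{-1},h_{1,i-1})
$$
should then drop out once the bookkeeping is done.

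\medskip

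\textbf{Key steps, in order.} First, I would record precisely how $\rho^x$ represents $\psi$: the values $\varphi_x(g_{1,n})$ lie in $k[g_1\cdots g_n C_x]$, and I need the coefficient of each $x_i=\gamma_{i,x}^{-1}x\gamma_{i,x}$. Second, I would plug $g_j=h_j\in\overline{C_G(x)}$ for the "outer" arguments and keep the summation variable $g_n$ general; the crucial simplification is that $\langle\varphi_x(\cdots),1\rangle$ is nonzero only when the relevant product $g_i\cdots g_{n-1}g_ng_1\cdots g_{i-1}$ times an element of $C_x$ equals $1$, which pins down $g_n$ (given the other arguments are in $C_G(x)$) to be of the form $h_{n-1}^{-1}\cdots h_1^{-1}x^{-1}$ up to conjugation — this is where the argument $h_{n-1}^{-1}\cdots h_1^{-1}x^{-1}$ in the final formula comes from. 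Third, I would check the sign $(-1)^{i(n-1)}$ is preserved (it is, since $\Delta$'s sign does not involve the group-theoretic data), and verify that after applying $\iota^x$ the resulting element of $C^{n-1}(C_G(x),k)$ is exactly $\widehat\Delta(\varphi_x)$ evaluated correctly, matching the claimed formula. Finally, I would note that since all maps $\iota^x,\rho^x$ are chain-homotopy equivalences (indeed isomorphisms here by Lemma \ref{isomorphism-Hochschild-cohomology}), $\widetilde{\Delta}_x$ on cohomology is represented by this chain-level formula, so the commutative square defining $\widetilde{\Delta}_x$ holds.

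\medskip

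\textbf{Main obstacle.} The delicate point is the middle step: carefully tracking the coset bookkeeping when evaluating $\langle\varphi_x(g_{i,n-1},g_n,g_{1,i-1}),1\rangle$. Because $\varphi_x$ takes values in $k[g_1\cdots g_n C_x]$ and the arguments cyclically permute, one must determine for which $g_n\in G$ the coefficient of $1\in kG$ is nonzero and then identify, via the decomposition $G=\bigsqcup_i C_G(x)\gamma_{i,x}$, which $x_i$ contributes and with what scalar. The cyclic permutation of arguments interacts with the relation $x_i=\gamma_{i,x}^{-1}x\gamma_{i,x}$ in a way that requires care to avoid sign or indexing errors; everything else (linearity, the sign $(-1)^{i(n-1)}$, compatibility with differentials) is routine. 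Since this is precisely the content of \cite[Theorem 8.2]{LZ2015}, I would either reproduce that computation or cite it, remarking that the only new ingredient needed here is its re-derivation in the present normalization.
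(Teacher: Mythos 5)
Your plan is correct and is essentially the paper's route: the paper simply quotes this formula from \cite[Theorem 8.2]{LZ2015}, and its proof of the homology analogue (Theorem \ref{bigtriangleup-operator-Hochschild-homology}) is exactly the same diagram chase you propose, namely computing $\iota^x\circ\Delta\circ\rho^x$ with the comparison maps of Theorem \ref{realization-Hochschild-cohomology} and observing that the pairing $\langle\cdot,1\rangle$ together with the coefficient-of-$x$ extraction in $\iota^x$ singles out $g_n=h_{n-1}^{-1}\cdots h_1^{-1}x^{-1}$ (using that the $h_j$ centralize $x$). One small correction: $\iota^x$ and $\rho^x$ are not isomorphisms of complexes (Lemma \ref{isomorphism-Hochschild-cohomology} identifies $\mathcal H_x^*$ with $\Hom_{kC_G(x)}(\Barr_*(kG)\otimes_{kG}k,k)$, and the passage to $C^*(C_G(x),k)$ is only a homotopy deformation retract), but this is harmless since $\iota^x\circ\Delta\circ\rho^x$ is still a chain map inducing $H(\iota^x)\circ\Delta_x\circ H(\iota^x)^{-1}=\widetilde{\Delta}_x$, and your bookkeeping shows it equals the stated formula on the nose.
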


In the second case, $\widehat{\Delta}$ is the Connes' $B$-operator  in the Hochschild homology $\HH_*(kG,kG)$. Recall that for each $x\in X$,
$\mathcal{H}_{x,*}=\bigoplus_{s\geq 0}\mathcal{H}_{x,s}$ is a subcomplex
of the Hochschild chain complex $C_*(kG, kG)=\mathcal{H}_*$, where
$$\mathcal{H}_{x,s}=k[(g_s^{-1}\cdots g_1^{-1}u,g_{1,s})| u\in C_x, g_1,\cdots, g_s\in \overline{G}].$$

\begin{Lem}\label{restriction-Hochschild-homology}  For $x\in X$ and $s\geq 0$, the operator $B: \mathcal{H}_s\longrightarrow
\mathcal{H}_{s+1}$ restricts to $B_x:
\mathcal{H}_{x,s}\longrightarrow \mathcal{H}_{x,s+1}$.
\end{Lem}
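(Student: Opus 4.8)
The plan is to show that the Connes' $B$-operator, when applied to a generator of $\mathcal{H}_{x,s}$, lands in $\mathcal{H}_{x,s+1}$, i.e.\ in the span of elements of the form $(h_{s+1}^{-1}\cdots h_1^{-1}u, h_{1,s+1})$ with $u\in C_x$. Recall from Section~\ref{section2} that
$$B(g_0\otimes\overline{g_{1,s}})=\sum_{i=0}^{s}(-1)^{is}\,1\otimes\overline{g_{i,s}}\otimes\overline{g_0}\otimes\overline{g_{1,i-1}},$$
which in the group-algebra notation reads
$B(g_0,g_{1,s})=\sum_{i=0}^{s}(-1)^{is}(1,g_{i,s},g_0,g_{1,i-1})$ (with the usual convention that a term vanishes if some entry equals $1$, since we use the normalized bar resolution).

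First I would take a typical generator $\alpha=(g_0,g_{1,s})\in\mathcal{H}_{x,s}$, so that by definition $g_0=g_s^{-1}\cdots g_1^{-1}u$ for some $u\in C_x$, equivalently $g_0g_1\cdots g_s\in C_x$ (using $g_s^{-1}\cdots g_1^{-1}C_x=C_x g_s^{-1}\cdots g_1^{-1}$ and that $C_x$ is closed under conjugation). Then for each $i$ with $0\le i\le s$ I would examine the summand $(1,g_{i,s},g_0,g_{1,i-1})$ of $B(\alpha)$, which is an element of $k[G\times\overline{G}^{\times(s+1)}]$ with first coordinate $1$ and the $s+1$ group-entries $g_i,g_{i+1},\dots,g_s,g_0,g_1,\dots,g_{i-1}$. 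To check that this lies in $\mathcal{H}_{x,s+1}$ I must verify that $1 = (\text{product of all }s+1\text{ entries in reverse order})^{-1}\cdot u'$ for some $u'\in C_x$; equivalently, the product of the entries in the natural order, namely $g_ig_{i+1}\cdots g_s g_0 g_1\cdots g_{i-1}$, must lie in $C_x$. But this element is the cyclic rotation $(g_i\cdots g_s)(g_0\cdots g_{i-1})$ of $g_0g_1\cdots g_s$, hence conjugate to $g_0g_1\cdots g_s\in C_x$ by $g_0g_1\cdots g_{i-1}$, so it again lies in the conjugacy class $C_x$. Thus each summand lies in $\mathcal{H}_{x,s+1}$, and therefore $B(\alpha)\in\mathcal{H}_{x,s+1}$.

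Since $B$ is $k$-linear and $\mathcal{H}_{x,s}$ is spanned by such generators, this shows $B(\mathcal{H}_{x,s})\subseteq\mathcal{H}_{x,s+1}$, giving the claimed restriction $B_x\colon\mathcal{H}_{x,s}\to\mathcal{H}_{x,s+1}$. One should also note compatibility with the decomposition $\mathcal{H}_*=\bigoplus_{x\in X}\mathcal{H}_{x,*}$: distinct conjugacy classes are disjoint, so the restrictions $B_x$ on the various summands are consistent with $B$ on the whole complex, and together with Lemma~\ref{restriction-Hochschild-cohomology} (and the fact that $\widehat\Delta$ in degree $0$ is zero) this will eventually give that $\widehat\Delta$ preserves each summand $\widehat{\mathcal H}^*_x$.

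I do not expect any serious obstacle here; the only point requiring a little care is the bookkeeping of the "reverse order" convention relating $(g_0,g_{1,s})\in\mathcal{H}_{x,s}$ to the condition $g_0g_1\cdots g_s\in C_x$, and making sure the normalization (entries in $\overline G$, i.e.\ $\ne 1$) is respected when some $g_i=1$ forces a summand to vanish — but a vanishing summand trivially lies in $\mathcal{H}_{x,s+1}$, so this causes no trouble. The essential content is just the observation that cyclic permutation of a product preserves its conjugacy class, which is exactly why Connes' operator, built from cyclic rotation, respects the Cibils--Solotar-type decomposition.
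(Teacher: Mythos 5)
Your proof is correct and follows essentially the same route as the paper: apply the explicit formula for $B$ to a generator of $\mathcal{H}_{x,s}$ and observe that each summand $(1,g_{i,s},g_0,g_{1,i-1})$ lies in $\mathcal{H}_{x,s+1}$ because the cyclically rotated product $g_i\cdots g_s g_0\cdots g_{i-1}$ is conjugate to $g_0g_1\cdots g_s$ and hence stays in $C_x$. The paper verifies this by the explicit identity $g_{i-1}^{-1}\cdots g_1^{-1}(g_s^{-1}\cdots g_1^{-1}g_0^{-1}xg_0)^{-1}g_s^{-1}\cdots g_i^{-1}\cdot (g_0\cdots g_{i-1})^{-1}x(g_0\cdots g_{i-1})=1$, which is just your conjugation argument written out.
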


\begin{proof} We need to show that $B(\alpha)\in \mathcal{H}_{x,s+1}$ for each $\alpha=(g_s^{-1}\cdots g_1^{-1}g_0^{-1}xg_0,g_{1,s})\in \mathcal{H}_{x,s}$, where $g_0\in G$, $g_1,\cdots, g_s\in \overline{G}$. This follows from the definition of the operator $B$:
$$B(g_s^{-1}\cdots g_1^{-1}g_0^{-1}xg_0,g_{1,s})=(1,g_s^{-1}\cdots g_1^{-1}g_0^{-1}xg_0, g_{1, s})+ \sum_{i=1}^s(-1)^{is}(1,g_{i,s},g_s^{-1}\cdots g_1^{-1}g_0^{-1}xg_0,g_{1,i-1})$$
and for each $0\leq i\leq s$, $$g_{i-1}^{-1}\cdots g_1^{-1}(g_s^{-1}\cdots g_1^{-1}g_0^{-1}xg_0)^{-1}g_s^{-1}\cdots g_i^{-1}\cdot (g_0\cdots g_{i-1})^{-1}x(g_0\cdots g_{i-1})=1.$$
\end{proof}

We can define an operator $\widetilde{B}_x$ by the following commutative diagram
$$\xymatrix{
H_s(\mathcal{H}_{x,*})\ar[r]^-{B_x}\ar[d]_{\wr}  &
H_{s+1}(\mathcal{H}_{x,*})
\ar[d]_{\wr}\\
  \mathrm H_s(C_G(x),k)\ar[r]^-{\widetilde{B}_x}  &
\mathrm H_{s+1}(C_G(x),k),}$$
where the vertical
isomorphisms are given in Theorem \ref{realization-Hochschild-homology}.

\begin{Thm} \label{bigtriangleup-operator-Hochschild-homology} Let
$\widetilde{B}_x: \mathrm H_s(C_G(x),k)\longrightarrow
\mathrm H_{s+1}(C_G(x),k)$ be the map induced by the operator
$B: \HH_s(kG,kG)\longrightarrow \HH_{s+1}(kG,kG)$. Then, at the complex level,
$\widetilde{B}_x$ is defined as follows:
$$\widetilde{B}_x(\gamma)=(1, h_{s}^{-1}\cdots h_1^{-1}x, h_{1, s})+\sum_{i=1}^s(-1)^{is}(1, h_{i,s},h_s^{-1}\cdots h_1^{-1}x,h_{1,i-1})$$
 for $\gamma=(h_{1,s})\in k[\overline{C_G(x)}^{\times s}]$.
\end{Thm}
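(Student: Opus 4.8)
The plan is to move the operator $B_x$ on $\mathcal{H}_{x,*}$ across the split injection $\iota_x\colon C_*(C_G(x),k)\hookrightarrow\mathcal{H}_{x,*}$ of Theorem \ref{realization-Hochschild-homology}. Since $\rho_x\circ\iota_x=\id$ and $\iota_{x,*},\rho_{x,*}$ are mutually inverse isomorphisms on homology, the defining commutative square of $\widetilde{B}_x$ says precisely that $\widetilde{B}_x$ is the map induced by the chain map $\rho_x\circ B_x\circ\iota_x$. Writing $\widetilde{B}_x$ also for the map on $C_*(C_G(x),k)$ given by the displayed formula, it therefore suffices to establish the intertwining identity
\begin{equation*}
\iota_x\circ\widetilde{B}_x \;=\; B_x\circ\iota_x\colon\quad C_*(C_G(x),k)\longrightarrow\mathcal{H}_{x,*+1};
\end{equation*}
applying $\rho_x$ on the left and using $\rho_x\circ\iota_x=\id$ then yields $\widetilde{B}_x=\rho_x\circ B_x\circ\iota_x$, and in particular shows that the formula defines a chain map (compatibility with $\partial$ being inherited from that of $B_x$ and $\iota_x$ together with the injectivity of $\iota_x$).

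To prove the intertwining identity I would first unwind the right-hand side: for $\gamma=(h_{1,s})\in k[\overline{C_G(x)}^{\times s}]$ one has $\iota_x(\gamma)=(h_s^{-1}\cdots h_1^{-1}x,\,h_{1,s})\in\mathcal{H}_{x,s}$, so by Lemma \ref{restriction-Hochschild-homology} and the definition of Connes' operator,
\begin{equation*}
B_x(\iota_x(\gamma))=(1,\,h_s^{-1}\cdots h_1^{-1}x,\,h_{1,s})+\sum_{i=1}^{s}(-1)^{is}\,(1,\,h_{i,s},\,h_s^{-1}\cdots h_1^{-1}x,\,h_{1,i-1}).
\end{equation*}
For the left-hand side I would apply $\iota_x$ termwise to the displayed formula for $\widetilde{B}_x(\gamma)$. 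The key observation is that each summand $(1,\,h_{i,s},\,h_s^{-1}\cdots h_1^{-1}x,\,h_{1,i-1})$, read as an element $(k_1,\dots,k_{s+1})$ of $C_{s+1}(C_G(x),k)$, satisfies $k_1k_2\cdots k_{s+1}=x$: indeed every $h_j$ lies in $C_G(x)$ and hence commutes with $x$, so the telescoping product $h_i\cdots h_s\cdot(h_s^{-1}\cdots h_1^{-1}x)\cdot h_1\cdots h_{i-1}$ collapses to $x$ (and $h_s^{-1}\cdots h_1^{-1}x\cdot h_1\cdots h_s=x$ handles the first summand). Therefore $k_{s+1}^{-1}\cdots k_1^{-1}=x^{-1}$, and $\iota_x(k_1,\dots,k_{s+1})=(x^{-1}x,\,k_1,\dots,k_{s+1})=(1,\,k_1,\dots,k_{s+1})$, which is exactly the corresponding summand of $B_x(\iota_x(\gamma))$, sign included. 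This proves the identity.

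The genuinely routine part is the index bookkeeping in the two telescoping products and checking the signs $(-1)^{is}$ slot by slot, which I would carry out explicitly; there is no conceptual difficulty there. The one point that deserves care — and the step I expect to be the main obstacle in a careful write-up — is keeping track of which coordinate is the distinguished $G$-component of an element of $C_{s+1}(kG,kG)$ as opposed to an ordinary tensor slot, so that the identifications $C_n(C_G(x),k)=k[\overline{C_G(x)}^{\times n}]$ and $\mathcal{H}_{x,n}\subset k[G\times\overline{G}^{\times n}]$ are applied consistently (the leading ``$1$'' plays a different role on the two sides, and it is only the equality $k_1\cdots k_{s+1}=x$ that makes the two roles match up). Once this is pinned down, the computation above is forced, and the theorem follows.
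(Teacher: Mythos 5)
Your proof is correct and is essentially the paper's own argument: the paper's proof is just the remark that the formula follows by chasing the defining commutative diagram via Theorem \ref{realization-Hochschild-homology}, i.e. computing $\widetilde{B}_x=\rho_x\circ B_x\circ \iota_x$, which is exactly what you do (in the equivalent form $\iota_x\circ\widetilde{B}_x=B_x\circ\iota_x$). Your observation that each summand has product of entries equal to $x$, so that $\iota_x$ reproduces (and $\rho_x$ strips off) the leading $1$ in the $G$-component, is precisely the bookkeeping the paper leaves implicit.
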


\begin{proof} By Theorem \ref{realization-Hochschild-homology}, this is straightforward by chasing the above commutative diagram.
\end{proof}

Theorem \ref{theorem-A-infinity} shows that the natural inclusion  $\widehat{\iota}_{x=1}: \widehat{\mathrm H}^*(G, k)\hookrightarrow \widehat{\HH}^*(kG,kG)$ (cf. Remark \ref{remark-deformation-retract2}) is an inclusion of graded algebras. We now further prove that it is an inclusion of BV-algebras.

\begin{Cor} \label{BV-sub-algebra}  Let $k$ be a field and $G$ a finite group. Then  $\widehat{\iota}_{x=1}: \widehat{\mathrm H}^*(G, k)\hookrightarrow \widehat{\HH}^*(kG,kG)$  is an (unitary) embedding of BV-algebras.
\end{Cor}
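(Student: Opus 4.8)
The plan is to deduce the statement directly from the complex-level results already in hand, so the proof is essentially a bookkeeping argument. First I would recall from Theorem~\ref{theorem-A-infinity} (via the identification in Proposition~\ref{isomorphism-of-complexes}) that $\widehat{\iota}_{x=1}$ identifies $\widehat{C}^*(G,k)$ isomorphically with the summand $\widehat{\mathcal H}^*_1$ of the decomposition $\mathcal D^*(kG,kG)=\bigoplus_{x\in X}\widehat{\mathcal H}^*_x$, and that $\widehat{\mathcal H}^*_1$ is closed under the cup product $\cup$; passing to cohomology, this already shows $\widehat{\iota}_{x=1}\colon\widehat{\mathrm H}^*(G,k)\to\widehat{\HH}^*(kG,kG)$ is an embedding of graded-commutative algebras. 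Unitality is read off the explicit formula for $\widehat{\iota}_{x=1}$ in Remark~\ref{remark-deformation-retract2} in degree $0$: the unit $1\in k=C^0(G,k)$ is sent to $1\in kG=\mathcal D^0(kG,kG)$, which is the unit of $\widehat{\HH}^*(kG,kG)$.

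The only new point is that the BV-operator $\widehat{\Delta}$ preserves the summand $\widehat{\mathcal H}^*_1$, and for this I would simply specialize the restriction statements to the trivial conjugacy class $x=1$. In positive degrees, $\widehat{\Delta}=\Delta$ restricts to $\mathcal H^n_1\to\mathcal H^{n-1}_1$ by Lemma~\ref{restriction-Hochschild-cohomology}; in degrees $\le -1$, $\widehat{\Delta}=B$ restricts to $\mathcal H_{1,s}\to\mathcal H_{1,s+1}$ by Lemma~\ref{restriction-Hochschild-homology}; and $\widehat{\Delta}\colon\mathcal D^0(kG,kG)\to\mathcal D^{-1}(kG,kG)$ is the zero map, hence restricts trivially. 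Therefore $\widehat{\mathcal H}^*_1$ is a subcomplex of $\mathcal D^*(kG,kG)$ closed under both $\cup$ and $\widehat{\Delta}$, and upon passing to cohomology, $\widehat{\mathrm H}^*(G,k)$ becomes a subspace of $\widehat{\HH}^*(kG,kG)$ closed under $\cup$ and $\widehat{\Delta}$.

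To conclude, I would invoke the BV identity
$$[\alpha,\beta]=-(-1)^{(|\alpha|-1)|\beta|}\bigl(\widehat{\Delta}(\alpha\cup\beta)-\widehat{\Delta}(\alpha)\cup\beta-(-1)^{|\alpha|}\alpha\cup\widehat{\Delta}(\beta)\bigr)$$
valid in $\widehat{\HH}^*(kG,kG)$: it expresses the Gerstenhaber bracket entirely in terms of $\cup$ and $\widehat{\Delta}$, so any subspace closed under those two operations is automatically closed under $[\cdot,\cdot]$. Hence $\widehat{\mathrm H}^*(G,k)$, with the operations inherited from $\widehat{\HH}^*(kG,kG)$, is closed under all of $\cup$, $[\cdot,\cdot]$ and $\widehat{\Delta}$, and the BV-algebra axioms (graded commutativity, graded Jacobi, the Poisson rule, and the BV identity) hold in it since they are identities that pass to any sub-object closed under the operations; moreover $\widehat{\iota}_{x=1}$ is by construction compatible with all of them, giving the desired unital embedding of BV-algebras. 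As a final remark, transferring $\widehat{\Delta}$ along $\widehat{\iota}_{x=1}\colon\widehat{C}^*(G,k)\xrightarrow{\simeq}\widehat{\mathcal H}^*_1$ and applying Theorems~\ref{bigtriangleup-operator-Hochschild-cohomology} and \ref{bigtriangleup-operator-Hochschild-homology} with $x=1$ identifies the induced BV-operator on $\widehat{\mathrm H}^*(G,k)$ as the dual of Connes' $B$-operator on $C^*(G,k)$ in positive degrees and as Connes' $B$-operator on $C_*(G,k)$ in degrees $\le -1$; by Appendix~A the latter is cohomologically trivial. Since every step reduces to an already-established lemma, there is no genuine obstacle; the only thing requiring care is verifying that $\widehat{\Delta}$ really does restrict to the $x=1$ block, which is exactly the content of Lemmas~\ref{restriction-Hochschild-cohomology} and \ref{restriction-Hochschild-homology}.
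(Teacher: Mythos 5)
Your proposal is correct and follows essentially the same route as the paper: the algebra embedding comes from Theorem~\ref{theorem-A-infinity}, the preservation of $\widehat{\Delta}$ comes from the restriction of $\Delta$ and the Connes' $B$-operator to the $x=1$ summand (you cite Lemmas~\ref{restriction-Hochschild-cohomology} and \ref{restriction-Hochschild-homology}, the paper cites the resulting Theorems~\ref{bigtriangleup-operator-Hochschild-cohomology} and \ref{bigtriangleup-operator-Hochschild-homology}, which is the same content), and closure under the bracket follows because $[\cdot,\cdot]$ is generated by $\cup$ and $\widehat{\Delta}$ via the BV identity. Your additional checks (unitality in degree $0$, the identification of the induced operator with Connes' $B$-operator and its vanishing via Appendix~A) are consistent with, and slightly more explicit than, the paper's argument.
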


\begin{proof} It follows from Theorem \ref{theorem-A-infinity} that the inclusion is an embedding of graded algebras. 
Theorems \ref{bigtriangleup-operator-Hochschild-cohomology} and \ref{bigtriangleup-operator-Hochschild-homology} show that this inclusion preserves the operator $\widehat{\Delta}$. Since this operator together with the cup product $\cup$ generates the Lie bracket $[\cdot, \cdot]$ on  $\widehat{\HH}^*(kG,kG)$,  we deduce that the Lie bracket $[\cdot, \cdot]$
restricts to $\widehat{\mathrm H}^*(G, k)=\widehat{\mathrm H}^*(C_G(1), k)$. This proves the corollary. 
\end{proof}

\begin{Rem}
We remark that $\widehat{\Delta}$ restricts to zero on  $\widehat{\mathrm H}^*(G,k)$ due to the fact that the Connes' $B$-operator is trivial in the group homology $\mathrm H_*(G,k)$. In Appendix A, we shall provide a proof of this non-trivial result, which is known and only implicit in the literature. 

Let $G$ be a finite abelian group. By Corollary \ref{cor-abelian-A}, we have an isomorphism of graded algebras
$$\widehat{\HH}^*(kG,kG)\simeq kG\otimes_k\widehat{\mathrm H}^*(G,k).$$ Since the Lie bracket on $\widehat{\HH}^*(kG,kG)$ is in general  nontrivial (see e.g.  \cite[Corollary 4.2]{LZ}) and the Lie bracket on $\widehat{\mathrm H}^*(G, k)$ is always trivial,   the above isomorphism is not an isomorphism of BV-algebras.
\end{Rem}

\begin{Rem} \label{BV-infinity-algebra} Notice that the restrictions of $\widehat{\Delta}$ to other summands $\widehat{\mathrm H}^*(C_G(x),k)$ (where $x\neq 1$) in the additive decomposition are non-trivial in general. Notice also that although the $\widehat{\Delta}$-operator is trivial on the Tate  cohomology $\widehat{\mathrm H}^*(G,k)$, it is not trivial at the complex level.  We conjecture that the Tate-Hochschild cochain complex $\mathcal{D}^*(kG, kG)$ is a BV$_{\infty}$-algebra and the Tate cochain complex $\widehat{C}^*(G, k)$ is a BV$_{\infty}$ subalgebra.  Equivalently, we conjecture that the operad of the frame little $2$-discs acts on $\mathcal D^*(kG, kG)$ and this action restricts to the subcomplex $\widehat{C}^*(G, k)$. 
\end{Rem}


\bigskip
Let us  consider  the stable Hochschild homology $\HH^{st}_{*}(kG, kG)$ which has been studied in \cite{HHKM2005} \cite{LiuZhouZimmermann2012}. 
From Remark \ref{duality-in-Tate-Hochschild-cohomology}, we have that $\HH^{st}_{m}(kG, kG)\cong \widehat{\HH}^{-m-1}(kG, kG)$ for $m\geq 0$.  Hence $\HH_*^{st}(kG, kG)$  is computed by the following truncated (at degree $-1$) complex of $\mathcal{D}^*(kG, kG)$, 
 $$\widetilde{C}_{*}(kG, kG): \quad \cdots\xrightarrow{\partial'_p} C_{p-1}(kG, kG) \xrightarrow{\partial'_{p-1}} \cdots \xrightarrow{\partial'_2}C_1(kG, kG) \xrightarrow{\partial'_1} \Ker(\tau) \rightarrow 0$$ 
where $\widetilde{C}_{-1}(kG, kG)=\Ker(\tau)$ and $\widetilde{C}_{-p-1}(kG, kG)=C_p(kG, kG)$ for $p>0$; and we recall that $\partial_p'=(-1)^{-p-1}\partial_p$ (cf. Remark \ref{remark-signs-change}).   Note that the restriction of the cup product $\cup$ to $\widetilde{C}_{*}(kG, kG)$  is strictly associative (since $m_3=0$ when restricted to $\mathcal D^{<0}(kG, kG)$) and compatible with the differential $\partial'$. 
\begin{Rem}\label{remark-restriction-negative}
In general, the restriction of $\cup$ to   the whole negative part $\mathcal D^{<0}(kG, kG)=C_*(kG, kG)$, is not compatible with $\partial'$: 
For $g_0, h_0\in D^{-1}(kG, kG)=C_0(kG, kG), $ we have that  $g_0\cup h_0\in C_1(kG, kG)$ and $$\partial'_1(g_0\cup h_0)=\sum_{g\in G} \partial_1((gh_0, g^{-1} g_0))=\sum_{g\in G} (gh_0g^{-1}g_0-g^{-1}g_0gh_0),$$ which is not zero in general,  but $\partial'_0(g_0)=0=\partial'_0(h_0)$ in $C_*(kG, kG)$. Hence $\cup$ is not well-defined on the whole $H^{-*-1}(\mathcal D^{<0}(kG, kG))=\HH_*(kG, kG)$, but  it is well-defined on the subspace $\HH^{st}_*(kG, kG)\subset \HH_*(kG, kG)$. 
\end{Rem}

Analogously, let us denote by $\widetilde{C}_{*}(G, k)$  the truncated (at degree $-1$) complex  of the Tate cochain complex $\widehat{C}^*(G, k)$. The cohomology of this complex is denoted by $\mathrm{H}^{st}_{-*-1}(G, k)$, namely $\mathrm H^{st}_m(G, k)=H^{-m-1}(\widetilde C_{*}(G, k))$ for $m\geq 0$.   Then the  homotopy deformation retract in Remark \ref{remark-deformation-retract2}
 induces the following  additive decomposition
$$\HH^{st}_{*}(kG, kG)\simeq \bigoplus_{x\in X}\mathrm H^{st}_{*}(C_G(x),k).$$ As a consequence, we have the following result. 

\begin{Thm}\label{theorem-stable-Hochschild}
The stable Hochschild homology $\HH_{-*-1}^{st}(kG, kG)$, equipped with the Connes' $B$-operator and the cup product $\cup$, is a BV-algebra (without unit). Moreover, $\mathrm H^{st}_{-*-1}(G, k)$ is a BV subalgebra of $\HH_{-*-1}^{st}(kG, kG)$. \end{Thm}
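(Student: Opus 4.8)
The plan is to identify the stable Hochschild homology, together with its cup product and Connes' $B$-operator, with the strictly negative part of the BV-algebra $\widehat{\HH}^*(kG,kG)$ from Theorem~\ref{Theorem-BV-algebra}, and then to observe that this part is automatically a (non-unital) sub-BV-algebra for degree reasons. First I would record the identification: as a graded vector space $\HH^{st}_{-*-1}(kG,kG)$ is $\widehat{\HH}^{<0}(kG,kG):=\bigoplus_{m\ge 0}\widehat{\HH}^{-m-1}(kG,kG)$, via $\HH^{st}_m(kG,kG)\simeq\widehat{\HH}^{-m-1}(kG,kG)$; this is already noted before the statement and is realised at the chain level by the truncated subcomplex $\widetilde C_*(kG,kG)$. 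On $\widetilde C_*(kG,kG)$ the cup product $\cup$ is the restriction of the product on $\mathcal D^*(kG,kG)$, it is strictly associative (since $m_3$ vanishes on $\mathcal D^{<0}(kG,kG)$, see Section~\ref{section2}) and compatible with $\partial'$, and $\widehat{\Delta}=B$ restricts to a square-zero chain map; hence both descend to $H^*(\widetilde C_*(kG,kG))=\HH^{st}_{-*-1}(kG,kG)$, and the resulting operations coincide with the restrictions to $\widehat{\HH}^{<0}(kG,kG)$ of $\cup$ and $\widehat{\Delta}$ on $\widehat{\HH}^*(kG,kG)$.

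Next I would check three closure statements inside $\widehat{\HH}^*(kG,kG)$. By Case~2 of Definition~\ref{Def-cup-product}, $C_s\cup C_t\subseteq C_{s+t+1}$, i.e. $\widehat{\HH}^i\cup\widehat{\HH}^j\subseteq\widehat{\HH}^{i+j}$ with $i+j<0$ whenever $i,j<0$, so $\widehat{\HH}^{<0}(kG,kG)$ is a subalgebra. Since $\widehat{\Delta}$ has degree $-1$ it maps $\widehat{\HH}^{-m-1}$ into $\widehat{\HH}^{-m-2}$, so $\widehat{\HH}^{<0}(kG,kG)$ is stable under $\widehat{\Delta}$. Finally the Gerstenhaber bracket has degree $-1$, and alternatively it is expressed through $\cup$ and $\widehat{\Delta}$ by the BV identity whose right-hand side lies in $\widehat{\HH}^{<0}(kG,kG)$ as soon as its arguments do; either way $\widehat{\HH}^{<0}(kG,kG)$ is stable under $[\cdot,\cdot]$.

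With these closures in hand the BV-algebra axioms for $(\HH^{st}_{-*-1}(kG,kG),\cup,[\cdot,\cdot],\widehat{\Delta})$ are automatic: graded-commutativity and associativity of $\cup$, the graded Jacobi identity, the Poisson rule, $\widehat{\Delta}^2=0$ and the BV identity all hold because they hold in $\widehat{\HH}^*(kG,kG)$ and every element and operation involved stays in $\widehat{\HH}^{<0}(kG,kG)$. The only feature not inherited is the unit $1\in\widehat{\HH}^0(kG,kG)$, which lies outside $\widehat{\HH}^{<0}(kG,kG)$; this accounts for the ``without unit'' clause. For the last assertion I would invoke Corollary~\ref{BV-sub-algebra}: $\widehat{\iota}_{x=1}\colon\widehat{\mathrm H}^*(G,k)\hookrightarrow\widehat{\HH}^*(kG,kG)$ is an embedding of BV-algebras, hence compatible with $\cup$, $\widehat{\Delta}$ and $[\cdot,\cdot]$; restricting to strictly negative degrees yields an embedding $\mathrm H^{st}_{-*-1}(G,k)=\widehat{\mathrm H}^{<0}(G,k)\hookrightarrow\widehat{\HH}^{<0}(kG,kG)=\HH^{st}_{-*-1}(kG,kG)$ compatible with the non-unital BV structures, which is precisely the statement that $\mathrm H^{st}_{-*-1}(G,k)$ is a BV-subalgebra.

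The delicate point is not a computation but the well-definedness of $\cup$ on homology: as stressed in Remark~\ref{remark-restriction-negative}, the chain-level cup product on the full Hochschild chain complex $C_*(kG,kG)$ is not compatible with the differential, so $\cup$ does not descend to all of $\HH_*(kG,kG)$. One must therefore work with the truncated complex $\widetilde C_*(kG,kG)$ (equivalently, with the negative part of $\mathcal D^*(kG,kG)$, which is exactly where $\cup$ becomes associative and differential-compatible), and treat the boundary degree $m=0$ — where $\widehat{\HH}^{-1}(kG,kG)$ is only the subspace $\HH^{st}_0(kG,kG)$ of $\HH_0(kG,kG)$ — via the truncation at $\Ker(\tau)$. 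Once this is set up correctly, everything else is bookkeeping with degrees.
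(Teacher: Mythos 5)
Your argument is correct and takes essentially the same route as the paper: the paper sets up the identification $\HH^{st}_m(kG,kG)\simeq\widehat{\HH}^{-m-1}(kG,kG)$ via the truncated complex $\widetilde C_*(kG,kG)$ (where $\cup$ is associative and compatible with $\partial'$) in the paragraphs immediately preceding the statement, and its proof then simply cites Corollary \ref{BV-sub-algebra}. The degree-closure checks and the inheritance of the BV axioms from $\widehat{\HH}^*(kG,kG)$ that you spell out are exactly the steps the paper leaves implicit.
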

\begin{proof}
This follows from Corollary \ref{BV-sub-algebra}. \end{proof}

Denote by $BG$ the classifying space of a finite group $G$. There is a well-known isomorphism between the Hochschild homology $\HH_*(kG, kG)$ and the singular homology $\mathrm H_*(LBG, k)$ of the free loop space $LBG:=\Map(S^1, BG)$ of $BG$ (cf. \cite[7.3.13 Corollary]{Lod}).  Under this isomorphism, the Connes' $B$-operator on $\HH_*(kG, kG)$ corresponds to the $S^1$-action  on $\mathrm H_*(LBG,k)$ (cf. \cite{Lod}).  We denote by $\mathrm H^{st}_*(LBG, k)$ the subspace of $\mathrm H_*(LBG, k)$ corresponding to $\HH_*^{st}(kG, kG)$  under the above isomorphism. Transferring the cup product on $\HH_*^{st}(kG, kG)$ to  $\mathrm H^{st}_*(LBG, k)$, we obtain the following result. 

\begin{Cor}\label{cor-bv-free-loop}
Let $G$ be a finite group and $k$ be a field. Then   $\mathrm H^{st}_{-*-1}(LBG, k)$ equipped with the  $S^1$-action  and the transferred product, is a BV-algebra (without unit).  
\end{Cor}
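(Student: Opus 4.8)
The statement to prove is Corollary \ref{cor-bv-free-loop}: that $\mathrm H^{st}_{-*-1}(LBG, k)$, equipped with the $S^1$-action and the transferred product, is a BV-algebra (without unit).

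The plan is to deduce this directly from Theorem \ref{theorem-stable-Hochschild} by transport of structure along the well-known isomorphism $\HH_*(kG, kG) \simeq \mathrm H_*(LBG, k)$. First I would invoke the classical isomorphism of graded vector spaces $\HH_*(kG, kG) \cong \mathrm H_*(LBG, k)$ from \cite[7.3.13 Corollary]{Lod}, which identifies the Connes' $B$-operator on the left-hand side with the operator induced by the canonical $S^1$-action on the free loop space $LBG$ on the right-hand side (also \cite{Lod}). Since $\HH^{st}_*(kG, kG)$ is by definition a subspace of $\HH_*(kG, kG)$ that is stable under the Connes' $B$-operator (this stability is exactly what makes $B$ restrict, as used in Theorem \ref{theorem-stable-Hochschild}), its image $\mathrm H^{st}_*(LBG, k)$ under the isomorphism is a subspace of $\mathrm H_*(LBG, k)$ stable under the $S^1$-action operator. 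Transporting the cup product $\cup$ on $\HH^{st}_*(kG, kG)$ across the isomorphism yields a product on $\mathrm H^{st}_*(LBG, k)$, and the BV-algebra axioms are preserved because they are purely algebraic identities among the product, the degree $-1$ square-zero operator, and the induced bracket, all of which are carried bijectively across the isomorphism.

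Concretely, the key steps in order would be: (1) recall the Loday isomorphism and the matching of $B$ with the $S^1$-action; (2) observe that $\mathrm H^{st}_*(LBG, k)$ is defined precisely as the image of $\HH^{st}_*(kG, kG)$, so the isomorphism restricts to a graded isomorphism $\HH^{st}_{-*-1}(kG, kG) \xrightarrow{\sim} \mathrm H^{st}_{-*-1}(LBG, k)$; (3) by Theorem \ref{theorem-stable-Hochschild} the source is a (non-unital) BV-algebra, with BV-operator the Connes' $B$-operator; (4) since a BV-algebra structure is the data of an associative graded-commutative product together with a square-zero degree $-1$ operator satisfying the seven-term relation (equivalently, the relation expressing the bracket as the deviation of $\widehat\Delta$ from being a derivation), and every piece of this data and every relation transports along a graded isomorphism, $\mathrm H^{st}_{-*-1}(LBG, k)$ inherits the structure of a (non-unital) BV-algebra; (5) under the transport, the BV-operator on the target is exactly the operator induced by the $S^1$-action, by step (1), and the product is the transferred product, as claimed.

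I do not expect any serious obstacle here: the statement is a transport-of-structure corollary of Theorem \ref{theorem-stable-Hochschild}, and the only subtlety is bookkeeping — making sure that the Loday isomorphism does restrict to the stable subspaces (which is immediate from the definition of $\mathrm H^{st}_*(LBG,k)$) and that the $S^1$-action operator really corresponds to Connes' $B$ under this restriction (which is the classical statement recalled just before the corollary). If anything counts as "the hard part," it is merely being careful that the degree conventions and the lack of a unit match up: the non-unitality is inherited because $\widetilde C_*(kG,kG)$ is a truncation, and there is no extra work to do on the topological side. Thus the proof is short: "This follows from Theorem \ref{theorem-stable-Hochschild} by transport of structure along the isomorphism $\HH_*(kG,kG)\simeq \mathrm H_*(LBG,k)$ of \cite[7.3.13 Corollary]{Lod}, under which Connes' $B$-operator corresponds to the $S^1$-action."
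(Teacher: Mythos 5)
Your proposal is correct and matches the paper's own argument: the paper proves this corollary exactly by combining Theorem \ref{theorem-stable-Hochschild} with the preceding discussion of the Loday isomorphism $\HH_*(kG,kG)\simeq \mathrm H_*(LBG,k)$, under which Connes' $B$-operator corresponds to the $S^1$-action, and transporting the cup product to the stable subspace. Your added bookkeeping about stability of the stable subspace and non-unitality is consistent with, and merely elaborates, what the paper leaves implicit.
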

\begin{proof}
This follows from  Theorem \ref{theorem-stable-Hochschild} and the above analysis.  
\end{proof}

\begin{Rem}
Clearly, $\mathrm H_m^{st}(LBG, k)=\mathrm H_m(LBG, k)$ for $m>0$ and $\HH_0^{st}(kG, kG)\cong \mathrm H_0^{st}(LBG, k)\subset \mathrm H_0(LBG, k)$. It would be interesting to give a topological construction of the transferred  product on $\mathrm H_{-*-1}^{st}(LBG, k)$. 
\end{Rem}

\section{The symmetric group of degree $3$}

In this section, we use our results to compute the BV structure of the Tate-Hochschild cohomology
for symmetric group of degree $3$ over a field $k$ of characteristic $3$. For convenience, we write the BV-operator $\widehat{\Delta}$ in $\widehat{\HH}^*(kG, kG)$ as $\Delta$ in this section. 

Recall that in a BV-algebra, there is the following equation (see
\cite{Getzler1994}; here we have changed the original equation
according to the sign convention in Remark \ref{sign-convention} and
 we write   $\delta\gamma$ instead of $\delta\cup \gamma$):
$$\Delta(\alpha\beta\gamma)
=(-1)^{|\alpha||\beta||\gamma|}[(-1)^{|\gamma|}\Delta(\alpha\beta)\gamma
+ \alpha\Delta(\beta\gamma) +
(-1)^{|\alpha||\beta|}\beta\Delta(\alpha\gamma)$$
$$ - (-1)^{|\alpha|}
\Delta(\alpha)\beta\gamma -
(-1)^{|\alpha|+|\beta|-|\alpha||\gamma|}\alpha(\Delta(\beta))\gamma
-
(-1)^{|\alpha|+|\beta|+|\gamma|}\alpha\beta\Delta(\gamma)],$$
where $\alpha, \beta, \gamma$ are homogeneous elements.
So in order to compute the $\Delta$-operator in $\widehat{\HH}^*(kG,kG)$,
it suffices to find the value of $\Delta$ on each generator
and the value of $\Delta$ on the cup product of every two generators. Also recall that we can use the cup product formula, the
$\Delta$-operator formula and the following formulas to
compute the Lie bracket in a BV-algebra:
$$[\alpha,\beta]=-(-1)^{(|\alpha|-1)|\beta|}(\Delta(\alpha\cup\beta)-\Delta(\alpha)\cup\beta-(-1)^{|\alpha|}\alpha\cup\Delta(\beta)),$$
$$[\alpha,\beta]=-(-1)^{(|\alpha|-1)(|\beta|-1)}[\beta,\alpha].$$

Notice that the associative algebra structure of the positive part $\widehat{\HH}^{\geq 0}(kS_3,kS_3)$ has been determined by Siegel and
Witherspoon in \cite{SW1999} and the associative algebra structure of the whole algebra $\widehat{\HH}^*(kG,kG)$ has been determined by Nguyen in \cite{Nguyen2012}. Moreover, the $\Delta$ operator and the Lie bracket of the positive part $\widehat{\HH}^{\geq 0}(kS_3,kS_3)$ has been computed by the first and the third named authors in \cite{LZ2015}.

Let
$G=S_3=\langle a,b | a^3=1=b^2, bab=a^{-1}\rangle$. Choose the
conjugacy class representatives as $1,a,b$. The corresponding
centralizers are $H_1=G, H_2=\langle a\rangle$ and $H_3=\langle
b\rangle$. So $\widehat{\HH}^*(kS_3)\simeq \widehat{H}^*(S_3)\oplus
\widehat{\mathrm H}^*(\langle a\rangle)\oplus \widehat{\mathrm H}^*(\langle b\rangle)$. The algebra
structures of $\widehat{\mathrm H}^*(S_3)$, of $\widehat{\mathrm H}^*(\langle a\rangle)$, and of
$\widehat{\mathrm H}^*(\langle b\rangle)$ are known (see \cite{Nguyen2012}). $\widehat{\mathrm H}^*(S_3)$ is of the form $k[x]/(x^2)\otimes_kk[z,z^{-1}]$, where $x,z,z^{-1}$
are of degrees $3,4,-4$, respectively, subject to the
graded-commutative relations. $\mathrm H^*(\langle
a\rangle)$ is of the form $k[w_1]/({w_1}^2)\otimes_kk[w_2,w_2^{-1}]$, where $w_1,w_2,w_2^{-1}$
are of degrees $1,2,-2$, respectively, subject to the
graded-commutative relations. $\widehat{\mathrm H}^*(\langle
b\rangle)=0$, since $k\langle b\rangle$ is
semisimple. Identify the elements $x, z$ with their images under $\gamma_1$ in
$\widehat{\HH}^*(kG,kG)$ and denote by $W_1, W_2, W_1^{-1}, W_2^{-1}$ the images of the
elements (resp.) $w_1, w_2, w_1^{-1}, w_2^{-1}$ under $\gamma_2$, and put $E_i:=\gamma_i(1)$ $(i=1,2)$ and $C:=E_2+E_1=E_2+1$. Then
Nguyen proved in \cite{Nguyen2012} the following
presentation for the Tate-Hochschild cohomology algebra
$\widehat{\HH}^*(kG,kG)$: it is generated as an
algebra by elements $x,z,z^{-1},C,W_1,W_2$, and $W_2^{-1}$ of
degrees (resp.) $3,4,-4,0,1,2,$ and $-2$, subject to the relations
$$xW_1=0,\quad xW_2=zW_1,\quad z^{-1}W_1=(xz^{-1})W_2^{-1},$$
$$
C^2=CW_2^{-1}=CW_i=0\quad (i=1,2),$$
$$
W_2^2=zC,\quad W_2^{-2}=z^{-1}C,\quad
W_1W_2=xC,\quad W_1W_2^{-1}=xz^{-1}C,$$ together with the graded commutative relations. Observe that although both $w_2^3$ and $w_2^{-3}$ are nonzero in $\widehat{\mathrm H}^*(\langle a\rangle)$, we have that $W_2^3=W_2^2W_2=zCW_2=0$ and $W_2^{-3}=W_2^{-2}W_2^{-1}=z^{-1}CW_2^{-1}=0$ in $\widehat{\HH}^*(kG,kG)$. Moreover, $w_2w_2^{-1}=1$ in $\widehat{\mathrm H}^*(\langle a\rangle)$ but $W_2W_2^{-1}=C\neq 1$ in $\widehat{\HH}^*(kG,kG)$.

By Section 7, the
operator $\Delta: \widehat{\HH}^n(kS_3)\longrightarrow
\widehat{\HH}^{n-1}(kS_3)$ restricts to the operators
$\widehat{\Delta}_b: \widehat{\mathrm H}^n(\langle b\rangle)\longrightarrow
\widehat{\mathrm H}^{n-1}(\langle b\rangle)$, $\widehat{\Delta}_a: \widehat{\mathrm H}^n(\langle
a\rangle)\longrightarrow \widehat{\mathrm H}^{n-1}(\langle a\rangle)$, and
$\widehat{\Delta}_1: \widehat{\mathrm H}^n(S_3)\longrightarrow \widehat{\mathrm H}^{n-1}(S_3)$.
Both $\widehat{\Delta}_1$ and $\widehat{\Delta}_b$ are zero maps  and we only need to consider $\widehat{\Delta}_a: \widehat{\mathrm H}^n(\langle
a\rangle)\longrightarrow \widehat{\mathrm H}^{n-1}(\langle a\rangle)$.    In \cite{LZ2015}, we have computed $\widehat{\Delta}_a$ for the positive part $\widehat{\mathrm H}^{> 0}(\langle
a\rangle)$ up to degree $4$: $\widehat{\Delta}_a(w_2^2)=0$, $\widehat{\Delta}_a(w_1w_2)=-w_2$, $\widehat{\Delta}_a(w_2)=0$, $\widehat{\Delta}_a(w_1)=-1$. By the duality mentioned in Remark \ref{duality-in-Tate-Hochschild-cohomology}, we get the values of $\widehat{\Delta}_a$ for the negative part $\widehat{\mathrm H}^{< 0}(\langle
a\rangle)$ up to degree $-4$: $\widehat{\Delta}_a(w_1w_2^{-1})=-w_2^{-1}$, $\widehat{\Delta}_a(w_2^{-1})=0$, $\widehat{\Delta}_a(w_1w_2^{-2})=-w_2^{-2}$, $\widehat{\Delta}_a(w_2^{-2})=0$. Moreover, in degree $0$, we have $\widehat{\Delta}_a(1)=0$. From these results we can compute the values of $\Delta$ on the elements of degrees between $4$ and $-4$ in $\widehat{\HH}^*(kS_3)$. For example, $\Delta(x)=0$ since $x\in \widehat{\mathrm H}^*(S_3)$ and $\widehat{\Delta}_1$ is trivial; $\Delta(W_1W_2)=-W_2$, the reason is as follows: under the additive decomposition, $W_1W_2$ corresponds to the element $x+w_1w_2$, $\widehat{\Delta}_1(x)=0$, $\widehat{\Delta}_a(w_1w_2)=-w_2$; $\Delta(W_2^{-1})=0$, the reason is as follows: $W_2^{-1}$ is an element of degree $-2$, under the additive decomposition, it corresponds to an element $\lambda w_2^{-1}$ with some $\lambda\in k$, but $\widehat{\Delta}_a(w_2^{-1})=0$; $\Delta(W_1)=-E_2=1-C$ (here $1=E_1$ denotes the unit element of $\widehat{\HH}^*(kS_3)$) since $\widehat{\Delta}_a(w_1)=-1$ (here $1$ denotes the unit element of $\widehat{\mathrm H}^*(\langle
a\rangle)$); etc.

We now compute the Lie brackets. Since we have the following Poisson rule: $[\alpha\cup
\beta, \gamma]=[\alpha, \gamma]\cup \beta +
(-1)^{|\alpha|(|\gamma|-1)}\alpha\cup[\beta, \gamma]$, it suffices
to write down the Lie brackets between generators in
$\widehat{\HH}^*(kS_3)$. There are $49$ cases, we list them explicitly. In the following computations, we shall freely use the three formulas mentioned at the beginning of this section.

(1) $[x,x]=0$. (2) $[x,z]=0$. (3) $[z,x]=0$. (4) $[x,z^{-1}]=0$. (5) $[z^{-1},x]=0$.

The cases (1) to (5) can be seen from the facts that $\widehat{\Delta}_1$ and therefore the Lie brackets are trivial over $\widehat{\mathrm H}^*(S_3)$.

(6) $[x,C]=-(\Delta(xC)-\Delta(x)C+x\Delta(C))=-\Delta(W_1W_2)=W_2$, since $\Delta(x)=\Delta(C)=0$.

(7) $[C,x]=-[x,C]=-W_2$.

(8) $[x,W_1]=-(\Delta(xW_1)-\Delta(x)W_1+x\Delta(W_1))=-x(-E_2)=x(C-1)$, since $xW_1=0, \Delta(x)=0$ and $\Delta(W_1)=-E_2$.

(9) $[W_1,x]=-[x,W_1]=x(1-C).$

(10) $[x,W_2]=-(\Delta(xW_2)-\Delta(x)W_2+x\Delta(W_2))=-\Delta(xW_2)=0$. In the last step, we use the fact that $\Delta(xW_2)=0$. The reason is as follows:  $xW_2$ is an element of degree $5$,
under the additive decomposition, it corresponds to an element in
$\mathrm H^*(\langle a\rangle)$ and has the form $\lambda w_1{w_2}^2$ with some $\lambda\in k$. Using the formula for $\Delta$-operator it is easy to show that $\widehat{\Delta}_a(w_1{w_2}^2)=0.$

(11) $[W_2,x]=0.$

(12) $[x,W_2^{-1}]=-(\Delta(xW_2^{-1})-\Delta(x)W_2^{-1}+x\Delta(W_2^{-1}))=-\Delta(xW_2^{-1})=-\Delta(W_1)=C-1$, since $xW_2^{-1}=xzx^{-1}z^{-1}W_1=zxx^{-1}z^{-1}W_1=W_1$.

(13) $[W_2^{-1},x]=-[x,W_2^{-1}]=1-C$.

(14) $[x,x]=0$. (15) $[x,z]=0$. (16) $[z,x]=0$. (Since the Lie brackets are trivial over $\widehat{\mathrm H}^*(S_3)$.)

(17) $[z,C]=-(\Delta(zC)-\Delta(z)C-z\Delta(C))=-\Delta(zC)=-\Delta(W_2^2)=0$.

(18) $[C,z]=0$.

(19) $[z,W_1]=\Delta(zW_1)-\Delta(z)W_1-z\Delta(W_1)=\Delta(xW_2)-z(1-C)=-z(1-C)=z(C-1)$, since in the above we have computed that $\Delta(xW_2)=0$.

(20) $[W_1,z]=-[z,W_1]=z(1-C)$.

(21) $[z,W_2]=-(\Delta(zW_2)-\Delta(z)W_2-z\Delta(W_2))=-\Delta(zW_2)=0$. The reason for the last step is as follows: $zW_2$ is an element of degree $6$,
under the additive decomposition, it corresponds to an element in
$\mathrm H^*(\langle a\rangle)$ and has the form $\lambda {w_2}^3$ with some $\lambda\in k$. Using the formula for $\Delta$-operator it is easy to show that $\widehat{\Delta}_a({w_2}^3)=0.$

(22) $[W_2,z]=0.$

(23) $[z,W_2^{-1}]=-(\Delta(zW_2^{-1})-\Delta(z)W_2^{-1}-z\Delta(W_2^{-1}))=0$. Notice that $zW_2^{-1}$ is an element of degree $2$, under the additive decomposition, it corresponds to an element in $\mathrm H^*(\langle a\rangle)$ and has the form $\lambda {w_2}$ with some $\lambda\in k$. However, $\widehat{\Delta}_a({w_2})=0$.

(24) $[W_2^{-1},z]=0$.

(25) $[z^{-1},z^{-1}]=0$.

(26) $[z^{-1},C]=-(\Delta(z^{-1}C)-\Delta(z^{-1})C-z^{-1}\Delta(C))=0$. The reason is as follows: $z^{-1}C=W_2^{-2}$ is an element of degree $-4$, under the additive decomposition, it corresponds to an element $z^{-1}+\lambda w_2^{-2}$ with some $\lambda\in k$, $\widehat{\Delta}_1(z^{-1})=0$, $\widehat{\Delta}_a(w_2^{-2})=0$.

(27) $[C,z^{-1}]=0$.

(28) $[z^{-1},W_1]=\Delta(z^{-1}W_1)-\Delta(z^{-1})W_1-z^{-1}\Delta(W_1)=-z^{-1}\Delta(W_1)=z^{-1}(C-1)$. Notice that $\Delta(z^{-1}W_1)=0$, since $z^{-1}W_1$ is an element of degree $-3$, under the additive decomposition, it corresponds to the element $w_1w_2^{-2}$ and $\widehat{\Delta}_a(w_1w_2^{-2})=0$.

(29) $[W_1,z^{-1}]=-[z^{-1},W_1]=z^{-1}(1-C)$.

(30) $[z^{-1},W_2]=-(\Delta(z^{-1}W_2)-\Delta(z^{-1})W_2-z^{-1}\Delta(W_2))=-\Delta(z^{-1}W_2)=0$. The reason for the last step is as follows: $z^{-1}W_2$ is an element of degree $-2$, under the additive decomposition, it corresponds to an element $\lambda w_2^{-1}$  with some $\lambda\in k$, but $\widehat{\Delta}_a(w_2^{-1})=0$.

(31) $[W_2,z^{-1}]=0$.

(32) $[z^{-1},W_2^{-1}]=-(\Delta(z^{-1}W_2^{-1})-\Delta(z^{-1})W_2^{-1}-z^{-1}\Delta(W_2^{-1}))=-\Delta(z^{-1}W_2^{-1})=0$. The reason for the last step is as follows: $z^{-1}W_2^{-1}$ is an element of degree $-6$, under the additive decomposition, it corresponds to an element $\lambda w_2^{-3}$  with some $\lambda\in k$, but $\widehat{\Delta}_a(w_2^{-3})=0$.

(33) $[W_2,z^{-1}]=0$.

(34) $[C,C]=-(\Delta(C^2)-\Delta(C)C-C\Delta(C))=0$.

(35) $[C,W_1]=-(\Delta(CW_1)-\Delta(C)W_1-C\Delta(W_1))=C\Delta(W_1)=C(1-C)=C$.

(36) $[W_1,C]=[C,W_1]=-C$.

(37) $[C,W_2]=-(\Delta(CW_2)-\Delta(C)W_2-C\Delta(W_2))=0$.

(38) $[W_2,C]=0$.

(39) $[C,W_2^{-1}]=-(\Delta(CW_2^{-1})-\Delta(C)W_2^{-1}-C\Delta(W_2^{-1}))=0$.

(40) $[W_2^{-1},C]=0$.

(41) $[W_1,W_1]=-[W_1,W_1]=0$.

(42) $[W_1,W_2]=-(\Delta(W_1W_2)-\Delta(W_1)W_2+W_1\Delta(W_2))=W_2+(1-C)W_2=-W_2$.

(43) $[W_2,W_1]=-[W_1,W_2]=W_2$.

(44) $[W_1,W_2^{-1}]=-(\Delta(W_1W_2^{-1})-\Delta(W_1)W_2^{-1}+W_1\Delta(W_2^{-1}))=-\Delta(z^{-1}x)+(1-C)W_2^{-1}=W_2^{-1}$.

(45) $[W_2^{-1},W_1]=-[W_1,W_2^{-1}]=-W_2^{-1}$.

(46) $[W_2,W_2]=-(\Delta(W_2^{2})-\Delta(W_2)W_2-W_2\Delta(W_2))=0$. The reason that $\Delta(W_2^{2})=0$ is as follows: $W_2^{2}=zC$ is an element of degree $4$, under the additive decomposition, it corresponds to an element $z+\lambda w_2^{2}$ with some $\lambda\in k$, $\widehat{\Delta}_1(z)=0$, $\widehat{\Delta}_a(w_2^{2})=0$.

(47) $[W_2,W_2^{-1}]=-(\Delta(W_2W_2^{-1})-\Delta(W_2)W_2^{-1}-W_2\Delta(W_2^{-1}))=-\Delta(W_2W_2^{-1})=0$, since $W_2W_2^{-1}$ is an element of degree $0$.

(48) $[W_2^{-1},W_2]=0$.

(49) $[W_2^{-1},W_2^{-1}]=-(\Delta(W_2^{-2})-\Delta(W_2^{-1})W_2^{-1}-W_2^{-1}\Delta(W_2^{-1}))=0$. The reason that $\Delta(W_2^{-2})=0$ is as follows: $W_2^{-2}=z^{-1}C$ is an element of degree $-4$, under the additive decomposition, it corresponds to an element $z^{-1}+\lambda w_2^{-2}$ with some $\lambda\in k$, $\widehat{\Delta}_1(z^{-1})=0$, $\widehat{\Delta}_a(w_2^{-2})=0$.

\begin{Rem} In \cite{LZ2015}, our computations for Lie brackets contain some minor errors, which are caused by the same reason:   The equality $\widehat{\Delta}_a(w_1)=-1$  on $\mathrm H^*(\langle a\rangle)$ should correspond to the equality $\Delta(X_1)=-E_2=1-C_1$ in $\HH^*(kS_3)$, but we   used $\Delta(X_1)=-1$  in \cite{LZ2015}.  We list all the corrections in \cite{LZ2015} as follows: 
$$[u,X_1]=u(C_1-1)=-[X_1, u],\quad [v,X_1]=v(1-C_1)=-[X_1, v];$$
$$[C_1,X_1]=C_1(C_1-1)=-[X_1, C_1],\quad [C_2,X_1]=C_2(C_1-1)=-[X_1, C_2].$$
 But the following phenomena in our examples is still true both in Hochschild cohomology and in Tate-Hochschild cohomology: The Lie bracket of any two generators in even degrees vanishes.
\end{Rem}

\bigskip


\appendix

\section{A proof of $B=0$ in $\mathrm H_*(G, k)$}

In this appendix, we denote $kG$ by $A$ and use the unnormalized bar resolution $\Barr_*(A)$ of $A$. Recall that the cyclic bicomplex $\{CC_{p, q}(A)\mid
p, q\geq 0\}$ is the  double complex (cf. \cite[Section 2.1]{Lod}):
$$\xymatrix@R=1.3pc{
\ar[d]^{b} & \ar[d]^{-b'}  & \ar[d]^{b} \\
A^{\otimes 3}\ar[d]^{b} & \ar[l]_{1-t} \ar[d]^{-b'} A^{\otimes 3}  & A^{\otimes 3} \ar[l]_{N} \ar[d]^{b} & \ar[l]_{1-t}\\
A^{\otimes 2}\ar[d]^{b} & \ar[l]_{1-t} \ar[d]^{-b'} A^{\otimes 2}  & A^{\otimes 2} \ar[l]_{N} \ar[d]^{b} & \ar[l]_{1-t}\\
A   & \ar[l]_{1-t}   A  & A \ar[l]_{N}  & \ar[l]_{1-t}\\
}$$
where 
\begin{itemize}
\item $CC_{p, q}(A)=A^{\otimes (q+1)}$, 
\item $b'(a_0\otimes \cdots \otimes a_p)=\sum_{i=0}^{p-1} (-1)^i a_0\otimes \cdots \otimes a_ia_{i+1} \otimes \cdots \otimes a_p,$
\item $b(a_0\otimes \cdots \otimes a_p)=b'(a_0\otimes \cdots \otimes a_p)+(-1)^pa_pa_0\otimes a_1\otimes \cdots \otimes a_{p-1},$
\item $t_p(a_0\otimes \cdots \otimes a_p)=(-1)^{p} a_p\otimes a_0\otimes \cdots \otimes a_{p-1},$
\item $N_p=\sum_{i=0}^{p} t^i$, where $t^i$ denotes the $i$-th power of the map $t$. 
\end{itemize}
It is well known (cf.,  e.g.,  \cite[Section 2.1]{Wei}) that the cyclic homology $\HC_*(A)$ of $A$ is defined as the homology of the total
complex of $CC_{*, *}(A)$. Note that the odd columns of $CC_{*, *}(A)$ are exact, as they are exactly  $\Barr_*(A)[1]$  (the shift $[1]$ of the bar resolution $\Barr_*(A)$),  and the
even columns are the Hochschild chain complex $C_*(A, A)$. We
define, for $p\geq 0$, a map $s: A^{\otimes (p+1)}\rightarrow
A^{\otimes (p+2)}$ by
$$s(a_0\otimes \cdots \otimes a_p)=1\otimes a_0\otimes \cdots \otimes a_p.$$
Now the Connes' $B$-operator is defined
to be $B=(1-t)sN:
A^{\otimes (p+1)}\rightarrow A^{\otimes (p+2)}.$ Since it can be shown
that $B^2=Bb+bB=0$,  $B$ induces a map $\HH_p(A)\rightarrow
\HH_{p+1}(A)$, still denoted by $B$. Note that if we use the
normalized Hochschild chain complex as in the main text of the  present paper, then
$B$ is equal to $sN$ at the complex level. Consider the following
short exact sequence of double complexes:
$$0\rightarrow CC_{<2,  *}(A) \xrightarrow{I} CC_{*, *}(A)  \xrightarrow{S} CC_{*,*}(A)[2]\rightarrow 0$$
where $CC_{<2, *}(A)$ is the double complex formed by the first two
columns of $CC_{*,*}(A)$, $I$ is the natural embedding, $S$ is the
quotient map, and where for the double complex $CC_{*, *}(A)[2]$,
$$(CC_{*, *}(A)[2])_{p, q}=CC_{p-2,q}(A).$$
 By a standard fact (Killing contractible complexes) from homological algebra (see \cite[2.1.6 Lemma]{Lod}), $C_{<2, *}(A)$ has a total complex which is quasi-isomorphic to the Hochschild chain complex $C_*(A, A)$. Thus we get a long exact sequence
$$\cdots \rightarrow \HH_n(A, A)\xrightarrow{I} \HC_n(A) \xrightarrow{S} \HC_{n-2}(A) \xrightarrow{B} \HH_{n-1}(A, A)\rightarrow \cdots.$$
Note that the composition of the maps (still denoted by $B$)
$$\HH_{n}(A, A) \xrightarrow{I} \HC_{n}(A) \xrightarrow{B} \HH_{n+1}(A, A)$$
is exactly the Connes' $B$-operator (cf. \cite[Exercise 9.8.2]{Wei}).

Now we consider another double complex $\{C_{p, q}(G)\mid
p, q\geq 0\}$:
$$\xymatrix@R=1.4pc{
\ar[d]^{\partial} & \ar[d]^{-{\partial}'}  & \ar[d]^{\partial} \\
A^{\otimes 2}\ar[d]^{\partial} & \ar[l]_{1-t'} \ar[d]^{-{\partial}'} A^{\otimes 2}  & A^{\otimes 2} \ar[l]_{N'} \ar[d]^{\partial} & \ar[l]_{1-t'}\\
A\ar[d]^{\partial} & \ar[l]_{1-t'} \ar[d]^{-{\partial}'} A  & A \ar[l]_{N'} \ar[d]^{\partial} & \ar[l]_{1-t'}\\
k   & \ar[l]_{1-t'}   k  & k \ar[l]_{N}  & \ar[l]_{1-t'},\\
}$$
where \begin{itemize}
\item $C_{p, q}(G)=A^{\otimes q}$,
\item  $\partial'(g_1\otimes \cdots \otimes g_n)=g_2\otimes g_{3, n}+\sum_{i=1}^{n-1} (-1)^i g_{1, i-1}\otimes g_ig_{i+1}\otimes g_{i+2, n},$
\item  $\partial(g_1\otimes \cdots \otimes g_n)=\partial'(g_1\otimes \cdots\otimes g_n)+(-1)^ng_{1,n-1},$
 \item $t'_n(g_1\otimes \cdots \otimes g_{n-1})=(-1)^{n-1}
g_{n-1}^{-1}\cdots g_1^{-1}\otimes g_1\otimes \cdots \otimes g_{n-2},$
  \item $N'_n=\sum_{i=1}^{n} t'^i$.\end{itemize}
   The odd columns of ${C}_{*, *}(G)$ are
exact since they are  isomorphic to $(k\otimes_{kG} \mathrm{Bar}_*(kG))[1]$,  the projective resolution (shifted by $[1]$) of the trivial module $k$ as right $A$-modules.
The even columns are obtained by shift $[1]$ on the group homology chain complex $C_*(G,k)$.
 Here we adapt the notation from Karoubi \cite{Kar} and denote by
$\HC_*(G)$ the homology of the total complex of $C_{*, *}(G)$.
Similarly, the double complex $C_{<2,*}(G)$ has a total complex whose
$n$-th homology is naturally isomorphic to the group homology
$\mathrm H_n(G, k)$. Observe that there is a split injection (which is clearly compatible with the additive decomposition map in Theorem \ref{realization-Hochschild-homology}):
$$
\begin{array}{rcl}
\iota: C_{*, *}(G) &\hookrightarrow &CC_{*,*}(A),\\
A^{\otimes n}&\longrightarrow&
A\otimes A^{\otimes n},\\
g_{1,n}& \longmapsto& (g_n^{-1}\cdots g_1^{-1},g_{1,n}),
\end{array}
$$
whose retraction is given by
$$
\begin{array}{rcl}
r: CC_{*, *}(A) &\twoheadrightarrow &C_{*, *}(G),\\
A\otimes A^{\otimes n}&\longrightarrow&
A^{\otimes n},\\
g_{0,n}& \longmapsto & g_{1,n}\mbox{ if } g_0g_1\cdots g_n=1,\\
\mbox{ or }\ g_{0,n}& \longmapsto& 0 \quad \mbox{ otherwise}.
\end{array}
$$
Thus we have the following commutative diagram between long exact sequences:
$$\xymatrix@R=15px{
\cdots \ar[r] & \HH_n(A, A) \ar[r]^-{I} & \HC_n(A) \ar[r]^-{S} & \HC_{n-2}(A) \ar[r]^B & \HH_{n-1}(A, A) \ar[r] & \cdots\\
\cdots \ar[r] & H_n(G, k) \ar[r]^-{I}\ar@{_(->}[u]& \HC_n(G) \ar[r]^-{S}\ar@{_(->}[u] & \HC_{n-2}(G) \ar[r]^B \ar@{_(->}[u]& \mathrm H_{n-1}(G, k) \ar@{_(->}[u]\ar[r] & \cdots\\
}$$ Note that the restriction of the Connes' $B$-operator to $\mathrm H_{*}(G, k)$ is the
composition of maps
$$\mathrm H_{n}(G, k)\xrightarrow{I}\HC_{n}(G) \xrightarrow{B} \mathrm H_{n+1}(G, k).$$ So in order to prove that the restriction of the Connes' $B$-operator vanishes,
it is sufficient to prove that $B: \HC_{n}(G)\rightarrow \mathrm H_{n+1}(G,
k)$ vanishes for $n\geq 0$. The following result is due to Karoubi.
\begin{Thm}[\cite{Kar}]\label{kar}
We have $$\HC_n(G)\simeq \bigoplus_{i\in \Z_{\geq 0}} \mathrm H_{n-2i}(G,
k)$$ and the map $S: \HC_n(G)\rightarrow \HC_{n-2}(G)$ is a projection with kernel
$\mathrm H_n(G, k)$.
\end{Thm}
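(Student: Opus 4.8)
The plan is to compute $\HC_*(G)$ directly from the bicomplex $C_{*,*}(G)$ using the spectral sequence of the filtration by columns. Taking vertical homology first, the odd columns contribute nothing since they are acyclic (shifted projective resolutions of $k$), while each even column is the group homology complex, whose homology is $\mathrm{H}_*(G,k)$. Thus $E^1_{p,q}$ is a copy of $\mathrm{H}_q(G,k)$ for $p$ even and $0$ for $p$ odd; the differential $d^1$ connects an even column to an adjacent odd (hence zero) column, so $d^1=0$ and $E^2=E^1$. The first potentially nonzero differential is $d^2\colon E^2_{p,q}\to E^2_{p-2,q+1}$, and unwinding its standard zig-zag description — apply the horizontal map $N'$ into the acyclic odd column, pull the result back through the explicit contracting homotopy $s$ available there, then apply $1-t'$ — identifies $d^2$ with $(1-t')sN'=B$, Connes' operator on $\mathrm{H}_*(G,k)$.

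Granting $d^2=0$, the spectral sequence collapses at $E^2$; since $k$ is a field the filtration splits, giving $\HC_n(G)\cong\bigoplus_{\substack{p\text{ even},\ p+q=n}}\mathrm{H}_q(G,k)=\bigoplus_{i\geq 0}\mathrm{H}_{n-2i}(G,k)$. The operator $S$ is induced by the shift-by-$[2]$ quotient in the short exact sequence $0\to C_{<2,*}(G)\to C_{*,*}(G)\to C_{*,*}(G)[2]\to 0$; it lowers the column-filtration by $2$, so on the associated graded it carries the summand coming from column $2i$ identically onto the summand coming from column $2i-2$ of $\HC_{n-2}(G)$ for $i\geq 1$ and annihilates the column-$0$ summand, which is $\mathrm{H}_n(G,k)$. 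Equivalently, with $d^2=0$ the SBI long exact sequence $\cdots\to\mathrm{H}_n(G,k)\xrightarrow{I}\HC_n(G)\xrightarrow{S}\HC_{n-2}(G)\xrightarrow{B}\mathrm{H}_{n-1}(G,k)\to\cdots$ breaks into short exact sequences $0\to\mathrm{H}_n(G,k)\xrightarrow{I}\HC_n(G)\xrightarrow{S}\HC_{n-2}(G)\to 0$, so $S$ is surjective with kernel $\mathrm{H}_n(G,k)$, i.e. the asserted projection.

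Everything therefore reduces to the single point $d^2=0$, equivalently $B=0$ on $\mathrm{H}_*(G,k)$; this is where the genuine content and the main obstacle lie. Crucially, it cannot be extracted from the SBI sequence itself — that would be circular, since $B=0$ on $\mathrm{H}_*(G,k)$ is precisely the statement the appendix is ultimately after — so an independent input is required. One clean route is topological: $\HC_*(kG)\cong\mathrm{H}^{S^1}_*(LBG;k)$, the Borel equivariant homology of the free loop space of $BG$, and the identity-conjugacy-class summand $\HC_*(G)$ corresponds to the subspace of constant loops, a copy of $BG$ on which $S^1$ acts trivially; hence this summand is $\mathrm{H}_*(BG\times BS^1;k)\cong\mathrm{H}_*(G,k)\otimes k[u]$ with $|u|=2$, and the $S^1$-action, which is $B$, vanishes on $\mathrm{H}_*(G,k)$. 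The alternative is Karoubi's more hands-on construction of a cyclic homotopy trivializing the cyclic structure on the identity component of the cyclic nerve $N^{\mathrm{cy}}_\bullet(G)$ (whose underlying simplicial set is the nerve of $G$), which at the chain level yields an explicit null-homotopy of $B$ on the normalized bar complex $C_*(G,k)$; this is the ingredient that is known but only implicit in the literature. Once $d^2=0$ is secured by either means, the decomposition of $\HC_n(G)$ and the description of $S$ follow from the bookkeeping above.
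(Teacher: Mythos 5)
Your spectral-sequence bookkeeping is fine up to the point you yourself flag: everything hinges on $d^2=0$, i.e.\ on the vanishing of Connes' operator $B$ on $\mathrm H_*(G,k)$, and that step is never actually proved. The two routes you offer are only gestured at. The topological one requires the identification $\HC_*(kG)\simeq \mathrm H^{S^1}_*(LBG;k)$, its decomposition over conjugacy classes, the identification of the identity-class summand with the constant loops (a copy of $BG$ with trivial $S^1$-action), and the compatibility of all of this with the algebraic $S$, $I$ and $B$ --- none of which is verified; the second route (``Karoubi's cyclic homotopy yields an explicit null-homotopy of $B$'') simply asserts the very fact to be established. In the context of this appendix the omission is fatal: Theorem \ref{kar} is precisely the tool the paper uses to extract $B=0$ on $\mathrm H_*(G,k)$ from the SBI sequence, so reducing the theorem back to that vanishing, without an independently completed argument, leaves the whole appendix circular (or, at best, outsourced to an argument you have not written down).

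The paper sidesteps this trap by running the spectral sequence one level up. One writes $C_{*,*}(G)\simeq k\otimes_{kG}\widetilde C_{*,*}(G)$ for a bicomplex of $kG$-modules whose odd columns are the acyclic complex $\Barr_*(kG)[1]$ and whose even columns are free resolutions of the trivial module $k$ (not complexes merely computing $\mathrm H_*(G,k)$). Hence the first page of the column-filtration spectral sequence for $\widetilde C_{*,*}(G)$ is concentrated in the bottom row, so all higher differentials --- in particular the would-be $d^2$ that becomes $B$ only after applying $k\otimes_{kG}-$ --- vanish for degree reasons, and $\Tot(\widetilde C_{*,*}(G))$ is quasi-isomorphic over $kG$ to the complex $K_*$ with $k$ in even degrees and zero differentials. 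Applying $k\otimes^{\mathbb{L}}_{kG}-$ then yields $\HC_n(G)\simeq\bigoplus_{i\in\Z_{\geq 0}}\mathrm H_{n-2i}(G,k)$ and identifies $S$ with the evident projection killing the column-$0$ summand $\mathrm H_n(G,k)$, with no input about $B$ needed anywhere. To salvage your approach you must either carry out one of your two external arguments in full detail, or replace your over-$k$ spectral sequence by this over-$kG$ one.
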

As a consequence, the long exact sequence in the second row of the above commutative diagram splits as short exact sequences
$$0\rightarrow \mathrm H_n(G, k) \xrightarrow{I} \HC_n(G) \xrightarrow{S} \HC_{n-2}(G) \rightarrow 0.$$ In particular, the map $B: \HC_{n}(G)\rightarrow\mathrm  H_{n+1}(G, k)$ is zero.

For the reader's convenience, we include a proof of Theorem \ref{kar}.
\begin{proof}[Proof of Theorem \ref{kar}]
Note that $C_{*, *}(G)\simeq k\otimes_{kG} \widetilde{C}_{*, *}(G)$, where the double complex $\widetilde{C}_{*, *}(G)$ is defined as follows,
$$\xymatrix@R=1.3pc{
\ar[d]^{\widetilde b} & \ar[d]^{-b'}  & \ar[d]^{\widetilde b} \\
A^{\otimes 3}\ar[d]^{\widetilde b} & \ar[l]_{1-\widetilde{t}} \ar[d]^{-b'} A^{\otimes 3}  & A^{\otimes 3} \ar[l]_{\widetilde{N}} \ar[d]^{\widetilde b} & \ar[l]_{1-\widetilde{t}}\\
A^{\otimes 2}\ar[d]^{\widetilde b} & \ar[l]_{1-\widetilde{t}} \ar[d]^{-b'} A^{\otimes 2}  & A^{\otimes 2} \ar[l]_{\widetilde{N}} \ar[d]^{\widetilde b} & \ar[l]_{1-\widetilde{t}}\\
A   & \ar[l]_{1-\widetilde{t}}   A  & A \ar[l]_{\widetilde{N}}  & \ar[l]_{1-\widetilde{t}}
}$$
where $$\widetilde{t}_n(g_0\otimes \cdots\otimes g_{n-1})=(-1)^{n-1} g_0\cdots g_{n-1}\otimes (g_1\cdots g_{n-1})^{-1}\otimes g_1\otimes \cdots \otimes g_{n-2}$$ and $\widetilde{N}_n=\sum_{i=1}^n \widetilde{t}^i$; the odd columns are $\Barr_*(A)[1]$; the even columns are the complex $\widetilde{\Barr}(A)\otimes_Ak$ with the differential $\widetilde{b}=b'\otimes_A \id_k$, where   $\widetilde{\Barr}(A)$ is the deleted bar resolution.
Thus we have  $$\HC_*(G)=H_*(k\otimes_{kG} \Tot
(\widetilde{C}_{*, *}(G))\simeq H_*(k\otimes_{kG}^{\mathbb{L}} \Tot
(\widetilde{C}_{*, *}(G)).$$
By a spectral sequence argument, we have that  $\Tot
(\widetilde{C}_{*, *}(G))$ is quasi-isomorphic, as complexes of
$kG$-modules, to the following complex
$$K_*: k\leftarrow 0\leftarrow k\leftarrow 0\leftarrow k\leftarrow \cdots.$$
So we have a quasi-isomorphism
$$k\otimes_{kG}^{\mathbb{L}} K_* \simeq k\otimes^{\mathbb{L}}_{kG} \Tot(\widetilde{C}_{*, *}(G)).$$
Taking the following projective resolution $\widetilde{K}_{*, *}$ of $K_*$,
$$\xymatrix@R=1.3pc{
\ar[d]^-{\widetilde{b}} & \ar[d]  & \ar[d]^-{\widetilde{b}} \\
A^{\otimes 3}\ar[d]^-{\widetilde{b}} & \ar[l] 0\ar[d]  & A^{\otimes 3} \ar[l] \ar[d]^{\widetilde{b}} & \ar[l]\\
A^{\otimes 2}\ar[d]^-{\widetilde{b}} & \ar[l]\ar[d]0 & A^{\otimes 2} \ar[l] \ar[d]^{\widetilde{b}} & \ar[l]\\
A          &     \ar[l] 0                      &       \ar[l]A      & \ar[l]
}$$
 we get that
$$\HC_n(G)\simeq H_n(k\otimes_{kG}^{\mathbb{L}}K_*)\simeq H_n(k\otimes_{kG}^{\mathbb{L}}\Tot(\widetilde{K}_{*, *}))\simeq H_n(k\otimes_{kG}\Tot(\widetilde{K}_{*, *})) \simeq \bigoplus_{i\in\Z_{\geq 0}} \mathrm H_{n-2i}(G, k),$$
where the forth isomorphism comes from the fact that $k\otimes_{kG} \widetilde{\Barr}_*(A)\otimes_{kG} k\simeq C_*(G, k)$.  
We have the following commutative diagram,
$$\xymatrix{
\HC_n(G) \ar[r]^-{S}\ar[d]^-{\simeq} & \HC_{n-2}(G)\ar[d]^-{\simeq}\\
\bigoplus_{i\in\Z_{\geq 0}} \mathrm H_{n-2i}(G, k)\ar[r]^-{\widetilde{S}} &
\bigoplus_{i\in\Z_{\geq 0}} \mathrm H_{n-2-2i}(G, k)}$$ where
$\widetilde{S}$ is induced by the identity morphisms from
$\mathrm H_{n-2i}(G, k)$ to $\mathrm H_{n-2i}(G, k)$ for $i>0$. So the kernel of $S$
is $\mathrm H_n(G, k)$.
\end{proof}



\bigskip

\end{document}